\newtheorem{theorem}{Theorem}[section]
\newtheorem{definition}[theorem]{Definition}
\newtheorem{assumption}[theorem]{Assumption}
\newtheorem{remark}{Remark}[section]
\newcommand {\sgn} { {\rm sgn} }
\newcommand{\beq}{\begin{equation}}
\newcommand{\eeq}{\end{equation}}
\newcommand{\beqa}{\begin{eqnarray}}
\newcommand{\eeqa}{\end{eqnarray}}
\numberwithin{equation}{section}
\begin{document}

\title[From individual-based mechanical models to free-boundary problems]{From individual-based mechanical models of multicellular systems to free-boundary problems} 
\author[T.~Lorenzi, P.J.~Murray, M.~Ptashnyk]{{ Tommaso Lorenzi, Philip J. Murray, Mariya Ptashnyk}\medskip\\
  University of St Andrews, UK \\
University of Dundee, UK \\
Heriot-Watt University, Edinburgh,  UK   }  \footnotetext[1]{accepted in {\it  Interfaces and Free Boundaries} \\
 tl47@st-andrews.ac.uk,  p.murray@dundee.ac.uk,   m.ptashnyk@hw.ac.uk
}

\date{}

\maketitle


\begin{abstract}
In this paper we present an individual-based mechanical model that describes the dynamics of two contiguous cell populations with different proliferative and mechanical characteristics. An off-lattice modelling approach is considered whereby: (i) every cell is identified by the position of its centre;  (ii) mechanical interactions between cells are described via generic nonlinear force laws; and (iii) cell proliferation is contact inhibited. We formally show that the continuum counterpart of this discrete model is given by a free-boundary problem for the  cell densities. The results of the derivation demonstrate how the parameters of continuum mechanical models of multicellular systems can be related to biophysical cell properties. We prove an existence result for the free-boundary problem and construct travelling-wave solutions. Numerical simulations are performed in the case where the cellular interaction forces are described by the celebrated Johnson-Kendall-Roberts model of elastic contact, which has been previously used to model cell-cell interactions. The results obtained indicate excellent agreement between the simulation results for the individual-based model, the numerical solutions of the corresponding free-boundary problem and the travelling-wave analysis.


\end{abstract}

\section{Introduction}
Continuum mechanical models of multicellular systems have been increasingly used to achieve a deeper understanding of the underpinnings of tissue development, wound healing and tumour growth~\cite{ambrosi2002mechanics, ambrosi2002closure, araujo2004history, bresch2010computational, byrne1995growth, byrne2003modelling, chen2001influence, ciarletta2011radial, greenspan1976growth, lowengrub2009nonlinear, perthame2014some, preziosi2003cancer, sherratt2001new, ward1999mathematical, ward1997mathematical}. These models are formulated in terms of nonlinear partial differential equations for  cell densities (or cell volume fractions) and, as such, are amenable to both numerical and analytical approaches that enable insight to be gained into the biological system under study. From a mathematical perspective, over the past few years particular attention has been paid to the existence of travelling-wave solutions with composite shapes~\cite{bertsch2013modeling, chaplain2019bridging, lorenzi2017interfaces, perthame2014traveling, tang2014composite} and to the convergence to free-boundary problems in the asymptotic limit whereby cells are represented as an incompressible fluid~\cite{bubba2019hele, kim2016free, mellet2017hele, perthame2014derivation, perthame2014hele}. 

Whilst continuum mechanical models of multicellular systems are usually defined on the basis of tissue-scale phenomenological considerations, off-lattice individual-based models enable representation of cell mechanics at the level of individual cells~\cite{drasdo2005coarse,van2015simulating}. However, as the numerical exploration of such individual-based models requires large computational times for biologically relevant cell numbers and the models are not analytically tractable, it is desirable to derive continuum models in an appropriate limit~\cite{baker2018free, byrne2009individual, deroulers2009modeling,drasdo2005coarse, fozard2009continuum, johnston2012mean,johnston2015modelling, middleton2014continuum, motsch2018short, Murray, murray2009discrete,simpson2007simulating}. Although mechanical interactions between interfacing cell populations with different characteristics arise in many biological contexts (\emph{e.g.} tumour growth, development), relatively little prior work has explored the connection between off-lattice individual-based models and continuum models in such situations. 

In this paper we propose an individual-based mechanical model for the dynamics of two contiguous cell populations with different proliferative and mechanical characteristics. In our model: (i) every cell is identified by the position of its centre; (ii) mechanical interactions between cells are described via generic nonlinear force laws; and (iii) cell proliferation is contact inhibited.  Formally deriving a continuum counterpart of the discrete model, we obtain a free-boundary problem with nonstandard transmission conditions that governs the dynamics of the  cell densities. Our derivation extends a previous method developed for the case of a single cell population \cite{Murray,murray2009discrete}. 

To prove an existence result for the free-boundary problem, a novel extension of methods previously developed for related free-boundary problems~\cite{DuLin2010, Evans_1, Evans_2,  Visintin}  is required due to the specific structure of our boundary and transmission conditions. In particular, the jump in the density and in the flux across the moving interface between the two cell populations, along with the fact that there are no conditions of Dirichlet type, prevent us from using existing ideas which are partially based on continuity arguments~\cite{Evans_1, Evans_2} and from applying the enthalpy method~\cite{Crowley, Visintin}. 

Moreover, building on a recently presented method  for a related system of nonlinear partial differential equations~\cite{chaplain2019bridging,lorenzi2017interfaces}, we also construct travelling-wave solutions for the free-boundary problem. In this respect, the novelty of our work lies in the fact that we consider a free-boundary problem subject to biomechanical transmission conditions which are different from those considered in~\cite{chaplain2019bridging,lorenzi2017interfaces}.   This  requires a different approach when studying the properties of the solution at the interface between the two cell populations and introduces a significant difference in the qualitative properties of the travelling wave.

Numerical simulations are performed in the case where the cellular interaction forces are described by the celebrated Johnson-Kendall-Roberts (JKR) model of elastic contact~\cite{johnson1971surface}, which has been shown to be experimentally accurate in some cases~\cite{chu2005johnson}, and has been previously used to approximate mechanical interactions between cells~\cite{drasdo2005single, drasdo2012modeling}. The results obtained support the findings of the travelling-wave analysis, and demonstrate excellent agreement between the individual-based model and the corresponding free-boundary problem.

The  paper is organised as follows. In Section~\ref{DiscreteToContinuum}, we present our individual-based mechanical model and formally derive the corresponding free-boundary problem. In Section~\ref{well-posedness}, we prove the existence result for the free-boundary problem. In Section~\ref{travelling-waves}, we develop the travelling-wave analysis. In Section~\ref{numerics}, we compare  simulation results for the individual-based model and numerical solutions of the free-boundary problem. Section~\ref{sec:discussion} concludes the paper and provides a brief overview of possible research perspectives.

\section{Formulation of the individual-based model and derivation of the corresponding free-boundary problem}
\label{DiscreteToContinuum}
\subsection{Formulation of the individual-based model}
We consider a one-dimensional multicellular system that consists of two populations of cells that are arranged along the real line $\mathbb{R}$ and characterised by different proliferative and mechanical properties. We label the two cell populations by the letters $A$ and $B$ and make the assumption that, during the considered time interval, the cells in population $A$ can proliferate, whereas the cells in population $B$ cannot. We denote the number of cells in population $B$ by $M>0$. Moreover, at  time  $\tau \geq 0$ we let the function $m(\tau)$ represent the number of cells in population $A$ and compute the total number of cells inside the system as $n(\tau)=m(\tau) + M$.

We adopt a discrete off-lattice modelling approach whereby every cell is identified by the position of its centre~\cite{van2015simulating}. Building upon the ideas presented in~\cite{Murray,murray2009discrete}, we model the two cell populations as a chain of masses and springs with the masses corresponding to the cell centres, and assume the cell order to be fixed. We label each cell by an index $i=1, \ldots, n(\tau)$ and describe the position of the $i^{th}$ cell's centre at time $\tau$ by means of the function $r_i(\tau)$. Without loss of generality, we let the cells of population $A$ be on the left of the cells of population $B$. 

We assume that the centre of the first cell of population $A$ 
is pinned at a point $s_0 \in \mathbb{R}$, i.e. 
\beq
\label{ass:firstcellpinned}
r_1(\tau) = s_0, \quad \text{for all } \; \tau \geq 0.
\eeq
We describe the effect of cell proliferation and mechanical interactions between cells on the dynamics of the multicellular system using the modelling strategies and the assumptions described hereafter.

\paragraph{Mathematical modelling of cell proliferation.} We assume that cell proliferation is contact dependent such that the proliferation rate $g$ of the $j^{th}$ cell in population $A$ depends on the position of neighbouring cells, \emph{i.e.} $g \equiv g\left(r_{j}(\tau) - r_{j-1}(\tau)\right)$ with $j = 2, \ldots, m(\tau) -1$. Hence in a time interval $\Delta \tau$ the proliferation of cells in population $A$ will result in a reindexing $\Delta i$ of the $i^{th}$ cell by an amount
\begin{align} \label{Deltaii}
(\Delta i)_i = \left \lceil \sum_{j=2}^i g(r_j-r_{j-1}) \, \Delta \tau \right \rceil \; \text{ for } \; i=2, \ldots, m -1
\end{align}
such that
\begin{align}
r_i(\tau+\Delta \tau) = r_{i-(\Delta i)_i}(\tau) \quad  \text{ for } \, i=2, \ldots, m-1,
\label{DiscreteModelReindexingEqn}
\end{align}
where $m\equiv m(\tau)$.
\paragraph{Mathematical modelling of mechanical interactions between cells.} We make the assumption that mechanical interactions between nearest neighbour cells depend on the distance between their centres. We denote the force exerted on the $i^{th}$ cell of population $l$ by its left and right neighbours by $F_{l}(r_{i}-r_{i-1})$ and $F_{l}(r_{i+1}-r_{i})$, respectively, and introduce the parameter $\eta_l >0$ to model the damping coefficient of cells in population $l$, where $l=A,B$. With this notation and neglecting cell-cell friction, the dynamics of the positions of the cell centres are described via the following system of differential equations
\begin{equation}\label{discrete_11}
\begin{aligned}
\frac{dr_i}{d\tau}  &=  \frac{1}{\eta_A} \, \big(F_A(r_i-r_{i-1}) -  F_A(r_{i+1} - r_i)\big), \,  \quad  i= 2, \ldots, m-1,  \\
\frac{dr_i}{d\tau}  &= \frac{1}{\eta_B} \, \big( F_B(r_i-r_{i-1}) -  F_B(r_{i+1} - r_i)\big),  \,   \quad  i= m+1, \ldots, n-1,
\end{aligned}
 \end{equation}
where $m\equiv m(\tau)$ and $n\equiv n(\tau)$. We complete   system  \eqref{discrete_11} with the following differential equations
\beqa\label{boundary_c_11}
\frac{d r_1}{d\tau} &=& 0, \nonumber \\
\frac{dr_m}{d\tau} &=&  \frac 1 {\eta_A} F_A(r_m-r_{m-1}) -  \frac1 {\eta_B} F_B(r_{m+1} - r_m),  \\ 
\frac{d r_n}{d\tau} &=&  \frac 1 { \eta_B} F_B(r_{n} - r_{n-1}). \nonumber 
\eeqa

\subsection{Derivation of the corresponding free-boundary problem}
In order to formally derive a continuum version of our individual-based mechanical model \eqref{discrete_11} and \eqref{boundary_c_11}, considering the scenario where the number of cells in both populations is large, we introduce the continuous variable $y \in \mathbb{R}$ so that, for some $\delta>0$ sufficiently small,  
$$
r_i(\tau)=r(\tau, y_i) \quad \text{with} \quad y_i=i \, \delta,
$$
and 
$$
r_{i\pm1}(\tau)=r(\tau, y_{i\pm1})=r(\tau, y_{i}\pm\delta), \quad  
r_{i-(\Delta i)_i}(\tau) = r(\tau, y_{i-(\Delta i)_i})=r(\tau, y_{i} - (\Delta i)_i \, \delta).
$$
Moreover, we use the notation
\beq
\label{eq:nots1s2}
r(\tau, y_1) = s_0, \quad r(\tau, y_m) = s_1(\tau), \quad r(\tau, y_n) = s_2(\tau), \quad \text{for  } \; \tau>0.
\eeq
We assume the function $r(\tau,y)$ to be continuously differentiable with respect to the variable $\tau$ and twice continuously differentiable with respect to the variable $y$.  Under these assumptions, letting $\Delta \tau$ and $\delta$ be sufficiently small, and using the Taylor expansions
$$
\begin{aligned}
& r(\tau + \Delta \tau,y_i) = r(\tau,y_i) + \frac{ \partial r(\tau,y_i)}{\partial \tau} \, \Delta \tau  + o(\Delta \tau), 
\\
& r(\tau, y_{i} - (\Delta i)_i \, \delta) = r(\tau,y_i) -\frac{ \partial r(\tau,y_i)}{\partial y} \,  (\Delta i)_i \, \delta + o(\delta),
\end{aligned} 
$$
along with the approximation
$$
\left \lceil \sum_{j=2}^i g(r_j-r_{j-1}) \, \Delta \tau \right \rceil \approx \int_{y_1}^{y_i} g\left(\frac{ \partial r(\tau,y')}{\partial y'} \, \delta \right) dy' \, \frac{\Delta \tau}{\delta},
$$
from~\eqref{DiscreteModelReindexingEqn} we obtain
\begin{align}
\frac{\partial r(\tau,y_{i})}{\partial \tau}  \approx- \left(\int_{y_1}^{y_i} g\left(\frac{ \partial r(\tau,y')}{\partial y'} \, \delta \right) dy'\right) \, \frac{ \partial r(\tau,y_{i})}{\partial y} \quad \text{for } i=2,\ldots, m -1. \label{e.approxdiv}
\end{align}
Moreover, using the Taylor expansions 
$$
\begin{aligned}
& r(\tau,y_i + \delta) = r(\tau,y_i) +\frac{ \partial r(\tau,y_i)}{\partial y} \, \delta  + \frac 12 \frac{\partial^2 r(\tau, y_i)}{\partial y^2} \, \delta^2 + o(\delta^2), 
\\
& r(\tau,y_i - \delta) = r(\tau,y_i) -\frac{ \partial r(\tau,y_i)}{\partial y} \, \delta  + \frac 12 \frac{\partial^2 r(\tau,y_i)}{\partial y^2} \, \delta^2+ o(\delta^2), 
\end{aligned} 
$$
and making the additional assumption that the functions $F_A$ and $F_B$ are twice continuously differentiable, we approximate the force terms in~\eqref{discrete_11} for $i = 2, \ldots, n-1$ as
\beq
\label{e.approxmech}
F_l(r_i-r_{i-1}) -  F_l(r_{i+1} - r_i) \approx - F^\prime_l\left(\frac{ \partial r}{\partial y} \delta \right) \frac{\partial^2 r}{\partial y^2} \delta^2, \quad l=A,B.
\eeq
Using the approximations  \eqref{e.approxdiv} and \eqref{e.approxmech}, and combining proliferation~\eqref{DiscreteModelReindexingEqn} and mechanical inte\-raction~\eqref{discrete_11} processes, we obtain 
\beq\label{eq:r_A}
 \frac{\partial r}{\partial \tau} = -  \frac{1}{\eta_A} \, F^\prime_A\left(\frac{ \partial r}{\partial y} \, \delta \right) \frac{\partial^2 r}{\partial y^2} \, \delta^2  -  \left(\int_{y_1}^{y} g\left(\frac{ \partial r}{\partial y'} \, \delta \right) dy'\right) \frac{ \partial r}{\partial y} \, \; \text{ for } \; y \in (y_1,y_m),
\eeq
and
\beq\label{eq:r_B}
 \frac{\partial r}{\partial \tau} = -  \frac{1}{\eta_B} \, F^\prime_B\left(\frac{ \partial r}{\partial y} \, \delta \right) \frac{\partial^2 r}{\partial y^2} \, \delta^2 \, \; \text{ for } \; y \in (y_m,y_n).
\eeq
Similarly, mechanical interaction processes~\eqref{boundary_c_11} for $i=1$, $i=m$ and $i=n$ yield, respectively,
\begin{equation}
\label{boundary_c_1}
\begin{aligned}
  \frac{\partial r }{\partial \tau}  &= 0 \hspace{7.9cm}  \text{ at } y = y_1,\\ 
  \frac{\partial r }{\partial \tau}  &=  \frac 1 {\eta_A}\left[ F_A\left(\frac{\partial r}{\partial y} \, \delta \right)  - \frac 12 F^\prime_A\left(\frac{ \partial r}{\partial y} \, \delta\right) \frac{\partial^2 r}{\partial y^2} \, \delta^2 \right]  \\
  & \quad - \frac 1 { \eta_B} \left[F_B\Big(\frac{\partial r}{\partial y} \, \delta  \Big)  +  \frac 12   F^\prime_B\left(\frac{ \partial r}{\partial y} \, \delta   \right) \frac{\partial^2 r}{\partial y^2} \, \delta^2  \right] \;\;\quad \text{ at } \; y = y_m,
\end{aligned}
\end{equation}
and
\begin{equation}\label{boundary_c_2}
\frac{\partial r}{\partial \tau} = \frac 1 {\eta_B} \left[F_B\left(\frac{\partial r}{\partial y} \, \delta \right)    -  \frac 12 F^\prime_B\left(\frac{ \partial r}{\partial y} \, \delta \right) \frac{\partial^2 r}{\partial y^2} \, \delta^2 \right] \; \quad \; \; \text{ at } \; y = y_n.
\end{equation}
Based on the ideas presented in~\cite{Murray,murray2009discrete} and according to the considerations given in Remark~\ref{remceldens}, we define the cell number densities of populations $A$ and $B$ as
\beq\label{eq:approxcelldensity}
\rho_A(\tau,y) = \left(\frac{\partial r}{\partial y} \, \delta\right)^{-1} \mbox{for } \; y \in [y_1,y_m], \quad \rho_B(\tau,y) = \left(\frac{\partial r}{\partial y} \, \delta\right)^{-1} \mbox{for } \; y \in [y_m,y_n].
\eeq
\begin{remark}
\label{remceldens}
{\it 
The definitions of the cell densities given by~\eqref{eq:approxcelldensity} are based on the observation that, at any time  $\tau$, the quotient of the number of cells in a generic interval $[r_i,r_j]$, with $j>i$, and the length of the interval is 
$$
\frac{j-i}{r_j(\tau)-r_i(\tau)} = \frac{j-i}{r(\tau,y_j)-r(\tau,y_i)}.
$$
From the above relation, choosing $j=i+1$ and using the fact that $\delta$ is small, we obtain the following approximate expression for the cell density
\beq\label{eq:approxcelldensity1}
\displaystyle{\rho_l(\tau,y_i) = \frac{1}{r_{i+1}(\tau)-r_i(\tau)} } \approx \frac{1}{\delta} \frac{1}{\frac{\partial r(\tau, y_i)}{\partial y}}.
\eeq
}
\end{remark}
The change of coordinates $(\tau,y) \mapsto (t,r)$, with $t=\tau$ and $r=r(\tau,y)$ yields~\cite{Murray}
$$
\frac{\partial r}{\partial \tau} = - \frac{\partial r}{\partial y}\frac{\partial y}{\partial t} = - \frac 1 \delta \frac 1 \rho_l \frac{\partial y}{\partial t}.
$$
Substituting this relation along with the expressions 
$$
\begin{aligned} 
&\frac{\partial r}{\partial y} = \frac{1}{\delta}  \frac{1}{\rho_l}, \qquad \qquad  \frac{\partial^2 r}{\partial y^2} = \frac{1} \delta \, \frac{\partial}{\partial r} \left(\frac 1 \rho_l\right) \,  \frac{ \partial r}{\partial y} = - \frac 1 {\delta^2}  \frac 1 {\rho_l^3} \frac{\partial \rho_l}{\partial r},
\\
& \left(\int_{y_1}^{y} g\left(\frac{ \partial r}{\partial y'} \delta \, \right) dy'\right) \frac{ \partial r}{\partial y} \ = \left(\int_{s_0}^{r} g\left(1/\rho_A \right) \, \rho_A \, dr'\right) \frac{1}{\rho_A}
\end{aligned}
$$
into equations \eqref{eq:r_A} and \eqref{eq:r_B} yields
\begin{equation}\label{eq:r_Ainter}
\frac 1 \delta \frac{1}{\rho_A} \frac{\partial y}{\partial t} = - \frac{ F_A^\prime(1/\rho_A) }{\eta_A \, \rho_A^3} \, \frac{\partial \rho_A}{\partial r} +  \, \int_{s_0}^{r} g\left(1/\rho_A \right) \, \rho_A \, dr' \frac{1}{\rho_A}
\end{equation}
and
\begin{equation}\label{eq:r_Binter}
\frac 1 \delta \frac{1}{\rho_B} \frac{\partial y}{\partial t} = - \frac{ F_B^\prime(1/\rho_B) }{\eta_B \, \rho_B^3} \, \frac{\partial \rho_B}{\partial r} +  \, \int_{s_0}^{s_1} g\left(1/\rho_A \right) \, \rho_A \, dr' \frac{1}{\rho_B}.
\end{equation}
Multiplying equations \eqref{eq:r_Ainter} and \eqref{eq:r_Binter} by $\rho_A$ and $\rho_B$, respectively, differentiating with respect to $r$, using the fact that
$$
\frac{\partial}{\partial r} \left(\frac 1 \delta   \frac{\partial y}{\partial t}\right) = \frac{\partial}{\partial t} \left(\frac 1 \delta   \frac{\partial y}{\partial r}\right) = \frac{\partial}{\partial t} \left(\frac{\partial r}{\partial y} \, \delta \right)^{-1} =  \frac{\partial \rho_l}{\partial t},
$$
$$
 \, \frac{\partial}{\partial r} \int_{s_0}^{r} g\left(1/\rho_A \right) \, \rho_A \, dr' = G\left(\rho_A \right) \rho_A
\quad \text{ and  }
\quad 
 \, \frac{\partial}{\partial r} \int_{s_0}^{s_1} g\left(1/\rho_A \right) \, \rho_A \, dr' = 0,
$$
with the growth rate $G$ of population $A$  defined as
\beq
\label{eq:defG}
G\left(\rho_A \right) =  \, g(1/\rho_A),
\eeq
and renaming $r$ to $x$, we obtain the following  equations for the cell densities $\rho_A(t,x)$ and $\rho_B(t,x)$
\begin{eqnarray}
 \partial_t \rho_A &=& \partial_x (D_A(\rho_A) \,  \partial_x \rho_A) + G(\rho_A) \rho_A \quad    \text{ for } x \in  (s_0, s_1(t)),  \;\;\;\;\; t>0,  \label{eq:contin_110}
\\
 \partial_t \rho_B &=& \partial_x (D_B(\rho_B) \, \partial_x \rho_B) \quad   \hspace{2 cm }  \text{ for } x \in (s_1(t), s_2(t)),  \; t>0,   \label{eq:contin_11}
\end{eqnarray}
where 
\begin{equation}\label{eq:Dl}
D_l(\rho_l) = -\frac{ F_l^\prime(1/\rho_l) }{\eta_l \, \rho_l^2} \; \text{ for } \; l = A, B.
\end{equation}
Similarly, the evolution equations for the positions of the free boundaries $s_1(t)$ and  $s_2(t)$ are obtained from equations \eqref{boundary_c_1} and \eqref{boundary_c_2}, respectively, yielding
\begin{equation}\label{boundary_c_31}
\begin{aligned}
\frac{ds_1}{dt} &= \frac 1 {\eta_A} F_A\Big(\frac 1{\rho_A}\Big) -\frac 1 {\eta_B}  F_B\Big(\frac 1{\rho_B} \Big) - \frac 12 \Big[\frac {D_A(\rho_A)}{\rho_A} \, \partial_x \rho_A  + \frac{D_B(\rho_B)}{\rho_B} \, \partial_x \rho_B \Big]  &&\text{at } \; x = s_1(t),  \\
\frac{ds_2}{dt} & =   \frac 1 {\eta_B} F_B\Big(\frac 1{\rho_B}\Big) - \frac 12 \frac {D_B(\rho_B)}{\rho_B} \,\partial_x \rho_B\;  && \text{at } \; x = s_2(t).
\end{aligned}
\end{equation}
In order to obtain the boundary conditions (\emph{i.e.} the conditions at $s_0$ and $s_2(t)$) and the transmission conditions (\emph{i.e.} the conditions at $s_1(t)$), that are needed to complete the problem, we consider the mass balance equations
$$
\begin{aligned} 
& \int_{s_0}^{s_1(t)} G(\rho_A) \rho_A \,dx  = \frac{d}{dt} \Big(\int_{s_0}^{s_1(t)} \rho_A \, dx + \int_{s_1(t)}^{s_2(t)} \rho_B \,dx \Big), 
\\
& \int_{s_0}^{s_1(t)} G(\rho_A) \rho_A \, dx = \frac{d}{dt} \int_{s_0}^{s_1(t)} \rho_A \, dx. 
\end{aligned} 
$$
Using the fact that $\displaystyle{\frac{d s_0}{dt}=0}$, together  with equations \eqref{eq:contin_110} and \eqref{eq:contin_11}, yields 
\begin{eqnarray*} 
\begin{aligned} 
\int_{s_0}^{s_1(t)} \hspace{-0.2 cm }  G(\rho_A) \rho_A dx  &= \int_{s_0}^{s_1(t)} \hspace{-0.1 cm }  \partial_t \rho_A \, dx + \rho_A(t,s_1) \frac{d s_1}{dt} - \rho_A(t,s_0) \frac{d s_0}{dt} \\
& \quad + \int_{s_1(t)}^{s_2(t)} \hspace{-0.1 cm }  \partial_t \rho_B  dx + \rho_B(t,s_2) \frac{d s_2}{dt} - \rho_B(t,s_1) \frac{d s_1}{dt} \\
&= \int_{s_0}^{s_1(t)} \hspace{-0.2 cm } G(\rho_A) \rho_A  dx + D_A(\rho_A)  \partial_x \rho_A \big|_{x=s_1} -  D_A(\rho_A)  \partial_x \rho_A \big|_{x=s_0} +  \rho_A(t,s_1) \frac{d s_1}{dt}  \\
&\quad +  D_B(\rho_B) \, \partial_x \rho_B \big|_{x=s_2} -  D_B(\rho_B)  \partial_x \rho_B \big|_{x=s_1} + \rho_B(t,s_2) \frac{d s_2}{dt} - \rho_B(t,s_1) \frac{d s_1}{dt}
\end{aligned} 
\end{eqnarray*}
and 
\begin{eqnarray*} 
\begin{aligned} 
\int_{s_0}^{s_1(t)} \hspace{-0.2 cm } G(\rho_A) \rho_A  dx &=  \int_{s_0}^{s_1(t)} \partial_t \rho_A  dx + \rho_A(t,s_1) \frac{d s_1}{dt} - \rho_A(t,s_0) \frac{d s_0}{dt}  = \rho_A(t,s_1) \frac{d s_1}{dt} \\
& \quad + \int_{s_0}^{s_1(t)} \hspace{-0.1 cm } G(\rho_A) \rho_A  dx + D_A(\rho_A) \, \partial_x \rho_A \big|_{x=s_1} - D_A(\rho_A) \, \partial_x \rho_A \big|_{x=s_0}.
\end{aligned} 
\end{eqnarray*}
Hence
\begin{eqnarray*} 
 0 &=& \frac{d s_1}{dt} \left(\rho_A(t,s_1) - \rho_B(t,s_1) \right) + \left(D_A(\rho_A) \, \partial_x \rho_A - D_B(\rho_B) \, \partial_x \rho_B \right)\big|_{x=s_1}   \\
 &&  + \frac{d s_2}{dt} \rho_B(t,s_2) +  D_B(\rho_B) \, \partial_x \rho_B \big|_{x=s_2}  - D_A(\rho_A) \, \partial_x \rho_A \big|_{x=s_0}
\end{eqnarray*}
and 
\begin{eqnarray*} 
0 &=&  \frac{d s_1}{dt} \rho_A(t,s_1) +  D_A(\rho_A) \, \partial_x \rho_A \big|_{x=s_1} - D_A(\rho_A) \, \partial_x \rho_A \big|_{x=s_0}.
\end{eqnarray*}
The above equations along with  equations \eqref{boundary_c_31} give
\begin{equation}\label{eq:two_popul_model12}
\begin{aligned}
&  D_A(\rho_A) \partial_x \rho_A  =0  && \text{ at } x= s_0, \\
& \frac 1 { \eta_A} F_A\Big(\frac 1{\rho_A}\Big) =   \frac 1 { \eta_B} F_B \Big(\frac 1{\rho_B}\Big)  \quad  && \text{ at }x=s_1(t), \\
& \frac{ds_1}{dt}  \rho_A  =- D_A(\rho_A) \partial_x \rho_A  && \text{ at } x=s_1(t),  \\
& \frac{ds_1}{dt} ( \rho_A - \rho_B) =-\left( D_A(\rho_A) \partial_x \rho_A - D_B(\rho_B) \partial_x \rho_B\right)  && \text{ at } x=s_1(t),  \\
&\frac{ds_2}{dt}  = \frac 1 {\eta_B} F_B\left(\frac 1{\rho_B}\right)  - \frac 12 \frac 1{ \rho_B} D_B(\rho_B) \partial_x \rho_B \quad  && \text{ at }   x=s_2(t), \\
& \frac{ds_2}{dt} \rho_B =   -D_B(\rho_B) \partial_x \rho_B   && \text{ at } x=s_2(t).
\end{aligned}
\end{equation}
%
We complement the free-boundary problem  \eqref{eq:contin_110}, \eqref{eq:contin_11} and \eqref{eq:two_popul_model12} with the following initial conditions for the moving boundaries $s_{1}(t)$ and $s_{2}(t)$, and the cell densities $\rho_{A}(t,x)$ and $\rho_{B}(t,x)$:
\begin{equation}\label{initial_c}
\begin{aligned} 
& s_1(0) = s_1^\ast, \quad  &&\;  s_2(0) = s_2^\ast, \\
& \rho_A(0,x) = \rho_A^0(x)  \quad && \text{ for } \; x\in (s_0, s_1^\ast), \quad\\
&  \rho_B(0,x) = \rho_B^0(x)  \quad && \text{ for } \; x\in (s_1^\ast, s_2^\ast), 
\end{aligned} 
\end{equation} 
where $s_0 < s_1^\ast< s_2^\ast$.  Letting $\rho^{{\rm eq}}_l>0$ denote the equilibrium cell density (\emph{i.e.} the density below which intercellular forces are zero) and $\rho^{\rm M}> \rho^{{\rm eq}}_A$ a critical cell density above which cells stop dividing due to contact inhibition,  throughout the rest of the paper we will make the following assumptions:
\beq
\label{ass:initial_c}
\rho_{A}^0(x) \geq \rho^{{\rm eq}}_A \; \text{ for all } x \in (s_0, s_1^\ast), \quad \rho_{B}^0(x) \geq \rho^{{\rm eq}}_B  \; \text{ for all } x \in (s_1^\ast, s_2^\ast),
\eeq
\beq
\label{e.FBo}
F_l(1/\rho_l) = 0, \; F_l^\prime(1/\rho_l) = 0 \; \text{ for } \; \rho_l \leq  \rho^{\rm eq}_l, \quad  F_l(1/\rho_l) > 0, \; F^\prime_l(1/\rho_l) < 0 \; \text{ for } \; \rho_l >  \rho^{\rm eq}_l, 
\eeq
where  $ l=A,B$,   and
\beq
\label{e.gBo}
 g(\cdot) > 0   \; \;   \text{ in } \; (1/\rho^{\rm M}, \infty), \; \;   g(\cdot) = 0 \; \text{ in } \; (0, 1/  \rho^{\rm M}] , \; \;  g'(\cdot) < 0  \; \;  \text{ in } \; [1/ \rho^{\rm M}, \infty).
\eeq
Assumptions \eqref{e.FBo} and \eqref{e.gBo}, together with notations \eqref{eq:defG}  and   \eqref{eq:Dl},  imply that the nonlinear diffusion coefficient $D_l(\rho_l)$ and the growth rate $G(\rho_A)$ are such that
\beq
\label{e.DFBo}
D_l(\rho_l) = 0 \; \text{ for } \; \rho_l \leq  \rho^{\rm eq}_l, \qquad D_l(\rho_l) > 0 \; \text{ for } \; \rho_l > \rho^{\rm eq}_l, \qquad l=A,B,
\eeq
and
\beq
\label{e.GFBo}
 G(\cdot) > 0   \; \;   \text{ in } \; \; (0,  \rho^{\rm M}), \; \;  
G(\cdot) = 0 \; \;   \text{ in } \;   [\rho^{\rm M}, \infty) , \;   \;  G'(\cdot) < 0 \; \;  \text{ in  } \;  \; (0,  \rho^{\rm M}].  
\eeq

\section{An existence result for the free-boundary problem}  
\label{well-posedness}
Due to the specific structure of our boundary and transmission conditions, the existing well-posedness results for one-dimensional free-boundary problems, such as those presented in~\cite{DuLin2010, Evans_1, Evans_2, Visintin},  are not directly applicable to our problem. Therefore, in this section we prove an existence result for the free-boundary problem \eqref{eq:contin_110}, \eqref{eq:contin_11}, \eqref{eq:two_popul_model12} and \eqref{initial_c}. 

\begin{assumption} \label{assum_1}
We make the following assumptions on the force terms $F_A$ and $F_B$, the diffusion coefficients $D_A$ and $D_B$, the growth rate $G$, and the initial conditions $\rho_A^0$ and $\rho_B^0$.
\begin{itemize} 
\item[(i)] The force terms $F_l \in H^1(0, \infty) \cap C^3(\rho_l^{\rm eq}, \infty)$, with $l=A,B$,  and satisfy~\eqref{e.FBo}.
\item[(ii)] The diffusion coefficients $D_l \in C^2(\rho_l^{\rm eq}, \infty)\cap L^\infty(0, R)$, for any $R>0$, with $l=A,B$,  are defined by~\eqref{eq:Dl}, satisfy  assumptions~\eqref{e.DFBo},  and $D_l(\xi) \ge d_l >0$ for $\xi >\rho_l^{\rm eq}$.
\item[(iii)]  The growth rate $G\in C^3(\mathbb R)$  and satisfies  assumptions~\eqref{e.GFBo}.
\item[(iv)] The initial conditions $\rho_A^0, \rho_B^0 \in C^3_0(\mathbb R)$ and satisfy  assumptions~\eqref{ass:initial_c}.
\end{itemize}  
\end{assumption} 

Throughout this section we use the notation 
$$
\Omega_A(t) = (s_0, s_1(t)) \quad \text{and} \quad \Omega_B(t) = (s_1(t), s_2(t)),
$$
 for $t \in [0,T]$, with $T>0$, and consider solutions in the sense specified by the following definition.

\begin{definition} \label{def:weak_sol1}
A solution of the free-boundary problem~\eqref{eq:contin_110}, \eqref{eq:contin_11}, \eqref{eq:two_popul_model12}, \eqref{initial_c} is given by functions  $s_1, s_2\in W^{1,3}(0,T)$ and $\rho_{l} \in H^1(0, T; L^2(\Omega_l(t)))\cap L^2(0, T; H^2(\Omega_l(t)))$, with  $\rho_l \geq \rho^{{\rm eq}}_l$ and $\rho_l \in L^\infty(0, T; L^\infty(\Omega_l(t)))$ for $l=A,B$, that satisfy equations \eqref{eq:contin_110} and \eqref{eq:contin_11}, the following boundary and transmission conditions 
\begin{equation}\label{transmis_1}
\begin{aligned}
&\partial_x  F_A\Big(\frac 1{\rho_A}\Big) = 0  && \text{ at }  x=s_0, \\
&\frac 1 {\eta_A} F_A\Big(\frac 1{\rho_A}\Big)   =  \frac 1 {\eta_B} F_B\Big(\frac 1{\rho_B} \Big) && \text{ at }  x=s_1(t), \\
&\frac {\partial_x F_A\big(1/{\rho_A}\big)}{\eta_A\rho_A}   =  \frac {\partial_x  F_B\big(1/{\rho_B} \big)}{\eta_B\rho_B} && \text{ at }  x=s_1(t), \\
& \frac {\partial_x  F_B\big(1/{\rho_B} \big)}{\eta_B\rho_B} = -\frac 2 {\eta_B} F_B\Big(\frac 1{\rho_B} \Big) && \text{ at }  x=s_2(t), 
\end{aligned} 
\end{equation}
the  equations for the free boundaries 
\begin{equation}\label{weak_sol_free_bp_22}
\begin{aligned}
& \frac{d s_1}{dt}  = -  \frac {\partial_x F_A\big(1/{\rho_A}\big)}{\eta_A\rho_A}&& \text{ at }  x=s_1(t), \\
 &  \frac{d s_2}{dt}  = \frac 2 {\eta_B} F_B\Big(\frac 1{\rho_B} \Big)  && \text{ at } x=s_2(t),
\end{aligned} 
\end{equation}
and the initial conditions~\eqref{initial_c}.   
\end{definition} 


\begin{theorem}\label{th_existence}
Under Assumptions~\ref{assum_1}   there exists a  solution of the free-boundary problem~\eqref{eq:contin_110}, \eqref{eq:contin_11}, \eqref{eq:two_popul_model12} and \eqref{initial_c}.  
\end{theorem}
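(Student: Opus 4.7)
The plan is to pull back the problem to the fixed reference interval $(0,1)$ through the time-dependent diffeomorphisms $\phi^A_t(\xi)=s_0+\xi(s_1(t)-s_0)$ and $\phi^B_t(\xi)=s_1(t)+\xi(s_2(t)-s_1(t))$, and to then construct the free boundaries via a Schauder fixed-point argument. Writing $u_l(t,\xi):=\rho_l(t,\phi^l_t(\xi))$, equations~\eqref{eq:contin_110}--\eqref{eq:contin_11} become a pair of quasilinear parabolic equations on $(0,T)\times(0,1)$ with extra advective terms proportional to $s_1'$ and $s_2'$, coupled through~\eqref{transmis_1} and the ODEs in~\eqref{weak_sol_free_bp_22}. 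By Assumption~\ref{assum_1}(ii), whenever $u_l\ge\rho_l^{\rm eq}$ the pulled-back diffusion coefficients are uniformly positive, so the parabolic subproblems are non-degenerate.

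First I would set up the fixed-point map on a short interval $[0,T_0]$. Let $\mathcal{S}_K$ be the closed convex subset of $W^{1,3}(0,T_0)^2$ of pairs $(\bar s_1,\bar s_2)$ satisfying the prescribed initial values, $s_0<\bar s_1<\bar s_2$ and $\|\bar s_i'\|_{L^3}\le K$. For each $(\bar s_1,\bar s_2)\in\mathcal{S}_K$ the coefficients of the transformed parabolic system become known functions of $(t,\xi)$. The nonlinear transmission equality $F_A(1/u_A)/\eta_A=F_B(1/u_B)/\eta_B$ at $\xi=1$ (resp.\ at $\xi=0$) is used as a Dirichlet-type coupling between the two equations, and is solvable because $r\mapsto F_l(1/r)$ is a strictly increasing $C^3$-diffeomorphism from $(\rho_l^{\rm eq},\infty)$ onto $(0,\infty)$ by~\eqref{e.FBo}. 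Classical quasilinear parabolic theory then yields $u_l\in H^1(0,T_0;L^2(0,1))\cap L^2(0,T_0;H^2(0,1))$, which embeds continuously into $C([0,T_0];H^1(0,1))$ and thus provides the boundary traces of $u_l$ and $\partial_\xi u_l$ needed to define $\Phi:(\bar s_1,\bar s_2)\mapsto(s_1,s_2)$ by integrating~\eqref{weak_sol_free_bp_22}.

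Closing the fixed-point requires a priori estimates uniform in the iterate. The essential ingredients are a minimum principle giving $u_l\ge\rho_l^{\rm eq}$, which rests on $D_l(\rho_l^{\rm eq})=F_l(1/\rho_l^{\rm eq})=0$, $G\ge 0$ and the Stefan/no-flux structure of the boundary conditions; an $L^\infty$ upper bound $u_A\le\max(\|\rho_A^0\|_\infty,\rho^{\rm M})$ obtained by comparison with the logistic ODE $\dot z=G(z)z$, together with a corresponding bound on $u_B$ inherited through the monotone transmission relation; parabolic energy estimates produced by testing the $\xi$-equations with $\partial_t u_l$, yielding $H^1(L^2)\cap L^2(H^2)$ control with constants depending only on $K$, the initial norms, and the lower bounds on $D_l$ and on $s_2-s_1$; and a positive lower bound on $s_2-s_1$ derived from~\eqref{weak_sol_free_bp_22} and from $F_B(1/u_B)>0$ whenever $u_B>\rho_B^{\rm eq}$. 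For $T_0$ sufficiently small these estimates make $\Phi$ a continuous self-map of $\mathcal{S}_K$ whose image is precompact in $C([0,T_0])^2$, so Schauder's theorem produces a local solution; the same estimates, applied with time-uniform constants, then support a continuation argument up to arbitrary $T>0$.

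The main obstacle is the interior free boundary at $s_1(t)$. In contrast with classical Stefan problems or the frameworks of~\cite{DuLin2010,Evans_1,Evans_2,Visintin}, no Dirichlet value of $\rho_l$ is prescribed at $s_1$ and the density is genuinely discontinuous across it, which rules out both the enthalpy method and continuity-based arguments. The resolution is to read the algebraic transmission $F_A(1/\rho_A)/\eta_A=F_B(1/\rho_B)/\eta_B$ as an \emph{implicit} link between the two interface traces, invertible through the monotone map $F_B^{-1}\circ((\eta_B/\eta_A)F_A)$, while the flux-continuity condition in the third line of~\eqref{transmis_1} enforces that the two Rankine--Hugoniot identities yield the same $s_1'$. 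Keeping this coupling consistent along the Schauder iteration is the delicate technical point of the proof.
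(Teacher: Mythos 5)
Your overall architecture matches the paper's: fix the domains by a time-dependent change of variables, run a two-layer Schauder fixed point (densities for given boundaries, then the boundary velocities in an $L^{3}$/$W^{1,3}$ ball) on a short interval, obtain $\rho_l^{\rm eq}\le \rho_l\le \rho_l^{\rm M}$ by comparison, and continue in time. The use of a global affine pull-back to $(0,1)$ instead of the paper's cutoff-localised map \eqref{transform_1} is immaterial. However, there is a concrete gap at the heart of the argument: you propose to close the energy estimate by testing the transformed equations with $\partial_t u_l$. With that test function the interface at $s_1$ produces the boundary terms $D_A(u_A)\partial_\xi u_A\,\partial_t u_A - D_B(u_B)\partial_\xi u_B\,\partial_t u_B$ (up to Jacobian factors), and neither factor is continuous across the interface under the transmission conditions \eqref{transmis_1}, so these terms neither cancel nor are absorbable; the estimate fails as stated. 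The paper's resolution — which is precisely the mechanism you flag as ``the delicate technical point'' but do not supply — is to test with $\partial_t F_l(w_l)/w_l$: then the interface contribution is $\bigl[w_A^{-1}\partial_y F_A(w_A)\,\partial_t F_A(w_A) - w_B^{-1}\partial_y F_B(w_B)\,\partial_t F_B(w_B)\bigr]_{y=s_1^\ast}$, which vanishes identically because the condition $F_A(w_A)=F_B(w_B)$ forces $\partial_t F_A(w_A)=\partial_t F_B(w_B)$ there while the weighted flux condition matches the remaining factors. A related imprecision: $C([0,T_0];H^1(0,1))$ does not give pointwise-in-time traces of $\partial_\xi u_l$; one only gets the gradient trace in $L^2$ (or, via Gagliardo--Nirenberg interpolation between $L^\infty(L^2)$ and $L^2(H^2)$, in $L^3$) in time, which is why the whole construction must be run in $W^{1,3}(0,T_0)$ for the boundaries rather than with continuous velocities.

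The second gap is the continuation step, which you assert in one clause but which occupies roughly half of the paper's proof. The local existence time produced by the fixed point depends on $\|\partial_x F_l(1/\rho_l)\|_{L^2(\Omega_l(t))}$ at the restart time and on bounds for $s_1'$, $s_2'$; without uniform-in-time control of these quantities the existence intervals could shrink to zero. The paper secures this by (i) a barrier-function comparison argument for $v=\eta_A^{-1}F_A(1/\rho_A)$ near $x=s_1(t)$, giving $0\le s_1'\le C$ uniformly (the bound on $s_2'$ is easier), and (ii) second-order energy estimates (testing the time-differentiated equations with $\partial_t^2 F_l(w_l)/w_l$) yielding $\partial_y F_l(w_l)\in L^\infty(0,T;L^2)$ uniformly. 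Your claim that the short-time estimates ``applied with time-uniform constants'' suffice begs exactly this question. The remaining ingredients of your sketch (minimum principle from $D_l(\rho_l^{\rm eq})=0$, upper bound for $\rho_B$ through the monotone transmission relation, mass conservation keeping $s_2-s_1$ bounded below) are consistent with the paper.
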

\begin{proof}   
In order to prove the existence of a solution of the free-boundary problem  \eqref{eq:contin_110}, \eqref{eq:contin_11}, \eqref{eq:two_popul_model12}  we consider iterations over successive time intervals and use a fixed point argument. In particular, we first show the existence of a solution on a time interval  $[0, T_1]$ such that 
\begin{equation}\label{cond_sj}
|s_1(t) - s_1^\ast| \leq \frac {s_1^\ast - s_0} 8 \quad \text{and} \quad |s_2(t) - s_2^\ast| \leq \frac {s_2^\ast - s_1^\ast} 8, \quad \text{ for } \; \; t \in [0,T_1]. 
\end{equation} 
Subsequently,  the boundedness of $ s_1^\prime$ and $s_2^\prime$,  shown at the end of the proof,  will allow iteration over successive time intervals in order to obtain an existence result for  $t \in (0,  T]$.  

We begin by making the change of variables 
\begin{equation}\label{transform_1}
(t,x) \mapsto (t,y), \quad \text{ with } \quad  x = y + \zeta(y)(s_1(t) - s_1^\ast) + \xi(y) (s_2(t) - s_2^\ast), 
\end{equation}
with $\zeta, \xi \in C^2_0(\mathbb R)$ such that $\zeta(y) = 1$ for $ |y- s_1^\ast| <  \alpha$ and $\zeta(y) = 0$ for $ |y- s_1^\ast| > 2  \alpha$, while 
$\xi(y) = 1$ for $ |y- s_2^\ast| <  \alpha$ and   $\xi(y) = 0$ for $ |y- s_2^\ast| > 2  \alpha$, where $\alpha =\min\{(s_1^\ast-s_0)/4,  (s_2^\ast -s_1^\ast)/4\}$. 
The change of variables~\eqref{transform_1} transforms the time-dependent domains $\Omega_A(t)=(s_0, s_1(t))$ and $\Omega_B(t)=(s_1(t), s_2(t))$ into the fixed intervals $\Omega_A^\ast = (s_0,s_1^\ast)$ and $\Omega_B^\ast= (s_1^\ast, s_2^\ast)$, respectively. A similar change of variables was considered in~\cite{DuLin2010}. Notice that, for $s_1(t)$ and $s_2(t)$ satisfying  conditions~\eqref{cond_sj},  such a change of variables defines a diffeomorphism from 
$[0, + \infty)$ into $[0, + \infty)$. Hence  we obtain 
$$
\rho_l (t,x) = \rho_l\big(t,  y + \zeta(y)(s_1(t) - s_1^\ast) + \xi(y) (s_2(t) - s_2^\ast)\big) = w_l(t,y) \; \; \; \text{ for } \; l = A,B, 
$$
where $w_A$ and $w_B$ satisfy the  reaction-diffusion-convection equations 
\begin{equation}\label{transformed_free_bp_11}
\begin{aligned} 
&J_A(s_1) \partial_t w_A -  \partial_y\left( R^2_A(s_1) D_A(w_A)  \partial_y  w_A\right) -  Q_A(s^\prime_{1}) \partial_y w_A - J_A(s_1) G_A(w_A) =0, \\
  & J_B(s_1, s_2) \partial_t w_B -  \partial_y\left(R^2_B(s_1, s_2)  D_B(w_B) \partial_y  w_B \right) - Q_B(s_1^\prime,  s_2^\prime) \partial_y w_B = 0, 
 \end{aligned} 
\end{equation} 
complemented with the  nonlinear transmission and boundary conditions 
\begin{equation}\label{transformed_free_bp_transm_12}
\begin{aligned} 
&\partial_y F_A(w_A) = 0  \qquad  && \text{ at } \; y= s_0, \\
& F_A(w_A) =    F_B (w_B)  \quad  && \text{ at } y=s_1^\ast , \\
&\frac{\partial_y F_A(w_A)}{ w_A}  = \frac{\partial_y F_B(w_B)}{w_B}  \quad  && \text{ at } y=s_1^\ast ,\\
&  \frac{\partial_y F_B(w_B)}{w_B}  = - 2    F_B({w_B} )   \qquad && \text{ at } \; y= s_2^\ast, 
\end{aligned} 
\end{equation}
and the equations for the velocities $s^\prime_{1}$ and $s^\prime_2$
\begin{equation}\label{eq_dsj_11}
\begin{aligned} 
& \frac{ds_{1}}{dt} w_A =-  \partial_y F_A({w_A})   && \text{ at } y= s_1^\ast, \\
& \frac{ds_{2}}{dt}  w_B  = -   \partial_y F_B({w_B})     && \text{ at } y= s_2^\ast. 
\end{aligned}  
\end{equation}
Notice that for ease of notation we denote
 $$
 F_l( w_l) \equiv \frac 1 \eta_l F_l(1/w_l)\;  \text{ and } \;  F_l^\prime( w_l) \equiv -\frac 1 \eta_l \frac{F_l^\prime(1/w_l)}{w_l^2},   \qquad l=A,B,
 $$
$G_A(w_A) = G(w_A) w_A$, and the functions $D_l(w_l)$ are defined in terms of $F_l(w_l)$ according to~\eqref{eq:Dl}.

In equations \eqref{transformed_free_bp_11},   since $\xi(y) = 0$ for $y< s_2^\ast - 2\alpha$  and $s_{1}(t) < s_2^\ast - 2\alpha $ for $t\in [0, T_1]$, we have 
$$
\begin{aligned} 
&R_A(s_{1}) = \frac{dy}{dx} =  \frac 1 {1+ \zeta^\prime(y)( s_{1}(t) - s_1^\ast)} \;  && \text{for }\;   s_0 < x < s_1^\ast,     \\
& R_B(s_{1}, s_{2}) = \frac{dy}{dx} =  \frac 1 {1+ \zeta^\prime(y)( s_{1}(t) - s_1^\ast) + \xi^\prime(y)( s_{2}(t) - s_2^\ast) } \; && \text{for }\;   s_1^\ast < x < s_2^\ast,   \\
& J_A(s_1) = 1+ \zeta^\prime(y)( s_{1}(t) - s_1^\ast) \; &&  \text{for }\;   s_0 < x < s_1^\ast, \\
& J_B(s_1, s_2)= 1+ \zeta^\prime(y)( s_{1}(t)  - s_1^\ast) + \xi^\prime(y)( s_{2}(t) - s_2^\ast)  \quad && \text{for }   s_1^\ast < x < s_2^\ast, \\
& Q_A(s^\prime_{1})  =  \zeta(y) \, s^\prime_{1}(t), \qquad Q_B(s^\prime_1, s^\prime_2) =  \zeta(y) s_{1}^\prime(t) + \xi(y) s^\prime_{2}(t), \\
&  \frac{dy}{dt} =  \frac{ Q_A(s^\prime_1) }{J_A(s_1)} \quad \;\;     \text{for }\;   s_0 < x < s_1^\ast , \qquad\quad
   \frac{dy}{dt} =   \frac{ Q_B(s^\prime_1, s^\prime_2) }{J_B(s_1, s_2)}   \quad && \text{for }   s_1^\ast < x < s_2^\ast.
\end{aligned} 
$$

The assumptions on $F_l$ and  $D_l$, for $l=A,B$, ensure that
\begin{equation}\label{eq_bound_equil}
\begin{aligned} 
&D_A(w_A)R_A( s_1)\partial_y w_A =0  \quad && \text{ for }  0< w_A \leq \rho_A^{\rm eq}, \\
&D_B(w_B)R_B( s_1,  s_2)\partial_y w_B =0  \quad  && \text{ for } 0<  w_B \leq \rho_B^{\rm eq}. 
\end{aligned} 
\end{equation}

Notice that, without loss of generality, we can focus on the case where $\rho_l^0 >  \rho_l^{\rm eq}$ for $l=A,B$. In fact, if $\rho_l^0 =  \rho_l^{\rm eq}$  the growth term in the equation for $w_A$ would  result into $w_A(t,y)>\rho_A^{\rm eq}$ and $F_A(w_A)>0$,  thus  ensuring that  $w_B(t,y)>\rho_B^{\rm eq}$ due to the transmission conditions at $s_1^\ast$ and the convection term in the equation for $w_B$. 
  In the case where $\rho_l^0 >  \rho_l^{\rm eq}$ for $l = A,B$, using the maximum principle and relations \eqref{eq_bound_equil} we obtain that $w_l(t,x) > \rho_l^{\rm eq}$ for $(t,x) \in (0,T)\times \Omega_{l}^\ast$. Therefore, we conclude that   system  \eqref{transformed_free_bp_transm_12},  \eqref{eq_dsj_11}, or  equivalently  system  \eqref{eq:contin_110},  \eqref{eq:contin_11},  is nondegenerate. 
Notice also that assuming   $s_j(t)\in H^2(0,T)$ with $s_j^\prime(t) \geq 0$ for $t\in [0,T]$ and $j=1,2$, and considering $F_l(w_l)$, $\partial_y^2 \partial_t F_l(w_l)$ and $\partial_t^2 w_l$ as test functions in equations \eqref{transformed_free_bp_transm_12} and \eqref{eq_dsj_11}, one can prove that $w_l$ is continuous in $\overline \Omega_{l, T}^\ast$, while $\partial_t w_l$ and $\partial_y^2 F_l(w_l)$ are  continuous in $(0, T)\times \Omega_l^\ast$ for $l=A,B$, which is the regularity required to apply the maximum principle. A similar approach was previously used in the analysis of the porous medium equation~\cite{Vazquez}.    

The assumptions on $F_B$ imply that
$$
D_B(w_B) \partial_y w_B  = -2 w_B F_B({w_B}) \leq 0\; \;  \text{ at } \; y = s_2^\ast, \;  \; t\geq 0, 
$$
and, applying the maximum principle to the equation for $w_B$, we find that $w_B$ has a minimum at $s_2^\ast$ and a maximum at $s_1^\ast$. Hence, $\partial_y w_B(t,y) \leq 0$ at $y=s_1^\ast$ for $t> 0$ and, therefore, $\partial_y F_A(w_A) \leq 0$ at $y=s_1^\ast$.
Applying the comparison principle and using the fact that $F^\prime_A(w_A) =0$ for $0<w_A(t,y) \leq \rho_A^{\rm eq}$, along with the assumptions on $G$ and on the initial conditions, we obtain 
\begin{equation}\label{bound_wA}
\rho_A^{\rm eq} \leq w_A(t, y) \leq  \rho^{\rm M}_A \; \; \text{ in }\; \; [s_0, s_1^\ast], \; \ t\geq 0, 
\end{equation}
where $\rho^{\rm M}_A = \max \{ \rho^{\rm M} , \max\limits_{x \in [s_0, s_1^\ast]} \rho_A^0(x)\}$.
Moreover, applying the maximum principle to the equation for $w_B$ and using the assumptions on $F_B$ and on the initial data, along with  the boundedness of $w_A$ and the transmission conditions at $y=s_1^\ast$, yield   
 \begin{equation}\label{bound_wB}
 \rho_B^{\rm eq} \leq w_B(t,y) \leq \rho_B^{\rm M}  \; \; \text{ in }\; \; [s_1^\ast, s_2^\ast], \; \ t\geq 0, 
 \end{equation}
where  $\rho_B^{\rm M}  = \max\{F_B^{-1}(F_A(\rho^{\rm M}_A)), \max\limits_{x \in [s_1^\ast, s_2^\ast]}  \rho_B^0(x)\}$. Using these results along with the change of variables given by equation~\eqref{transform_1} we conclude that   
$$
\begin{aligned} 
&\rho_A^{\rm eq} \leq \rho_A(t,x) \leq \rho^{\rm M}_A \qquad  && \text{ for } x\in [s_0, s_1(t)], \; \; t\geq 0,  \\
& \rho_B^{\rm eq} \leq \rho_B(t,x)   \leq  \rho_B^{\rm M}  \qquad && \text{ for } x\in [s_1(t), s_2(t)], \; \; t\geq 0. 
\end{aligned}
$$
If $w_B$ is nonconstant in $(s_1^\ast, s_2^\ast)$ and $w_B(t, s_j^\ast)\neq  \rho_B^{\rm eq}$ for $j=1,2$ and $t\ge 0$, the maximum principle yields $\partial_y w_B(t, s_2^\ast)  < 0$ and $\partial_y w_B(t, s_1^\ast)  < 0$. This along with the assumptions on $F_B$ ensures the monotonicity of the free boundaries $\{x= s_1(t)\}$ and $\{x= s_2(t)\}$, \emph{i.e.}
$$ 
\begin{aligned} 
& \frac{ d s_2(t)}{dt} >0  \;   \;  \text{ if } \; w_B(t, s_2^\ast) > \rho_B^{\rm eq},  \qquad  \frac{ d s_2(t)}{dt}  = 0 \;  \;   \text{ if } \; w_B(t, s_2^\ast) = \rho_B^{\rm eq}, \;  \;\quad   t\geq 0,  
\\ 
& \frac{ d s_1(t)}{dt} >0  \;  \;   \text{ if } \; w_B(t, s_1^\ast) > \rho_B^{\rm eq},  \qquad  \frac{ d s_1(t)}{dt}  = 0 \;\;     \text{ if } \; w_B(t, s_1^\ast) = \rho_B^{\rm eq}, \;  \quad  t\geq 0.  
\end{aligned} 
$$
 
To prove the existence of a solution of  problem \eqref{transformed_free_bp_11}-\eqref{eq_dsj_11} we  use a fixed point argument.   Let 
\begin{equation}\label{eq_s_ast_1}
s_1^{\ast, 1} = - \frac 1{  \rho_A^0 \eta_A} \partial_x F_A\Big(\frac 1{\rho_A^0(s_1^\ast)}\Big), \qquad  
s_2^{\ast, 1} =  - \frac 1{\rho_B^0\eta_B}  \partial_x F_B\Big(\frac 1 {\rho_B^0(s_2^\ast)}\Big),
\end{equation}
which are both well-defined quantities due to the assumptions on $F_l$ and $\rho_l^0$,  for $l=A,B$. Moreover, consider 
$$
\begin{aligned} 
&\mathcal W_l  =  \big\{u \in L^6(0, T_1; W^{1,4}(\Omega_l^\ast))  \, : \,   \rho_l^{\rm eq} \leq u(t,x) \leq \rho^{\rm M}_l \; \text{ for } \;  (t,x) \in \Omega_{l, T_1}^\ast, 
\; \|\partial_t u\|_{L^2(\Omega_{l, T_1}^\ast)} \leq \mu \big\}, \\
&\mathcal W_s  =  \big\{ (s_1, s_2) \in W^{1,3}(0,T_1)^2: \;  \|s^\prime_j  -s_j^{\ast,1}\|_{L^{3}(0, T_1)}\leq 1, \; \text{ for } \;  j=1,2 \big\},   
\end{aligned} 
$$
 for $l=A,B$, some constant $\mu>0$, and $T_1 >0$.  Notice that for $(s_1,  s_2) \in \mathcal W_s$ we have 
 $$
\sup\limits_{(0,T_1)}| s_j(t) - s_j^\ast| \leq \int_0^{T_1}\Big |\frac {d s_j}{dt}  \Big| dt \leq T_1^{\frac 23 }  \| s^\prime_j(t)\|_{L^{3}(0, T_1)} \leq T_1^{\frac 23}(1+ \|s_j^{\ast, 1}\|_{L^3(0,T_1)}), \qquad j=1,2.
$$
Therefore, choosing 
$$
T_1=\min\big\{ \big((s^\ast_1-s_0)/ 8\big)^{\frac 3 2} \big(1+ \|s_1^{\ast, 1}\|_{L^3(0,T_1)})^{-\frac 3 2}, \big((s^\ast_2-s^\ast_1)/ 8\big)^{\frac 3 2} \big(1+ \|s_2^{\ast, 1}\|_{L^3(0,T_1)})^{-\frac 3 2} \big\}  ,
$$
we find that  $s_j$ satisfies the conditions~\eqref{cond_sj}, for $j=1,2$, and the change of coordinates~\eqref{transform_1} is well-defined for all $(s_1, s_2) \in \mathcal W_s$.

For some given  $(\tilde s_1, \tilde  s_2) \in \mathcal W_s$ and $\widetilde w_l \in \mathcal W_l$, with $l= A,B$,  we first consider the problem given by the following equations for $w_A$ and $w_B$ 
\begin{equation}\label{transformed_free_bp_1}
\begin{aligned} 
&  J_A(\tilde s_1)\partial_t w_A -   \partial_y\big( R^2_A(\tilde s_1)  \partial_y F_A(w_A)\big) -  Q_A(\tilde s^\prime_{1}) \partial_y w_A  && \hspace{-2 cm} =  J_A(\tilde s_1) G_A(\widetilde w_A)   \quad \text{in } \Omega_A^\ast , \, t>0,  \\
& J_B(\tilde s_1, \tilde s_2)\partial_t w_B -  \partial_y\big(R^2_B(\tilde s_1, \tilde s_2)  \partial_y F_B(w_B)\big) -  Q_B(\tilde s_1^\prime,  \tilde s_2^\prime) \partial_y w_B 
&& \hspace{-0.3 cm}= 0   \quad \quad \; \text{ in } \Omega_B^\ast, \, t>0,  \\
 &\partial_y F_A(w_A)  = 0   \qquad &&  \text{ at }\;  y=s_0 ,\, t>0, \\
&  \frac{ \partial_y F_A( w_A)}{\widetilde w_A}  =    \frac{ \partial_y F_B(w_B)}{\widetilde w_B},   \qquad     F_A(w_A)=  F_B(w_B) && \text{ at } \;  y = s_1^\ast, \, t>0, \\
 & \frac{\partial_y F_B(w_B)}{\widetilde w_B}  =  -  2  F_B\left(w_B\right) &&  \text{ at } y= s_2^\ast, \, t>0, \\
 & w_A(0) = \rho_A^0  \qquad  && \text{ in }  (s_0, s_1^\ast), \\
 & w_B(0) = \rho_B^0   && \text{ in }  (s_1^\ast, s_2^\ast).
 \end{aligned} 
\end{equation}
For  $(\tilde s_1, \tilde  s_2) \in \mathcal W_s$ and $\widetilde w_l \in L^2(0,T_1; H^1(\Omega_l^\ast))$, with $l=A,B$, applying the Rothe-Galerkin method and using the a priori estimates obtained by considering $F_l(w_l)/{\widetilde w_l}$ as  a test function in the equations for $w_l$, we obtain the existence of a weak solution $F_l(w_l) \in L^2(0, T_1; H^1(\Omega_l^\ast))$, with $\partial_t w_l \in L^2(0, T_1; H^{-1}(\Omega_l^\ast))$, of problem~\eqref{transformed_free_bp_1}.  Notice that for $\tilde s_1, \tilde  s_2 \in H^2(0,T_1)$, in the same way as below,  we can show that the solutions of \eqref{transformed_free_bp_1} satisfy the regularity properties required by the maximum principle, and obtain that the solutions are bounded and satisfy \eqref{bound_wA} and \eqref{bound_wB}, and  the equations in \eqref{transformed_free_bp_1} are nondegenerate.

To derive  a priori estimates for $\partial_t w_A$ and  $\partial_t w_B$, we consider $\phi = \partial_t F_A(w_A)/ \widetilde w_A$ and $\psi = \partial_t F_B(w_B)/ \widetilde w_B$ as test functions for the equations in problem~\eqref{transformed_free_bp_1}. In this way, we obtain 
\begin{equation}\label{estim_apriori_dt}
\begin{aligned} 
\sum_{l=A,B}
\Big\{\int_0^\tau \hspace{-0.1 cm } \int_{\Omega_l^\ast} \frac{J_l(\tilde s) D_l(w_l)}{\widetilde w_l} |\partial_t w_l|^2 dy dt
+\frac 12  \int_0^\tau \hspace{-0.1 cm } \frac { d} {dt}  \int_{\Omega_l^\ast}  \frac{R_l(\tilde s)^2}{\widetilde w_l} \,   |\partial_y  F_l(w_l)|^2 dy dt
\\ -\int_0^\tau  \hspace{-0.1 cm }\int_{\Omega_l^\ast}  \Big[ Q_l(\tilde s^\prime)\,   \frac{\partial_y F_l(w_l)}{\widetilde w_l} \partial_t w_l 
+  \frac{R_l(\tilde s)^2}{\widetilde w_l^2} \,   \partial_y  F_l(w_l)\partial_t F_l(w_l) \partial_y \widetilde w_l  \Big]dy dt
\\  
+   \int_0^\tau  \hspace{-0.1 cm } \int_{\Omega_l^\ast}  \Big[ \frac 12 \frac{R_l^2(\tilde s)}{\widetilde w_l^2 } \,   |\partial_y  F_l(w_l)|^2 \partial_t \widetilde w_l 
-  \frac{R_l(\tilde s) \partial_t R_l(\tilde s)}{\widetilde w_l} |\partial_y  F_l(w_l)|^2 \Big] dy dt \Big\}
\\  -  \int_0^\tau \hspace{-0.1 cm } \int_{\Omega_A^\ast}   J_A(\tilde s_1)  G_A(\widetilde w_A) \frac{\partial_t F_A(w_A)}{\widetilde w_A}  dy dt  
 + 2 \int_0^\tau \hspace{-0.1 cm }   F_B(w_B) \partial_t F_B(w_B)  \Big|_{y=s_2^\ast} dt
\\  =   \int_0^\tau  \Big[\frac{\partial_y F_A(w_A)}{\widetilde w_A} \partial_t F_A(w_A)  - \frac{\partial_y F_B(w_B) }{\widetilde w_B}   \partial_t F_B(w_B)  \Big]_{y=s_1^\ast}  dt, 
\end{aligned} 
\end{equation}
for $\tau \in (0, T_1]$, where $J_A(\tilde s) = J_A(\tilde s_1)$, $J_B(\tilde s) = J_B(\tilde s_1, \tilde s_2)$,  $R_A(\tilde s) = R_A(\tilde s_1)$, $R_B(\tilde s) = R_B(\tilde s_1, \tilde s_2)$, 
$Q_A(\tilde s^\prime) = Q_A(\tilde s_1^\prime)$, and $Q_B(\tilde s^\prime) = Q_B(\tilde s_1^\prime, \tilde s_2^\prime)$. 

The transmission conditions in problem \eqref{transformed_free_bp_1} ensure that the integral at $y = s_1^\ast$ is equal to zero, while for the integral at $y= s_2^\ast$ we have 
 $$
 2 \int_0^\tau  F_B(w_B) \partial_t F_B(w_B)  \Big|_{y=s_2^\ast} dt   = |F(w_B(\tau, s_2^\ast))|^2 -  |F(w_B(0, s_2^\ast))|^2. 
 $$
From the equation for $w_A$ in problem \eqref{transformed_free_bp_1} we obtain 
$$
\begin{aligned} 
\|R_A^2\partial_y^2 F_A(w_A) \|^2_{L^2(\Omega_{A,\tau}^\ast)} 
\leq \|2R_A \partial_y R_A \partial_y  F_A(w_A) \|^2_{L^2(\Omega_{A,\tau}^\ast)}  +  \|J_A(\tilde s_1)\partial_t w_A\|_{L^2(\Omega_{A,\tau}^\ast)}^2 \\ +  \|Q_A(\tilde s_1^\prime) \partial_y w_A\|_{L^2(\Omega_{A,\tau}^\ast)}^2  + 
 \|J_A(\tilde s_1)G_A(\widetilde w_A)\|^2_{L^2(\Omega_{A,\tau}^\ast)}.
\end{aligned} 
$$
Using the definition of $Q_A$ and H\"older inequality,  the third term on the right-hand side is estimated as 
$$
 \|Q_A(\tilde s_1^\prime) \partial_y w_A\|_{L^2(\Omega_{A,\tau}^\ast)}^2
 \le C_1 \int_0^\tau |\tilde s_1^\prime|^2\| \partial_y w_A\|_{L^2(\Omega_{A}^\ast)}^2 dt 
 \le C_\delta \int_0^\tau |\tilde s_1^\prime|^3 dt + \delta \int_0^\tau \| \partial_y w_A\|_{L^2(\Omega_{A}^\ast)}^6 dt, 
 $$
for any fixed $\delta >0$. The assumptions on $G_A$ and $F_A$, the boundedness of   $J_A(\tilde s_1)$,  $R_A$, and $\partial_y R_A$, and   the fact that 
$R_A^2(\tilde s_1) \ge 4/9$ and  $F_A^\prime(w_A) = D_A(w_A)  \ge  d_A  >0$ for $w_A> \rho_A^{\rm eq}$, imply 
$$
\begin{aligned} 
\|\partial_y^2 F_A(w_A) \|^2_{L^2(\Omega_{A,\tau}^\ast)} \leq C_1\Big[ \|\partial_t w_A\|_{L^2(\Omega_{A,\tau}^\ast)}^2 +  \|\partial_y F_A(w_A)\|_{L^2(\Omega_{A,\tau}^\ast)}^2 \Big] + 
C_\delta \|\tilde s_1^\prime\|^3_{L^3(0,\tau)} \\ + \delta \int_0^\tau \|\partial_y F_A(w_A)\|_{L^2(\Omega_{A}^\ast)}^6 dt + C_2\tau,   
\end{aligned} 
$$
for $\tau \in (0, T_1]$.  Notice that for  $s_1^\prime \in L^\infty(0,T_1)$ we would have the $L^2$-norm of $\partial_y F_A(w_A)$  on the right-hand side of the last inequality. A similar inequality for $\|\partial_y^2 F_B(w_B) \|^2_{L^2(\Omega_{B,\tau}^\ast)}$ follows from the equation for $w_B$ in problem~\eqref{transformed_free_bp_1}.  The Gagliardo-Nirenberg inequality gives  
\begin{equation}\label{estim_GN_L6}
 \|\partial_y F_l(w_l)\|_{L^2(\Omega_{l}^\ast)}^6 \leq C \big(\|\partial_y^2 F_l(w_l) \|^2_{L^2(\Omega_{l}^\ast)}\|F_l(w_l)\|_{L^\infty(\Omega_{l}^\ast)}^4 +   \|F_l(w_l)\|^6_{L^2(\Omega_{l}^\ast)} \big), 
 \end{equation}
and, using the fact that $w_l$ is uniformly bounded and choosing $\delta >0$ sufficiently small, we obtain    
\begin{equation}\label{estim_H2}
\begin{aligned}
\|\partial_y^2 F_l(w_l) \|^2_{L^2(\Omega_{l,\tau}^\ast)} \leq C_1 \|\partial_t w_l\|_{L^2(\Omega_{l,\tau}^\ast)}^2
 + C_2 \|\partial_y F_l(w_l)\|_{L^2(\Omega_{l,\tau}^\ast)}^2 + 
C_\delta \|\tilde s^\prime\|^3_{L^3(0,\tau)} + C_3\tau, 
\end{aligned} 
\end{equation}
for $\tau \in (0, T_1]$, where $|\tilde s^\prime| = |\tilde s_1^\prime|$ if $l=A$ and   $|\tilde s^\prime| = |\tilde s_1^\prime|+ |\tilde s_2^\prime|$ if $l=B$.  In a similar way, we also obtain the following pointwise in the time variable estimate 
\begin{equation}\label{estim_H2_point}
\begin{aligned}
\|\partial_y^2 F_l(w_l(t)) \|^2_{L^2(\Omega_{l}^\ast)} \leq C_1 \|\partial_t w_l(t)\|_{L^2(\Omega_{l}^\ast)}^2
 + C_2 \|\partial_y F_l(w_l(t))\|_{L^2(\Omega_{l}^\ast)}^2  + C_\delta |\tilde s^\prime(t)|^3 + C_3 
\end{aligned} 
\end{equation}
for a.e. $t \in [0, T_1]$. Additionally, using the Gagliardo-Nirenberg inequality we have
\begin{equation} \label{estim_GN_L4}
\begin{aligned}
 & \|\partial_y F_l(w_l)\|_{L^4(\Omega_{l}^\ast)}^4 \leq C\big( \|\partial_y^2 F_l(w_l) \|^2_{L^2(\Omega_{l}^\ast)}\|F_l(w_l)\|_{L^\infty(\Omega_{l}^\ast)}^2
 +  \|F_l(w_l)\|^4_{L^2(\Omega_{l}^\ast)} \big), \\
&  \|\partial_y F_l(w_l)\|_{L^4(\Omega_{l}^\ast)}^4 \leq C\big(\|\partial_y^2 F_l(w_l) \|_{L^2(\Omega_{l}^\ast)}\|\partial_y F_l(w_l)\|_{L^2(\Omega_{l}^\ast)}^3
 +  \|F_l(w_l)\|^4_{L^2(\Omega_{l}^\ast)} \big). 
 \end{aligned} 
  \end{equation} 
We shall estimate each term in equation \eqref{estim_apriori_dt} separately. Using  estimates  \eqref{estim_GN_L6} and  \eqref{estim_H2} yields  
$$
\begin{aligned} 
 \int_0^\tau  \int_{\Omega_l^\ast}  \Big| Q_l(\tilde s^\prime)\,   \frac{\partial_y F_l(w_l)}{\widetilde w_l} \partial_t w_l \Big| dy dt 
 \leq  \delta \int_0^\tau  \Big[  \| \partial_y F_l(w_l)\|^6_{L^2(\Omega_l^\ast)} + \| \partial_t w_l\|^2_{L^2(\Omega_l^\ast)} \Big] dt 
\\ + C_\delta \| \tilde s^\prime\|^3_{L^3(0, \tau)} 
\leq \delta \|\partial_t w_l\|_{L^2(\Omega_{l,\tau}^\ast)}^2
 + \delta  \|\partial_y F_l(w_l)\|_{L^2(\Omega_{l,\tau}^\ast)}^2  + 
C_\delta \|\tilde s^\prime\|^3_{L^3(0,\tau)} + C\tau.
\end{aligned} 
$$   
Notice  that assuming the boundedness of $\tilde s^\prime_j$, with $j=1,2$, we would have the $L^2(0,T; L^2(\Omega_l^\ast))$-norm instead of the $L^6(0,T; L^2(\Omega_l^\ast))$-norm of $\partial_y F_l(w_l)$ in the last inequality. Using the results in \eqref{estim_H2} and \eqref{estim_GN_L4}, along with the boundedness of $w_l$ and $\widetilde w_l$, we estimate the next term in \eqref{estim_apriori_dt} as  
$$
\begin{aligned} 
& \int_{\Omega_{l,\tau} ^\ast}  \Big|\frac{R_l^2(\tilde s)}{\widetilde w_l^2} \,   \partial_y  F_l(w_l)\partial_t F_l(w_l) \partial_y \widetilde w_l\Big|  dy dt
\leq   \delta\big[  \|\partial_y  F_l(w_l)\|^4_{L^4(\Omega_{l, \tau}^\ast)} + \|\partial_t F_l(w_l)\|^2_{L^2(\Omega_{l, \tau}^\ast)}\big] \\
& + C_\delta   \|\partial_y  \widetilde w_l\|^4_{L^4(\Omega_{l, \tau}^\ast)}
\leq  \delta \big[\|\partial_y  F_l(w_l)\|^2_{L^2(\Omega_{l, \tau}^\ast)} 
+ \|\partial_t w_l\|^2_{L^2(\Omega_{l, \tau}^\ast)} \big]+ C_1 \|\tilde s^\prime\|_{L^3(0,\tau)}^3  
 + C_\delta   \|\partial_y  \widetilde w_l\|^4_{L^4(\Omega_{l, \tau}^\ast)} .
\end{aligned} 
$$
For the fourth integral in  \eqref{estim_apriori_dt} we have 
$$
\begin{aligned} 
   \int_0^\tau \hspace{-0.1 cm }  \int_{\Omega_l^\ast}  \Big|\frac 12 \frac{R_l^2(\tilde s)}{\widetilde w_l^2 }    |\partial_y  F_l(w_l)|^2 \partial_t \widetilde w_l 
-  \frac{R_l(\tilde s) \partial_t R_l(\tilde s)}{\widetilde w_l} |\partial_y  F_l(w_l)|^2 \Big| dy dt  \hspace{2.5 cm }  
\\ \leq C_\delta \| \tilde s^\prime\|^3_{L^3(0,\tau)} 
 +    \int_0^\tau \hspace{-0.1 cm }  \Big[ C_1\| \partial_y  F_l(w_l)\|^2_{L^4(\Omega_l^\ast)}\|\partial_t \widetilde w_l\|_{L^2(\Omega_l^\ast)} + \delta  \|\partial_y  F_l(w_l)\|^3_{L^2(\Omega_l^\ast)} \Big] dt  
 \\  \leq C_\delta \| \tilde s^\prime\|^3_{L^3(0,\tau)} 
    +   C_\delta \int_0^\tau  \hspace{-0.1 cm } \| \partial_y  F_l(w_l)\|^{2}_{L^2(\Omega_l^\ast)} \|\partial_t \widetilde w_l\|^{\frac 43}_{L^2(\Omega_l^\ast)} dt    + C_2 \tau^{\frac 12 }  \|\partial_t \widetilde w_l\|_{L^2(\Omega_{l, \tau}^\ast)} 
   \\ + \delta \big[\|\partial_y  F_l(w_l)\|^2_{L^2(\Omega^\ast_{l, \tau})}  
  +  \|\partial_t  w_l\|^2_{L^2(\Omega_{l, \tau}^\ast)}\big] + C_3\tau, 
\end{aligned} 
$$
for $\tau \in (0, T_1]$ and any fixed $\delta >0$. Here we used the following estimate 
$$
\begin{aligned} 
\int_0^\tau \hspace{-0.2 cm }  \| \partial_y  F_l(w_l)\|^2_{L^4(\Omega_l^\ast)}\|\partial_t \widetilde w_l\|_{L^2(\Omega_l^\ast)} dt 
\leq C  \hspace{-0.1 cm }   \int_0^\tau  \hspace{-0.15 cm } [ \| \partial^2_y  F_l(w_l)\|^{\frac 12}_{L^2(\Omega_l^\ast)} \| \partial_y  F_l(w_l)\|^{\frac 32}_{L^2(\Omega_l^\ast)} +1]\|\partial_t \widetilde w_l\|_{L^2(\Omega_l^\ast)} dt
\\ 
\leq \delta  \| \partial^2_y  F_l(w_l)\|^2_{L^2(\Omega_{l, \tau}^\ast)}  + C_\delta\int_0^\tau  \hspace{-0.1 cm } \Big[ \| \partial_y  F_l(w_l)\|^{2}_{L^2(\Omega_l^\ast)} \|\partial_t \widetilde w_l\|^{\frac 43}_{L^2(\Omega_l^\ast)} + \|\partial_t \widetilde w_l\|_{L^2(\Omega_{l}^\ast)} \Big]dt,  
 \end{aligned} 
$$
along with estimate \eqref{estim_H2}. Using \eqref{estim_GN_L4} and the boundedness of $w_l$ we also obtain 
$$ 
\int_0^\tau \hspace{-0.2 cm }  \| \partial_y  F_l(w_l)\|^2_{L^4(\Omega_l^\ast)}\|\partial_t \widetilde w_l\|_{L^2(\Omega_l^\ast)} dt 
\leq \delta    \| \partial_y^2  F_l(w_l)\|^2_{L^2(\Omega_{l, \tau}^\ast)} + C_\delta \|\partial_t \widetilde w_l\|^2_{L^2(\Omega_{l,\tau}^\ast)} + C\tau. 
$$
The boundedness of $\widetilde w_A$, along with the assumptions on $G_A$,  implies 
$$
\begin{aligned} 
 \int_{\Omega_{A, \tau}^\ast}  \Big| J_A(\tilde s_1)  G_A(\widetilde w_A) \frac{\partial_t F_A(w_A)}{\widetilde w_A}\Big|  dy dt 
 \leq C_\delta\tau + \delta \| \partial_t  w_A\|^2_{L^2(\Omega_{A, \tau}^\ast)} , 
 \end{aligned} 
$$
for $\tau \in (0, T_1]$ and  any fixed $\delta >0$.
 
Thus  for $\partial_t \widetilde w_l \in L^2((0, T_1)\times \Omega_{l}^\ast)$ and $\partial_y \widetilde w_l \in L^6(0, T_1; L^4(\Omega_l^\ast))$, combining the estimates from above, choosing  $\delta>0$ sufficiently small,   and applying the Gronwall inequality  yields   
\begin{eqnarray}\label{estim_22}
\sum_{l=A,B}\Big[ \|\partial_t  w_l\|^2_{L^2(\Omega_{l, T_1}^\ast)} + \|\partial_y F_l( w_l) \|^2_{L^\infty(0, T_1; L^2(\Omega_{l}^\ast))}\Big]
    \leq C_1\big(1+\| \tilde s_1^\prime\|^3_{L^3(0, T_1)} + \| \tilde  s_2^\prime\|^3_{L^3(0, T_1)} \big)    \nonumber
 \\  + C_2 \sum_{l=A,B} \Big[ T_1^{\frac 13}   \|\partial_y \widetilde w_l\|_{L^6(0, T_1; L^4(\Omega_l^\ast))}^4 + T_1^{\frac 12 } 
 \|\partial_t \widetilde w_l\|_{L^2(\Omega_{l, T_1}^\ast)}
+    \exp{(T_1^{\frac 13} \|\partial_t \widetilde w_l\|^{\frac 43}_{L^2(\Omega_{l, T_1}^\ast)})} \Big].
\end{eqnarray} 
In a similar way we also obtain 
\begin{eqnarray}\label{estim_222}
\sum_{l=A,B}\Big[ \|\partial_t w_l\|^2_{L^2(\Omega_{l, T_1}^\ast)} + \|\partial_y F_l(w_l)\|^2_{L^\infty(0, T_1; L^2(\Omega_{l}^\ast))}\Big]
    \leq C_1\big(1+\| \tilde s_1^\prime\|^3_{L^3(0, T_1)} + \| \tilde  s_2^\prime\|^3_{L^3(0, T_1)} \big)    \nonumber
 \\  + C_2 \sum_{l=A,B} \Big[   \|\partial_y F_l(\widetilde w_l)\|_{L^4(\Omega_{l, T_1}^\ast)}^4 +   \|\partial_t \widetilde w_l\|^{2}_{L^2(\Omega_{l, T_1}^\ast)} \Big].
\end{eqnarray} 
Thus  using  \eqref{estim_22} and \eqref{estim_222},  along with   \eqref{estim_H2} and \eqref{estim_GN_L4}, and considering $T_1$ sufficiently small,  we find that the map  $K: \mathcal W_A\times \mathcal W_B \to \mathcal W_A\times \mathcal W_B$, where 
 $(w_A, w_B)=K(\widetilde w_A, \widetilde w_B)$ is defined as a solution of problem \eqref{transformed_free_bp_1} for a given $(\tilde s_1, \tilde  s_2) \in \mathcal W_s$,   is continuous. 
  
 Considering $w_l$ in equation \eqref{estim_apriori_dt} instead of $\widetilde w_l$ and using the boundedness of $w_l$ yield
$$
\begin{aligned} 
 & \int_{\Omega_{l, \tau}^\ast} \hspace{-0.2 cm}   \Big| Q_l(\tilde s^\prime)\,   \frac{\partial_y F_l(w_l)}{w_l} \partial_t w_l \Big| dy dt 
 \leq   \int_0^\tau \hspace{-0.1 cm} \delta \Big[   \| \partial_y F_l(w_l)\|^6_{L^2(\Omega_l^\ast)} +  \| \partial_t w_l\|^2_{L^2(\Omega_l^\ast)} \Big] dt  +  C_\delta \| \tilde s^\prime\|^3_{L^3(0, \tau)} \\
 & \hspace{ 5.2 cm }  \leq  \delta \Big[   \| \partial_y F_l(w_l)\|^2_{L^2(\Omega_{l,\tau}^\ast)} +  \| \partial_t w_l\|^2_{L^2(\Omega_{l,\tau}^\ast)} \Big]  +  C_\delta \| \tilde s^\prime\|^3_{L^3(0, \tau)}, \\
& \int_{\Omega_{A, \tau}^\ast}  \Big| J_A(\tilde s_1)  G_A(w_A) \frac{\partial_t F_A(w_A)}{w_A}\Big|  dy dt 
 \leq  C_\delta \tau + \delta \| \partial_t w_A\|^2_{L^2(\Omega_{A, \tau}^\ast)} ,  
\end{aligned} 
$$ 
and 
$$
\begin{aligned} 
&  \int_{\Omega_{l,\tau}^\ast}  \Big|\frac 12 \frac{R_l^2(\tilde s)}{w_l^2 }    |\partial_y  F_l(w_l)|^2 \partial_t  w_l 
+ \frac{R_l(\tilde s) \partial_t R_l(\tilde s)}{w_l} |\partial_y  F_l(w_l)|^2 \Big| dy dt
\leq C_\delta \| \tilde s^\prime\|^3_{L^3(0,\tau)}   \\ 
&\qquad \;  + \delta \big[ \|\partial_t w_l \|^2_{L^2(\Omega_{l,\tau}^\ast)}  +   \|\partial_y  F_l(w_l)\|^2_{L^2(\Omega_{l,\tau}^\ast)}+1 \big] 
 +  \int_0^\tau \| \partial_y  F_l(w_l)\|^2_{L^4(\Omega_{l}^\ast)} \|\partial_t w_l \|_{L^2(\Omega_{l}^\ast)}  dt 
\\  
&  \leq  
 C_\delta \| \tilde s^\prime\|^3_{L^3(0,\tau)} 
 + \delta \big[ \|\partial_t w_l \|^2_{L^2(\Omega_{l,\tau}^\ast)} +   \|\partial_y  F_l(w_l)\|^2_{L^2(\Omega_{l,\tau}^\ast)}+1\big] 
 +\widetilde C_\delta  \int_0^\tau \| \partial_y  F_l(w_l)\|^6_{L^2(\Omega_{l}^\ast)} dt, 
\end{aligned} 
$$
for $\tau \in (0, T_1]$.   
Choosing  $\delta>0$ sufficiently small, applying the Gronwall inequality, and considering $T_1$ such that  
$$
T_1 \leq \min\limits_{l=A,B} \frac{\eta_l}{8 C_\delta \rho_l^{\rm M}(\|\partial_y  F_l(1/\rho_l^0)\|^4_{L^2(\Omega^\ast_l)}+1)}, 
$$
we obtain the following estimates for $w_A$ and $w_B$ 
 \begin{equation}\label{estim_23}
\sum_{l=A,B}\Big[ \|\partial_t w_l\|^2_{L^2(\Omega_{l, T_1}^\ast)} + \|\partial_y F_l(w_l)\|^2_{L^\infty(0, T_1; L^2(\Omega_{l}^\ast))}\Big]
    \leq  C+  C_\delta \big[\| \tilde s_1^\prime\|^3_{L^3(0, T_1)}   +  \| \tilde  s_2^\prime\|^3_{L^3(0, T_1)}\big] . 
  \end{equation} 
  The estimate for  $\|\partial_t \partial_y w_l\|_{L^2(0,T_1; H^{-1}(\Omega_l^\ast))}$ in terms of
$ \|\partial_y F_l(\widetilde w_l)\|_{L^4(\Omega_{l, T_1}^\ast)}$ and  $\|\partial_t \widetilde w_l\|_{L^2(\Omega_{l, T_1}^\ast)}$
  follows  directly from  differentiating the equation for $w_l$ with respect to $y$  and using  the boundedness of  $\|\partial_y^2 w_l\|_{L^2(\Omega_{l, T_1}^\ast)}$ and  $\|\partial_y F_l( w_l) \|_{L^\infty(0, T_1; L^2(\Omega_{l}^\ast))}$,  which is ensured by  \eqref{estim_22} and \eqref{estim_H2}.
Using \eqref{estim_23}  and  \eqref{estim_H2} and differentiating  the equation for $w_l$ in  \eqref{transformed_free_bp_1} with respect to $y$, while considering $w_l$ instead of $\widetilde w_l$, gives 
\begin{equation}\label{estim_25}
 \sum_{l=A,B}\Big[ \|\partial_t\partial_y w_l\|^2_{L^2(0,T_1; H^{-1}(\Omega_{l}^\ast))} + \|\partial_y^2 w_l\|^2_{ L^2(\Omega_{l,T_1}^\ast)}\Big]
    \leq  C+  C_\delta \big[\| \tilde s_1^\prime\|^3_{L^3(0, T_1)}   +  \| \tilde  s_2^\prime\|^3_{L^3(0, T_1)}\big].
\end{equation} 
Thus for a sufficiently small $T_1$, or small initial data,  $(w_A, w_B) = K(w_A, w_B)$ is uniformly bounded in  $\mathcal W_A\times \mathcal W_B$ and $\partial_y w_l$  in $\mathcal V_l$, for $l=A,B$, where 
$$\mathcal V_l=\big\{ u\in L^2(0,T_1; H^1(\Omega_l^\ast))\cap L^\infty(0, T_1; L^2(\Omega_l^\ast)), \; \partial_t  u \in L^2(0,T_1; H^{-1}(\Omega_l^\ast))\big\}, \quad l=A,B.$$ 
  The Aubin-Lions lemma, along with the Sobolev embedding theorem, ensures that $\mathcal V_l$
is a compact  subset of $L^2(\Omega_{l, T_1}^\ast)$ and of $L^2(0, T_1; C(\overline\Omega_{l}^\ast))$, for $l= A,B$.  Using inequality \eqref{estim_GN_L4} we also obtain that the embedding $\mathcal V_l \subset L^6(0, T_1; L^{4}(\Omega_l^\ast))$ is compact.  Thus applying the Schauder fixed point theorem, see \emph{e.g.}\ \cite{Schauder}, gives that for a given pair $(s_1, s_2) \in \mathcal W_s$ there exists  a solution of problem  \eqref{transformed_free_bp_11} and \eqref{transformed_free_bp_transm_12}  for $t\in(0, T_1]$, with an appropriate choice of  $T_1>0$.

To complete the proof  we shall show that $\mathcal M:  L^{3}(0,T_1)^2 \to L^{3}(0,T_1)^2$ given by 
$$
\mathcal M (r_1, r_2) = \Big( -  \frac{\partial_y F_A(w_A( t, s_1^\ast))}{w_A}, -  \frac{\partial_y F_B(w_B( t, s_2^\ast))}{w_B}\Big), 
$$
where  $s_j(t) = s_j^\ast + \int_0^t r_j(\tau) d\tau$  for $j=1,2$, maps 
$\mathcal W^\prime_s= \{ (r_1, r_2)\in L^{3}(0,T_1)^2 :  \|r_j -  s_j^{\ast, 1} \|_{L^3(0,T_1)} \leq 1\}$ into itself and is precompact.  Considering $(r_1, r_2) \in \mathcal W^\prime_s$ we have
$$
\begin{aligned} 
 \int_0^{T_1} |\mathcal M (r_1, r_2)- (s_1^{\ast, 1}, s_2^{\ast, 1})|^3 dt \leq& \sum_{l=A, B} \delta \int_0^{T_1} \|\partial_y^2  F_l( w_l)\|^2_{L^2(\Omega_{l}^\ast)} dt
 \\ & + \sum_{l=A, B}  T_1 \big[C_\delta \sup_{(0,T)}\|\partial_y F_l( w_l) \|^6_{L^2(\Omega_l^\ast)}+C\big] \leq 1,  
\end{aligned} 
$$
for  an appropriate choice of $\delta>0$ and $T_1>0$.  
To show that $\mathcal M: L^{3}(0,T_1)^2 \to L^{3}(0,T_1)^2$ is precompact we consider two sequences $\{r_1^n\}$ and $\{r_2^n\}$ bounded in $L^3(0,T_1)$ and obtain 
\begin{equation}\label{estim_wn_1}
\begin{aligned} 
\|\partial_y F_l(w_l^n)\|_{L^\infty(0,T_1; L^2(\Omega_{l}^\ast))}+ \|\partial_y^2 F_l(w_l^n)\|_{L^2((0,T_1)\times\Omega_{l}^\ast)}\qquad\\ + \|\partial_t w_l^n\|_{L^2(\Omega_{l, T_1}^\ast)} + \|\partial_t \partial_y F_l(w_l^n)\|_{L^2(0,T_1; H^{-1}(\Omega_{l}^\ast))}   \leq C, 
\end{aligned} 
\end{equation}
for $l=A,B$, with a constant $C$  independent of $n$. Using the fact that the embedding $H^1(\Omega_l^\ast) \subset C (\overline \Omega_l^\ast)$ is compact  and applying the Aubin-Lions lemma we obtain  the strong convergence  $w_l^n \to w_l$ in $L^p(0,T_1; C(\overline \Omega_l^\ast))$, for any $1<p< \infty$, and   $ \partial_y F_l(w_l^n) \to  \partial_y F_l(w_l)$ in $L^2(0,T_1; C (\overline \Omega_l^\ast))$ as $n \to \infty$. 
This combined with the estimate \eqref{estim_wn_1} ensures that
\begin{eqnarray}\label{conveg}
&& \int_0^{T_1} \Big\|\frac {\partial_y F_l(w_l^n)} {w_l^n} - \frac {\partial_y F_l(w_l)} {w_l}\Big\|_{L^\infty(\Omega_l^\ast)}^3 dt 
\leq C_1  \int_0^{T_1} \big\|\partial_y F_l(w_l^n)- \partial_y F_l(w_l) \big\|_{L^\infty(\Omega_l^\ast)}^3 dt 
\nonumber \\ 
&& \hspace{5. cm } + C_2 \int_0^{T_1} \|\partial_y F_l(w_l)\|_{L^\infty(\Omega_l^\ast)}^3\|w_l^n- w_l \big\|_{L^\infty(\Omega_l^\ast)}^3 dt 
\\ 
&& \leq C_3
\Big[ \int_0^{T_1} \big\|\partial_y F_l(w_l^n)- \partial_y F_l(w_l) \big\|_{L^\infty(\Omega_l^\ast)}^2 dt\Big]^{\frac 12 } 
 + C_4 \Big[ \int_0^{T_1}\|w_l^n- w_l \big\|_{L^\infty(\Omega_l^\ast)}^{12} dt \Big]^{\frac 14} \to 0, \nonumber
\end{eqnarray} 
as $n \to \infty$, where we used the fact that
$$
\begin{aligned}
\int_0^{T_1} \|\partial_y F_l(w_l^n) - \partial_y F_l(w_l)\|_{L^\infty(\Omega_l^\ast)}^4 dt 
\leq  & \int_0^{T_1} \Big[\|\partial_y^2 F_l(w_l^n)\|_{L^2(\Omega_l^\ast)}^2  \|\partial_y F_l(w_l^n)\|_{L^2(\Omega_l^\ast)}^2 \\ 
& + \|\partial_y^2 F_l(w_l)\|_{L^2(\Omega_l^\ast)}^2\|\partial_y F_l(w_l)\|_{L^2(\Omega_l^\ast)}^2 \Big]dt 
\leq C. 
\end{aligned} 
$$
The  convergence in \eqref{conveg} implies the strong  convergence $\mathcal M(r^n_1, r^n_2) \to \mathcal M(r_1, r_2)$  in $L^3(0,T_1)^2$ as $n \to \infty$. Hence, we have proved the existence of a solution of  problem~\eqref{transformed_free_bp_11}-\eqref{eq_dsj_11} in $(0, \hat T)\times \Omega_l^\ast$ with 
$
\hat T =\min\{ T_1,  T_2\},
$
where 
$$
\begin{aligned} 
T_1& = \min \big\{ \big[ \big((s^\ast_1-s_0)/ 8\big)^{\frac 32} \big(1+ \|s_1^{\ast, 1}\|_{L^2(0,T_1)})^{-\frac 32}\big ] , \big[ \big((s^\ast_2-s_1^\ast)/ 8\big)^{\frac 32} \big(1+ \|s_2^{\ast, 1}\|_{L^2(0,T_1)})^{-\frac 32}\big ] \big \}, \\
 T_2& = \min\limits_{l=A,B} \frac{\eta_l}{ 8 C_\delta \rho_l^{\rm M} (\|\partial_x  F_l(1/\rho_l^0)\|^4_{L^2(\Omega^\ast_l)}+1)}.
 \end{aligned}
$$ 
Now we  show that $s^\prime_1(t)$ and $s^\prime_2(t)$  are uniformly bounded, which will allow us to iterate  over successive time intervals and obtain that $\hat T \leq T_2$.  The uniform boundedness of $\rho_B$,  the assumptions on $F_B$, and equations \eqref{weak_sol_free_bp_22} ensure that $s^\prime_2(t)$ is uniformly bounded.
To show the boundedness of  $s_1^\prime(t)$,  we  consider the original problem  \eqref{eq:contin_110}, \eqref{eq:contin_11}, \eqref{eq:two_popul_model12}, and \eqref{initial_c} and  apply the comparison principle to the following problem for   $v= \eta_A^{-1} F_A(1/\rho_A)$: 
\begin{equation}\label{prob_v}
\begin{aligned} 
&\partial_t v = \tilde D_A(v) \partial_x^2 v + \tilde D_A(v)\tilde G_A(v) && \text{ in } \; (s_0, s_1(t)),  \; && t>0,    \\
& \partial_x v(t, x) = 0 \;  && \text{ for}\;  x= s_0, \; \; && t >0,   \\
& v (t, x) =  \frac 1 {\eta_B}  F_B\Big(\frac 1{\rho_B}\Big)  \; && \text{ for} \;  x= s_1(t), \; \; &&  t>0, \\
& v(0, x) =\frac 1{ \eta_A} F_A\Big(\frac 1{\rho_A^0} \Big) && \text{ in } \; (s_0, s_1^\ast), 
\end{aligned} 
\end{equation}
where $\tilde G_A(v) = G_A([F_A^{-1}(\eta_A v)]^{-1})$ and $ \tilde D_A(v)   = D_A([F_A^{-1}(\eta_A v)]^{-1})$. 

Consider the interval  $(s_1(t) -\delta,  s_1(t))$,  with $t>0,$ and the function  
$$\omega(t,x) = \frac 1 {\eta_B} F_B\Big(\frac 1{\rho_B^{\rm M}}\Big) +  \frac 1 {\eta_A}  F_A\Big(\frac 1{\rho^{\rm M}_A}\Big) \Big[\frac 2 {\delta} (s_1(t)  - x) - \frac 1 {\delta^2}(s_1(t) - x)^2 \Big], $$ 
for some $\delta>0$. A similar idea was used in~\cite{DuLin2010}. Since $F_A$ and $F_B$ are monotonically decreasing functions, we obtain  
$$
\begin{aligned} 
& \omega(t, s_1(t)) = \frac 1 {\eta_B}  F_B\Big(\frac 1{\rho_B^{\rm M}}\Big) \geq  \frac 1 {\eta_B}  F_B\Big(\frac 1{\rho_B(t, s_1(t))}\Big) = v(t, s_1(t)), 
\\
& \omega(t, s_1(t)-\delta) = \frac 1 {\eta_B} F_B\Big(\frac 1{\rho_B^{\rm M}}\Big) + \frac 1 {\eta_A} F_A\Big(\frac 1{\rho_A^{\rm M}} \Big) \geq 
 \frac 1 {\eta_A} F_A\Big(\frac 1{\rho_A(t,s_1(t) - \delta)} \Big) = v(t, s_1(t) - \delta). 
\end{aligned} 
$$
For the derivatives of $\omega$, since $s^\prime_1(t) \geq 0$,  we have 
$$
\begin{aligned} 
& \partial_t \omega(t,x) = \frac 2 \delta \frac 1 {\eta_A}  F_A\Big(\frac 1{\rho_A^{\rm M}} \Big) s^\prime_1(t) \Big[1- \frac {s_1(t)-x} \delta\Big] \geq 0 \quad \text{ for } x \in [s_1(t) - \delta, s_1(t)], \; t \geq 0,  \\
& \partial_x \omega(t,x) =   \frac 2 \delta \frac 1 {\eta_A} F_A\Big(\frac 1{\rho_A^{\rm M}} \Big)\Big[ \frac {s_1(t) - x}\delta - 1\Big], \qquad 
 \partial_x^2 \omega(t,x) = - \frac 2{\delta^2}  \frac 1 {\eta_A} F_A\Big(\frac 1{\rho_A^{\rm M}} \Big). 
\end{aligned} 
$$
Using the assumptions on $G_A$, for $\delta>0$ sufficiently small we obtain 
$$
\begin{aligned} 
\partial_t (\omega - v) - \tilde D_A(v) \partial^2_x(\omega - v)
\geq  \tilde D_A(v) \Big[ \frac 2{\delta^2}  \frac 1 {\eta_A} F_A\Big(\frac 1{\rho_A^{\rm M}} \Big) - \tilde G_A(v)\Big] \geq 0. 
\end{aligned} 
$$
Since  $F_A$ is continuous  and $\eta^{-1}_A F_A(1/\rho_A^0)=  \eta^{-1}_B F_B(1/\rho_B^0)$ at $x = s_1^\ast$, there exists a sufficiently small  $\delta>0$ such that 
$$
\omega(0, x) \geq v(0, x)  \quad \text{ for } \; x\in [s_1^\ast - \delta, s_1^\ast].
$$
Then applying  the comparison principle for parabolic equations gives
$$
\omega(t,x) \geq v(t,x) \quad \text{ for }  t \in (0, T) \text{ and } x\in  [s_1(t) - \delta, s_1(t)].
$$
Hence we have 
$$
- \frac 2 \delta \frac 1 {\eta_A} F_A\Big(\frac 1{\rho_A^{\rm M}} \Big)= \partial_x \omega  \leq   \partial_x v = \frac 1 {\eta_A}  \partial_x F_A\Big(\frac 1{\rho_A} \Big) \leq  0  \quad \text{ at } x=s_1(t)
$$
and for some sufficiently small  fixed  $\delta >0$ 
$$
0\leq \frac {ds_1(t)}{dt} \leq \frac 2 {\delta \rho_A^{\rm eq}}  \frac 1 {\eta_A}  F_A\Big(\frac 1{\rho_A^{\rm M}}\Big)  \; \; \text{ in } \; \;  (0, T). 
$$
Therefore, provided that $\partial_x F_l(1/\rho_l)$ is uniformly bounded in $L^\infty(0, T; L^2(\Omega_l(t)))$, for $l=A,B$,  the uniform boundedness of $s_1^\prime$ and  $s_2^\prime$ allows us to iterate over successive time intervals and prove the existence of a global solution of the free-boundary problem  \eqref{eq:contin_110}, \eqref{eq:contin_11}, \eqref{eq:two_popul_model12} and \eqref{initial_c}.

Thus, as next we prove the uniform boundedness of $\|\partial_x F_l(1/\rho_l)\|_{L^\infty(0, T; L^2(\Omega_l(t)))}$, or equivalently of $\|\partial_y F_l(w_l)\|_{L^\infty(0, T; L^2(\Omega_l^\ast))}$, for $l=A,B$. First we show higher regularity of the solutions of  problem~\eqref{transformed_free_bp_11}-\eqref{eq_dsj_11} by differentiating the equations in   \eqref{transformed_free_bp_11} with respect to the time variable  and considering  $\phi = \partial_t^2 F_A(w_A) /  w_A$  and $\psi = \partial_t^2 F_B(w_B) /  w_B$  as   test functions, respectively, 
$$
\begin{aligned} 
\sum_{l=A,B}\Big\{  \int_{\Omega_{l, \tau}^\ast}  \hspace{-0.2 cm} \Big( \frac {J_l  D_l} {w_l}   |\partial_t^2  w_l|^2 + 
\Big[J_l D^\prime_l \frac{ |\partial_t w_l|^2}{w_l}  + \partial_t J_l D_l \frac{ \partial_t w_l}{ w_l} \Big] \partial_t^2 w_l 
+ \partial_t J_lD_l^\prime  \frac{(\partial_t w_l)^3} { w_l} \Big) dy dt \\
 +\frac 12  \int_0^\tau \hspace{-0.1 cm }  \frac { d} {dt}  \int_{\Omega_l^\ast}  \frac{R_l^2(s) }{w_l}    | \partial_y  \partial_t F_l(w_l)|^2 dy dt - \int_{\Omega_{l, \tau}^\ast}  \hspace{-0.1 cm}\partial_y (R_l \partial_t R_l \partial_y F_l(w_l)) \frac{\partial_t^2 F_l(w_l)}{w_l} dy dt   \\
+\int_{\Omega_{l, \tau}^\ast} \hspace{-0.1 cm} \Big[  |\partial_y \partial_t  F_l(w_l)|^2  \frac 12  \Big(  \frac{R_l^2}{w_l^2} \partial_t  w_l -  \frac{\partial_t R_l^2(s)}{ w_l} \Big)
- \frac{ R_l^2(s) }{w_l^2}  \partial_y\partial_t  F_l(w_l)  \partial_t^2 F_l(w_l) \partial_y  w_l \Big] dy dt
\\ 
 + \int_{\Omega_{l, \tau}^\ast}  \hspace{-0.2 cm} \partial_t \big(  Q_l(s^\prime)  \partial_y F_l(w_l)\big) \frac{ \partial_t^2 F_l(w_l)}{w_l}  dy  dt  \Big\}
   -  \int_{\Omega_{A, \tau}^\ast} \hspace{-0.3 cm} \partial_t G_A(w_A)   \frac{ \partial_t^2 F_A(w_A)}{w_A}  dy  dt 
\\
  = \int_0^\tau \hspace{-0.1 cm } \frac{\partial_t\partial_y F_A(w_A)}{w_A} \partial_t^2 F_A(w_A)\Big|_{y= s_1^\ast} dt 
  + \int_0^\tau  \frac{\partial_t\partial_y F_B(w_B)}{w_B} \partial_t^2 F_B(w_B) \Big|_{s_1^\ast}^{s_2^\ast} dt  .   
\end{aligned} 
$$
The second term in the equation above can be estimated as 
$$
\begin{aligned} 
 \int_{\Omega^\ast_{l,\tau}} |\partial_t w_l|^2 & |\partial_t^2 w_l| dy dt  \leq 
\delta \| \partial_t^2 w_l\|_{L^2(\Omega_{l, \tau}^\ast)}^2 + C_\delta \|\partial_t w_l\|_{L^4(\Omega_{l, \tau}^\ast)}^4 
\\ & \leq \delta \| \partial_t^2 w_l\|_{L^2(\Omega_{l, \tau}^\ast)}^2 + C_\delta \int_0^\tau\Big[ \|\partial_t \partial_y F_l(w_l)\|^2_{L^2(\Omega_{l}^\ast)} \|\partial_t w_l\|^2_{L^1(\Omega_{l}^\ast)} +  \|\partial_t w_l\|^4_{L^1(\Omega_{l}^\ast)}\Big]dt, 
\end{aligned} 
$$
for $\tau \in (0, T]$. For the third and fourth  terms we have  
$$
\begin{aligned} 
 \int_{\Omega_{l, \tau}^\ast}\big[ |\partial_t w_l| |s^\prime| |\partial_t^2 w_l| +  |  \partial_t w_l|^3  | s^\prime|  \big] dy dt \leq 
  \delta \Big[ \|\partial_t \partial_y w_l\|^2_{L^2(\Omega_{l,\tau}^\ast)} 
 +   \|\partial_t^2 w_l\|^2_{L^2(\Omega_{l,\tau}^\ast)} \Big]
 \\ + C_\delta \Big[ \| s^\prime\|^2_{L^\infty(0,\tau)}  \| \partial_t w_l\|^2_{L^2(\Omega_{l, \tau}^\ast)} +  \| \partial_t w_l\|^{\frac 5 2}_{L^2(\Omega_{l,\tau}^\ast)}  \| s^\prime\|^3_{L^3(0,\tau)} \Big]. 
\end{aligned} 
$$
Moreover,
$$
\begin{aligned} 
 \int_{\Omega_{l, \tau}^\ast}  \hspace{-0.1 cm} \Big( |\partial_y \partial_t  F_l(w_l)|^2 \Big| \frac{R_l^2(s)}{w_l^2} \partial_t  w_l -  \frac{\partial_t R^2_l(s)}{ w_l} \Big|  +  \frac{ R_l^2(s) }{w_l^2}  \big| \partial_y\partial_t  F_l(w_l)   \partial_y  w_l \big| |\partial_t^2 F_l(w_l)| \Big) dy dt  \quad  
\\
 \leq   C_\delta \int_0^\tau  \hspace{-0.1 cm} \|\partial_y \partial_t F_l(w_l)\|_{L^2(\Omega_l^\ast)}^{2} 
\big[\|\partial_t w_l\|^{\frac 4 3}_{L^2(\Omega_l^\ast)} 
+  \|  \partial_y  w_l\|_{L^2(\Omega_{l}^\ast)}^2\|\partial_t w_l\|^{\frac 2 3}_{L^2(\Omega_l^\ast)} +  \|  \partial_y  w_l\|_{L^2(\Omega_{l}^\ast)}^{\frac 83}\\ 
 + |s^\prime| +1 \big] dt + \delta  \|\partial_t^2 F_l(w_l) \|_{L^2(\Omega_{l, \tau}^\ast)}^2.
\end{aligned} 
$$
We estimate the next terms as
$$
\begin{aligned} 
 \int_{\Omega_{l, \tau}^\ast} \Big[\big|\partial_y (R_l \partial_t R_l \partial_y F_l(w_l))\big| + 
  \big|\partial_t(  Q_l(s^\prime)  \partial_y F_l(w_l))\big|\Big]
 \frac{|\partial_t^2 F_l(w_l)|}{w_l}  dy dt  
\leq \delta \|\partial_t^2 F_l(w_l)\|_{L^2(\Omega_{l,\tau}^\ast)}^2\\
 + C_\delta \int_0^\tau [|s^\prime|^2 + |s^{\prime \prime}|^2]  \| \partial_y F_l(w_l)\|^2_{L^2(\Omega_l^\ast)} +
|s^\prime|^2 \big( \| \partial_y^2 F_l(w_l)\|^2_{L^2(\Omega_l^\ast)} +  \|\partial_y\partial_t  F_l(w_l) \|_{L^2(\Omega_l^\ast)}^2 \big) dt. 
\end{aligned} 
$$
The reaction term is estimated by 
$$
\begin{aligned}  
\delta \|\partial_t^2 F_A(w_A)\|_{L^2(\Omega_{1, \tau}^\ast)}^2
 +  C_\delta \int_0^\tau \hspace{-0.1 cm } \Big[ |s_1^{\prime}|^2  + (1 +|s_1|^2) \|\partial_t w_A\|^2_{L^2(\Omega_A^\ast)} \Big] dt  . 
\end{aligned} 
$$
For the non-zero contributions from the boundary terms we have 
$$
\begin{aligned} 
\int_0^\tau \partial_y F_l(w_l)  \frac{ \partial_t w_l }{w_l^2} \partial_t^2 F_l(w_l)\Big|_{y=s_1^\ast} dt& =
\frac{\partial_y w_l(t, s_1^\ast)}{2 w_l^2(t, s_1^\ast)} |\partial_t F_l(w_l(t, s_1^\ast))|^2 \Big |_0^\tau 
 \qquad   \\
 -  \frac 12  \int_0^\tau &\Big[\frac{\partial_y\partial_t  w_l} {w_l^2}  - \frac{\partial_y w_l \partial_t w_l}{w_l^3} \Big]   |\partial_t F_l(w_l)|^2  \Big|_{y= s_1^\ast} dt  = J_1 +J_2
\end{aligned} 
$$
for $l=A, B$, where  
$$
\begin{aligned} 
|J_1|& \leq  \delta \big[  \|\partial_t \partial_y F_l(w_l(\tau))\|^2_{L^2(\Omega_l^\ast)}  
+  \|\partial_t^2 w_l\|^2_{L^2(\Omega_{l, \tau}^\ast)} \big] +  C_\delta \big[ \|\partial_y w_l\|^4_{L^\infty(0, \tau; L^2(\Omega_{l}^\ast))}  \|\partial_t w_l\|^6_{L^2(\Omega_{l, \tau}^\ast)}+1\big], 
\\
|J_2|
& \leq  \delta \| \partial_t^2 F_l(w_l) \|^2_{L^2(\Omega_{l, \tau}^\ast)} +  C_\delta  \|\partial_y w_l\|^2_{L^\infty(0, \tau; L^2(\Omega_{l}^\ast))}\big[    \|\partial_t w_l\|^2_{L^2(\Omega_{l, \tau}^\ast)}+  \|\partial_y w_l\|^2_{L^2(\Omega_{l, \tau}^\ast)}+ 1\big]\\
 & \quad +  C \int_0^\tau
    \|\partial_y\partial_t  F_l(w_l)\|^{2}_{L^2(\Omega_{l}^\ast)}\big[\|\partial_t  F_l(w_l)\|^{\frac 4 3}_{L^2(\Omega_{l}^\ast)} +\delta \|\partial_t  F_l(w_l)\|^{2}_{L^2(\Omega_{l}^\ast)}\big]dt , 
\end{aligned} 
$$
and 
$$
\begin{aligned} 
\int_0^\tau \frac{\partial}{\partial t} \frac{\partial_y F_B(w_B)}{w_B} \partial_t^2 F_B(w_B) \Big|_{y= s_2^\ast} dt 
 = -  |\partial_t F_B(w_B(\tau, s_2^\ast))|^2 + |\partial_t F_B(w_B(0, s_2^\ast))|^2 . 
\end{aligned} 
$$
Hence, applying the Gronwall inequality and using the estimates  \eqref{estim_23} and \eqref{estim_25} yields 
\begin{eqnarray}\label{estim_dtt}
&& \sum_{l=A,B}\|\partial_t^2 w_l \|_{L^2(\Omega_{l,\hat T}^\ast)} + \|\partial_y \partial_t F_l(w_l)\|_{L^\infty(0, \hat T; L^2(\Omega_l^\ast))} 
 \leq C_1  
\\ && + C_2\sum_{l=A,B} \big[ \|s_1^{\prime\prime}\|_{L^2(0, \hat T)} +   \|s_2^{\prime\prime}\|_{L^2(0, \hat T)}\big]\exp(C_3[1+ \|\partial_y w_l\|^{\frac 83}_{L^\infty(0, \hat T; L^2(\Omega_{l}^\ast))} + \|\partial_t w_l\|^2_{L^2(\Omega_{l,\hat T}^\ast)}]), 
\nonumber
\end{eqnarray}
with constants $C_j$, for $j=1,2,3$, depending on $\|s_1^\prime\|_{L^\infty(0, \hat T)}$ and $\|s_2^\prime\|_{L^\infty(0, \hat T)}$. 
 
Using the Gagliardo-Nirenberg inequality and boundedness of $\partial_y F_A(w_A(t, s_1^\ast))$ we obtain 
$$
\begin{aligned} 
\|s^{\prime\prime}_1 \|^2_{L^2(0,\tau)}\le C_1\Big[ \|\partial_t\partial_y F_A(w_A)\|^2_{L^2(0,\tau; L^\infty(\Omega_A^\ast))} + 
\|\partial_t w_A \|^2_{L^2(0,\tau; L^\infty(\Omega_A^\ast))} \|\partial_y F_A(w_A(\cdot, s_1^\ast))\|^2_{L^2(0,\tau)}\Big]
\\  \le C_2  \Big[\|\partial_t\partial_y F_A(w_A)\|_{L^2(\Omega_{A, \tau}^\ast)}   \|\partial_t\partial_y^2 F_A(w_A)\|_{L^2(\Omega_{A, \tau}^\ast)}  +  \|\partial_t\partial_y F_A(w_A)\|^2_{L^2(\Omega_{A, \tau}^\ast)}   +  \|\partial_t w_A\|^2_{L^2(\Omega_{A, \tau}^\ast)} \Big]\\ \le \delta  \|\partial_t\partial_y^2 F_A(w_A)\|^2_{L^2(\Omega_{A, \tau}^\ast)} + C_\delta  \|\partial_t\partial_y F_A(w_A)\|^2_{L^2(\Omega_{A, \tau}^\ast)} + C_3  \|\partial_t w_A\|^2_{L^2(\Omega_{A, \tau}^\ast)} .
\end{aligned} 
$$
Differentiating \eqref{transformed_free_bp_11} with respect to the time variable and using definition of $J_A$, $R_A$, and $Q_A$ yields
$$
\begin{aligned} 
\|\partial_t \partial_y^2 F_A(w_A)\|^2_{L^2(\Omega^\ast_{A, \tau})} \le C\Big[ \|\partial_t^2  w_A\|^2_{L^2(\Omega^\ast_{A, \tau})} + 
\|(|s_1|+s^\prime_1) \partial_t \partial_y F_A(w_A)\|^2_{L^2(\Omega^\ast_{A,\tau})} + \|s^\prime_1  w_A\|^2_{L^2(\Omega^\ast_{A,\tau})}  \\ 
+ \|(|s_1|+ s^\prime_1) \partial_t  w_A\|^2_{L^2(\Omega^\ast_{A,\tau})}
 +  \|s^\prime_1 (|\partial_y^2  F_A(w_A)| + |\partial_y F_A(w_A)|)\|^2_{L^2(\Omega^\ast_{A,\tau})}+   \|s^{\prime\prime}_1 \partial_y  w_A \|^2_{L^2(\Omega^\ast_{A,\tau})}\Big].
\end{aligned} 
$$
The last term is  estimated by
$$
\begin{aligned} 
& \|s^{\prime\prime}_1 \partial_y  w_A\|_{L^2(\Omega^\ast_{A,\tau})}\ \le \|\partial_y  w_A\|^2_{L^\infty(0,\tau; L^2(\Omega^\ast_{A}))}
 \|s^{\prime\prime}_1 \|^2_{L^2(0,\tau)} 
 \le \delta  \|\partial_t\partial_y^2 F_A(w_A)\|^2_{L^2(\Omega_{A,\tau}^\ast)} \\ 
& + C_\delta \big[ \|\partial_y  w_A\|^4_{L^\infty(0,\tau; L^2(\Omega^\ast_{A}))}
  +  \|\partial_y  w_A\|^2_{L^\infty(0,\tau; L^2(\Omega^\ast_{A}))} \big]
 \big[ \|\partial_t\partial_y F_A(w_A)\|^2_{L^2(\Omega_{A,\tau}^\ast)} +  \|\partial_t w_A\|^2_{L^2(\Omega_{A, \tau}^\ast)} \big]. 
\end{aligned} 
$$
Similar estimates, using the boundedness of $\partial_y F_B(w_B(t, s_2^\ast))$, hold for $\partial_t \partial_y^2 F_B(w_B)$ and $s^{\prime\prime}_2$. 
Thus choosing $\delta>0$ sufficiently small  and using the boundedness of $s^\prime_1$  and $s^{\prime}_2$ we obtain 
$$
\begin{aligned} 
&\|\partial_t \partial_y^2 F_l(w_l)\|^2_{L^2(\Omega^\ast_{l, \tau})} \le C_1\Big[ \|\partial_t^2  w_l\|^2_{L^2(\Omega^\ast_{l,\tau})} 
+     \|\partial_y  w_l\|^2_{L^\infty(0,\tau; L^2(\Omega^\ast_{l}))}\Big] + C_2 \tau
\\ & \; \; \; \; \;  + C_3(1+ \|\partial_y  w_l\|^4_{L^\infty(0,\tau; L^2(\Omega^\ast_{l}))})
 \big[ \|\partial_t\partial_y F_l(w_l)\|^2_{L^2(\Omega_{l,\tau}^\ast)} +  \|\partial_t  w_l\|^2_{L^2(\Omega^\ast_{l,\tau})} \big] 
  \end{aligned} 
$$
 and, considering the same calculations as in the derivation of \eqref{estim_dtt}, conclude 
 \begin{eqnarray}\label{estim_dtt_second}
 \sum_{l=A,B}\|\partial_t^2 w_l \|_{L^2(\Omega_{l, \hat T}^\ast)} + \|\partial_y \partial_t F_l(w_l)\|_{L^\infty(0, \hat T; L^2(\Omega_l^\ast))} 
 \leq C, 
\end{eqnarray}
where  the constant $C$ depends on $\|\partial_y  w_l\|_{L^\infty(0, \hat T; L^2(\Omega^\ast_{l}))}$,  
 $ \|\partial_t  w_l\|_{L^2(\Omega^\ast_{l, \hat T})}$ for $l=A,B$, and $\|s_j^\prime\|_{L^\infty(0, \hat T)}$  for $j=1,2$. 
 
 Hence estimates  \eqref{estim_25} and  \eqref{estim_dtt_second} imply that $\partial_y F_l(w_l) \in C(\overline \Omega_{l, \hat T}^\ast)$. Since $\partial_y F_A(w_A(t, s_1^\ast))$ and $\partial_y F_B(w_B(t, s_j^\ast))$, for $j=1,2$, are bounded, then  $\partial_y F_A(w_A)$ is  bounded in $[s_1^\ast-\delta, s_1^\ast]$   and  $\partial_y F_B(w_B)$ is  bounded in   $[s_1^\ast, s_1^\ast+\delta]$ and $[s_2^\ast-\delta, s_2^\ast]$, for $t\in [0, \hat T]$ and for a sufficiently small $\delta>0$. 
 Taking $F_l(w_l)$ as test function in \eqref{transformed_free_bp_11} and using boundedness of $\partial_y F_A(w_A(t, s_1^\ast))$ and $\partial_y F_B(w_B(t, s_j^\ast))$, for $j=1,2$, yield
 $$
  \sum_{l=A,B} \Big[\|w_l\|_{L^\infty(0, \hat T; L^2(\Omega^\ast_{l}))} + \|\partial_y F_l(w_l)\|_{L^2(\Omega^\ast_{l, \hat T})} \Big]
 \le C.
 $$ 
 Considering a cut-off function $\zeta_l\in C^2(\overline \Omega_l^\ast)$, such that $\zeta_A(y) = 1$ for $y \in [s_0, s_1^\ast -   \delta/2]$ and $\zeta_A(y) = 0$ for $y \in [s_1^\ast -   \delta/4, s_1^\ast]$ and 
 $\zeta_B(y) = 1$ for $y \in [s_1^\ast +   \delta/2, s_2^\ast - \delta/2]$ and $\zeta_B(y) = 0$ for $y \in [s_1^\ast,   s_1^\ast + \delta/4]$ and 
 $y \in [s_2^\ast- \delta/4,   s_2^\ast]$ 
  and taking $\partial_t F_l(w_l)\zeta_l^2$ as test function in \eqref{transformed_free_bp_11} gives 
 $$
\sum_{l=A,B} \Big[\|\partial_t  w_l\zeta_l\|_{L^2(\Omega^\ast_{l,  \hat T})} + \|\partial_y F_l(w_l)\zeta_l\|_{L^\infty(0, \hat T; L^2(\Omega^\ast_{l}))} \Big]
 \le C_1\big(1+\sum_{l=A,B} \|F_l(w_l)\|_{L^2(0, \hat T; H^1(\Omega^\ast_{l}))} \big)\le C_2. 
 $$
Therefore, $\|\partial_y  F_A(w_A)\|_{L^\infty(0, \hat T; L^2(\Omega^\ast_{A}))}$ and  $\|\partial_y  F_B(w_B)\|_{L^\infty(0, \hat T; L^2(\Omega^\ast_{B}))}$ are uniformly bounded and $\hat T$ can be chosen independently of the initial data.  Then iterating over $\hat T$, we can conclude that there exists a global solution of problem \eqref{eq:contin_110}, \eqref{eq:contin_11}, \eqref{eq:two_popul_model12}, \eqref{initial_c}. 

\end{proof}


\section{Travelling-wave solutions of the free-boundary problem}
\label{travelling-waves}
In this section, we carry out a travelling-wave analysis for the free-boundary problem~\eqref{eq:contin_110}, \eqref{eq:contin_11} and \eqref{eq:two_popul_model12}. 

We begin by noting that under  assumptions \eqref{e.FBo}, \eqref{e.DFBo} and \eqref{e.GFBo},  the free-boundary problem \eqref{eq:contin_110}, \eqref{eq:contin_11} and~\eqref{eq:two_popul_model12} can be written in terms of the cell pressures  $P_A$ and $P_B$ defined according to the following barotropic relation
\beq
\label{e.barotreln}
P_l(\rho_l) = 0 \; \text{ for } \; \rho_l \leq \rho^{\rm eq}_l \quad \text{and} \quad \dfrac{d P_l(\rho_l)}{d\rho_l}  = \frac{D_l(\rho_l)}{\rho_l} \; \text{ for } \; \rho_l >  \rho^{\rm eq}_l, \qquad l=A,B. 
\eeq
Under assumptions  \eqref{e.DFBo}, the barotropic relation \eqref{e.barotreln} is such that
\beq
\label{e.TWASC1FBb}
\dfrac{d P_l(\rho_l)}{d\rho_l} >0 \; \mbox{ for } \; \rho_l > \rho^{\rm eq}_l, \qquad l = A,B.
\eeq
The monotonicity conditions  \eqref{e.TWASC1FBb} allow one to write both the force terms $F_l(1/\rho_l)$ and the growth rate $G(\rho_A)$ 
as functions of the cell pressure $\tilde F_A(P_A)$, $\tilde F_B(P_B)$ and $\tilde G(P_A)$. Moreover, with the notation
\beq
\label{def.PeqPM}
P^{\rm eq}_l=P_l(\rho^{\rm eq}_l) \geq 0 \quad \text{ and }  \quad P^{\rm M}=P_A(\rho^{\rm M}) > P^{\rm eq}_A,
\eeq
the  monotonicity conditions \eqref{e.TWASC1FBb} make it possible to reformulate assumptions \eqref{e.FBo} and \eqref{e.GFBo} on the functions $F_{l}$ and $G$ as
\beq
\label{e.FTW}
\tilde F_l(P_l) = 0  \;  \; \text{ for } \; P_l(\rho_l) \leq P^{\rm eq}_l, \quad \dfrac{d \tilde F(P_l)}{d P_l}  > 0 \; \text{ for } \; P_l > P^{\rm eq}_l, \qquad l=A,B
\eeq
and
\beq
\label{e.GFB}
\tilde G(\cdot) > 0   \; \;   \text{ in } \; \; (0,  P^{\rm M}), \; \;  
\tilde G(\cdot) = 0 \; \;   \text{ in } \;   [P^{\rm M}, \infty) , \;   \;  \dfrac{d \tilde G(\cdot)}{dP_A} < 0 \; \;  \text{ in  } \;  \; (0,  P^{\rm M}].  
\eeq
Hence, under assumptions  \eqref{e.FBo}, \eqref{e.DFBo} and \eqref{e.GFBo}, using the barotropic relation  \eqref{e.barotreln}, we can rewrite the free-boundary problem  \eqref{eq:contin_110}, \eqref{eq:contin_11} and~\eqref{eq:two_popul_model12} in the following alternative form:
\begin{equation}\label{eq:continTW}
\begin{aligned}
&\partial_t \rho_A - \partial_x \big(\rho_A \, \partial_x P_A\big) = \tilde G(P_A) \rho_A \; &&\text{ for }  x \in (s_0,s_1(t)),  \quad\;\, t>0, \\
&\partial_t \rho_B - \partial_x (\rho_B \, \partial_x P_B) = 0 \; &&\text{ for } x \in (s_1(t),s_2(t)),  \; t>0, \\
&\partial_x P_A  =0 \; && \text{ at } \;  x= s_0, \\
&\frac{1}{\eta_A} \, \tilde F_A\left(P_A\right) = \frac{1}{\eta_B} \, \tilde F_B \left(P_B\right)  \; && \text{ at } \;  x= s_1(t),   \\
&\frac{ds_1}{dt}   =- \partial_x P_A \; && \text{ at } \; x=s_1(t), \\
&\frac{ds_1}{dt} ( \rho_A - \rho_B) =-\big( \rho_A \, \partial_x P_A  - \rho_B \, \partial_x P_B\big) \; && \text{ at } \; x=s_1(t), \\
&\frac{ds_2}{dt}  = \frac{1}{\eta_B} \, \tilde F_B\left(P_B\right)  - \frac 12 \partial_x P_B \; && \text{ at } \; x=s_2(t), \\
&\frac{ds_2}{dt} = - \partial_x P_B  \; && \text{ at } \; x=s_2(t).
\end{aligned}
\end{equation}
Having rewritten the problem in this form allows us to construct travelling-wave solutions using an approach that builds on the method of proof recently presented in~\cite{chaplain2019bridging, lorenzi2017interfaces}.  For the sake of brevity, in this section we  drop the tildes from all the quantities in  problem~\eqref{eq:continTW}.

We construct travelling-wave solutions of the  free-boundary problem~\eqref{eq:continTW} such that both the position of the inner free boundary, $s_1(t)$, and the position of the outer free boundary, $s_2(t)$, move at a given constant speed $c>0$.  Without loss of generality, we let $s_0$ go to $-\infty$ and consider the case where 
\beq
\label{eq:dts1dts2c0}
s_1(t) = (c + o(1))t  \; \text{ and } \; s_2(t) = \ell + (c + o(1))t,
\eeq
for some $\ell >0$, so that 
$$
s_1(0)=s^*_1=0, \quad s_2(0)=s^*_2=\ell  \quad \text{and} \quad \dfrac {ds_{1}}{dt}=\dfrac {ds_{2}}{dt} = c.
$$
We make the following travelling-wave ansatz for the cell densities $\rho_A$ and $\rho_B$
\beq
\label{e.TWansatz}
\rho_A(t,x) = \rho_A(z) \; \mbox{ and } \; \rho_B(t,x) = \rho_B(z) \; \text{ with } \; z=x - c \, t,
\eeq
which are related to the cell pressures $P_A(z)$ and $P_B(z)$ through the barotropic relation~\eqref{e.barotreln}. In this framework, substituting the travelling-wave ansatz~\eqref{e.TWansatz} into problem~\eqref{eq:continTW} we find 
\begin{equation}\label{e.TWsysFB}
\begin{aligned}
&- c \, \rho_A' = \left(\rho_A \, P_A' \right)' + G(P_A) \; \rho_A \; &&\text{ in }  -\infty < z < 0, \\
&- c \, \rho_B' = \left(\rho_B \, P_B' \right)' \; &&\text{ in } 0 < z <  \ell, \\
& \frac{1}{\eta_A} \, F_A(P_A) = \frac{1}{\eta_B} \, F_B(P_B)  \; && \text{ at } \; z=0, \\
& P_A' = - c  \; && \text{ at } \; z=0, \\
& c \, (\rho_A - \rho_B) = - \rho_A \, P_A' + \rho_B \, P_B'  \; && \text{ at } \; z=0, \\
& \frac{1}{\eta_B} \, F_B\left(P_B\right) = c + \frac 12 P'_B  \; && \text{ at } \; z=\ell,\\
& P'_B  = -c \; && \text{ at } \;  z=\ell,
\end{aligned}
\end{equation}
where $\rho'_l$ and $P'_l$ denote the derivatives of $\rho_l$ and $P_l$ with respect to the variable $z$, with $l=A,B$. We consider the case where the following condition holds 
\beq
\label{e.TWPmininf}
\rho_A(z) \xrightarrow[ z \rightarrow - \infty]{} \rho^{\rm M},
\eeq
which implies $P_A(z) \xrightarrow[ z \rightarrow - \infty]{} P^{\rm M}$. Moreover, we note that  the principle of mass conservation ensures that
\beq
\label{e.Th1MFB}
\int_0^{\ell} \rho_B(z) \; dz=M,
\eeq 
for some $M>0$. The results of our travelling-wave analysis are summarised in the following theorem.
\begin{theorem}
\label{theorem.TWFB}
Under Assumptions \ref{assum_1}(i)-(iii), for any $M>0$ given there exist $c>0$ and $\ell > 0$ such that the travelling-wave problem defined by  system  \eqref{e.TWsysFB}, complemented with the asymptotic condition~\eqref{e.TWPmininf}, admits a solution whereby:
\begin{itemize}
\item[(i)] $\rho_A(z)$ is decreasing in $(-\infty,0)$ and satisfies the condition
\beq
\label{e.Th1rhoBellFA}
\rho_A^{\rm eq} < \rho_A(0) < \rho_A(z) < \rho^{\rm M} \; \mbox{ for all } \; z \in (-\infty,0);
\eeq
\item[(ii)] $\rho_B(z)$ is decreasing in $(0,\ell)$ and satisfies the condition
\beq
\label{e.Th1rhoBellFB}
\rho^{\rm eq}_B < \rho_B(\ell) < \rho_B(z) < \rho_B(0) \; \mbox{ for all } \; z \in  (0,\ell)
\eeq
along with the condition~\eqref{e.Th1MFB}.
\end{itemize}
Moreover, in the case where $F_A(\cdot)=F_B(\cdot)$, the following jump condition holds:
\beq
\label{e.Th1jump}
\sgn\left(\rho_A(0) - \rho_B(0) \right) = \sgn\left(\eta_A - \eta_B \right). 
\eeq
\end{theorem}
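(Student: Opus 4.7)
The plan is to decouple the two populations: the B-side admits a fully explicit affine pressure profile because of the absence of proliferation, while the A-side reduces to a Fisher--KPP type travelling wave that I would analyse by phase-plane shooting. The two profiles are glued through the transmission conditions at $z=0$, and the free parameters $c$ and $\ell$ are pinned down by the resulting matching relation and the mass constraint~\eqref{e.Th1MFB}.

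On the B-side, integrating $-c\rho_B' = (\rho_B P_B')'$ on $(z,\ell)$ and using $P_B'(\ell)=-c$ gives $\rho_B(P_B'+c)\equiv 0$, hence $P_B'\equiv -c$ and $P_B(z)=P_B(0)-cz$. The boundary condition at $z=\ell$ collapses to $F_B(P_B(\ell)) = \eta_B c/2$, which by monotonicity of $F_B$ on $(P_B^{\rm eq},\infty)$ fixes $P_B(\ell) = F_B^{-1}(\eta_B c/2) > P_B^{\rm eq}$; the barotropic relation~\eqref{e.barotreln} then renders $\rho_B$ strictly decreasing and yields~\eqref{e.Th1rhoBellFB}. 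One also checks \emph{a posteriori} that the third relation in~\eqref{e.TWsysFB} at $z=0$ is automatic from $P_A'(0)=-c=P_B'(0)$. For the A-side, I would rewrite the second-order ODE as the planar system
\begin{equation*}
\rho_A' \;=\; \frac{\rho_A\,p}{D_A(\rho_A)}, \qquad p' \;=\; -\frac{p^2+cp}{D_A(\rho_A)} - G(P_A(\rho_A)), \qquad p := P_A'.
\end{equation*}
The equilibrium $(\rho^{\rm M},0)$ is a saddle (the linearisation has eigenvalues of opposite sign thanks to $G'(\rho^{\rm M-})<0$ from~\eqref{e.GFBo}); its one-dimensional unstable manifold supplies a unique trajectory with $\rho_A\to\rho^{\rm M}$, $p\to 0$ as $z\to-\infty$. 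An invariant-region argument, using that $p'|_{p=0}=-G<0$ on $(0,\rho^{\rm M})$, forces $p$ to become and stay negative and $\rho_A$ to decrease strictly; I would then argue that $p$ attains the level $-c$ at a finite $z$, normalise this to $z=0$ by translation, and verify that $\rho_A(0) =: \rho_A^{\#}(c) \in (\rho_A^{\rm eq},\rho^{\rm M})$, giving~\eqref{e.Th1rhoBellFA}, with $c \mapsto \rho_A^{\#}(c)$ continuous by smooth dependence on parameters.

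Matching and selection of $(c,\ell)$ then proceed as follows. The transmission condition $F_A(P_A(0))/\eta_A = F_B(P_B(0))/\eta_B$ combined with $P_B(0) = F_B^{-1}(\eta_B c/2) + c\ell$ implicitly defines $\ell=\ell(c)$ via
\begin{equation*}
c\,\ell(c) \;=\; F_B^{-1}\!\left(\tfrac{\eta_B}{\eta_A}\, F_A(P_A(\rho_A^{\#}(c)))\right) \;-\; F_B^{-1}\!\left(\tfrac{\eta_B c}{2}\right),
\end{equation*}
provided the right-hand side is positive, which has to be checked from the shooting output. The mass condition, after the substitution $s = P_B(0)-cz$ and introducing the B-side barotropic inverse $\sigma_B$, becomes
\begin{equation*}
\mathcal M(c) \;:=\; \frac{1}{c}\int_{P_B(\ell(c))}^{P_B(0)} \sigma_B(s)\,ds \;=\; M.
\end{equation*}
I would then examine the limiting behaviour of $\mathcal M$ at the endpoints of the admissible range of $c$ and invoke the intermediate value theorem to produce a $c>0$ solving $\mathcal M(c)=M$ for any prescribed $M>0$, determining $\ell$ from the previous display. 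Finally, in the special case $F_A=F_B=:F$, the matching reduces to $F(P_A(0))/F(P_B(0))=\eta_A/\eta_B$; since $F$ is strictly increasing on $(P^{\rm eq},\infty)$ by~\eqref{e.FTW}, both $P_A(0),P_B(0)$ lie in this range, and $\rho\mapsto P(\rho)$ is strictly increasing, the sign of $\eta_A-\eta_B$ transfers directly to the sign of $\rho_A(0)-\rho_B(0)$, yielding~\eqref{e.Th1jump}.

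The hardest part will be the A-side shooting: namely, proving that the unstable-manifold trajectory actually reaches the level $\{p=-c\}$ at a finite $z$ while keeping $\rho_A>\rho_A^{\rm eq}$ (so that the degenerate locus of $D_A$ is avoided along the whole profile), and that the resulting map $c\mapsto\mathcal M(c)$ is continuous and sweeps out all of $(0,\infty)$ as $c$ ranges over the admissible interval. Both tasks rest on a careful construction of trapping regions in the $(\rho_A,p)$-plane exploiting the sign structure of $G$ and $D_A$ near $\rho^{\rm M}$ and near $\rho_A^{\rm eq}$, together with the monotonicity information provided by the hypotheses~\eqref{e.FTW}--\eqref{e.GFB} that underlie Assumption~\ref{assum_1}.
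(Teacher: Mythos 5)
Your treatment of the B-side profile, the reduction of the boundary condition at $z=\ell$ to $F_B(P_B(\ell))=\eta_B c/2$, the observation that the density-flux condition at $z=0$ is automatic once $P_A'(0)=P_B'(0)=-c$, and the derivation of the jump condition \eqref{e.Th1jump} all coincide with Steps~1, 2 and 5 of the paper's proof. Where you genuinely diverge is on the A-population and on which parameter the mass constraint fixes. The paper integrates the $\rho_A$-equation once, imposes the value $P_A(0)$ dictated by the transmission condition as a final datum for the resulting first-order nonlocal equation, asserts existence, and obtains monotonicity and \eqref{e.Th1rhoBellFA} from a short strong-maximum-principle argument at critical points of $P_A$; consistently, it treats $c>0$ as arbitrary and determines $\ell$ directly from $M$. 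You instead shoot along the unstable manifold of the saddle $(\rho^{\rm M},0)$, which outputs $\rho_A(0)=\rho_A^{\#}(c)$ as a function of $c$, then determine $\ell=\ell(c)$ from the transmission condition and finally $c$ from \eqref{e.Th1MFB}. Your bookkeeping is in fact the more coherent one: the asymptotic condition \eqref{e.TWPmininf} forces the backward-in-$z$ trajectory to lie on a one-dimensional stable manifold, so $c$ and $P_A(0)$ cannot be prescribed independently --- a compatibility that the paper's phrase ``the above problem admits a solution'' passes over. What the paper's route buys is an almost computation-free monotonicity proof; what yours buys is an explicit description of the one-parameter family of waves and a clean selection mechanism for $(c,\ell)$.

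The gaps you flag at the end are real and constitute the remaining mathematical content: (i) that the unstable-manifold trajectory reaches $\{p=-c\}$ at finite $z$ with $\rho_A$ still above $\rho_A^{\rm eq}$ --- equivalently, using the first integral $\rho_A p=-c\rho_A+c\rho^{\rm M}-\int_{-\infty}^{z}G\rho_A\,d\xi$, that the running integral attains the value $c\rho^{\rm M}$ while $P_A$ is still above $P^{\rm eq}_A$ (this identity also gives $p>-c$ up to that moment, complementing your invariant-region bound $p<0$); (ii) positivity of the right-hand side in your formula for $c\,\ell(c)$; and (iii) continuity and surjectivity of $\mathcal M$ onto $(0,\infty)$. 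Note that the paper does not settle the analogues of these points either --- in particular it never verifies from the transmission condition that $P_A(0)<P^{\rm M}$, which its maximum-principle step tacitly assumes --- so completing them would strengthen rather than merely reproduce the published argument. One small caution: as you correctly anticipate, the hypotheses \eqref{e.GFBo} force you to classify the equilibrium $(\rho^{\rm M},0)$ as a saddle using the one-sided derivative $G'(\rho^{{\rm M}-})<0$, since $G$ vanishes identically on $[\rho^{\rm M},\infty)$.
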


\begin{proof}  
We prove Theorem~\ref{theorem.TWFB} in five steps. 
\\\\
{\it Step 1: existence of a solution of  problem~\eqref{e.TWsysFB}. For $c>0$ given, we have the following problem for $P_B(z)$ 
$$
\begin{aligned}
&- c \, \rho_B' = \left(\rho_B \, P_B' \right)' \; &&\text{ in } 0 < z <  \ell, \\
&  P_B' = - c \; && \text{ at } \; z=0, \\
& \frac{1}{\eta_B} \, F_B\left(P_B\right) = \frac c2  \; && \text{ at } \; z=\ell.
\end{aligned}
 $$
Integrating the equation for $P_B$ over $(0,z)$, with $z<\ell$, and using the condition at $z=0$ we obtain an ordinary differential equation with final condition at $z=\ell$, that is,  
 $$
 \begin{aligned}
& \rho_B \, P_B'  = - c \, \rho_B\; &&\text{ in } 0 < z <  \ell, \\
 & \frac{1}{\eta_B} \, F_B\left(P_B\right) = \frac c2  \; && \text{ at } \; z=\ell,
 \end{aligned} 
 $$
 which can be solved explicitly giving $P_B(z) = c \, (\ell- z) + F_B^{-1}\Big(\dfrac{\eta_B}2 c\Big)$. Notice that since $c>0$ we have $P_B(\ell) > P_B^{\rm eq}$ and $F_B$ is invertible.  Knowing  $P_B(z)$, we have the following problem for $P_A(z)$ 
$$
\begin{aligned}
&- c \, \rho_A' = \left(\rho_A \, P_A' \right)' + G(P_A) \; \rho_A \; &&\text{ in }  -\infty < z < 0, \\
& \frac{1}{\eta_A} \, F_A(P_A)= \frac{1}{\eta_B} \, F_B(P_B)  \; && \text{ at } \; z=0, \\
& P_A' = - c  \; && \text{ at } \; z=0.
\end{aligned}
$$
Integrating the equation for $P_A$ over $(z,0)$, with $z<0$, and using the second condition at $z=0$ we obtain an ordinary differential equation with final condition at $z=0$, that is,
 $$
 \begin{aligned}
& \rho_A \, P_A'  = - c \, \rho_A + \int_z^0  G(P_A) \; \rho_A \, d\xi \; &&\text{ in } -\infty < z <  0, \\
& \frac{1}{\eta_A} \, F_A(P_A)= \frac{1}{\eta_B} \, F_B(P_B)  \; && \text{ at } \; z=0.
 \end{aligned} 
 $$
Under Assumptions~\ref{assum_1}(i), (iii) on the functions $F_A$, $F_B$ and $G$, the above problem  admits a solution. Hence, for a given $c>0$ there exists a solution of  problem~\eqref{e.TWsysFB}.
}
\\\\
{\it Step 2: monotonicity of $\rho_B$ in $(0,\ell)$ and proof of the condition~\eqref{e.Th1rhoBellFB}.} Integrating ~\eqref{e.TWsysFB}$_2$ between a generic point $z \in [0,\ell)$ and $\ell$, and using the boundary condition \eqref{e.TWsysFB}$_7$, we find
\beq
\label{e.Pon0rFB22}
P'_B(z) = - c < 0 \; \mbox{ for all } \; z \in [0,\ell].
\eeq
Moreover, integrating~\eqref{e.Pon0rFB22} between a generic point $z \in [0,\ell)$ and $\ell$ gives
\beq
\label{e:PeqFB}
P_B(z) = c \, (\ell - z) + P_B(\ell) \; \mbox{ for } \; z \in [0,\ell].
\eeq
Therefore,
\beq
\label{e.Pon0rFB12}
P_B(0) - P_B(\ell) = c \, \ell, \quad P_B(\ell) < P_B(z) < P_B(0) \; \mbox{ for all } \; z \in  (0,\ell).
\eeq
Furthermore, note that \eqref{e.Pon0rFB22} allows us to rewrite  the boundary condition \eqref{e.TWsysFB}$_6$  as
\beq
\label{e.BC72}
F_B\left(P_B(\ell)\right) = \eta_B \, \frac{c}{2} > 0. 
\eeq
Since under assumptions~\eqref{e.FTW}  we have that $F_B\left(P_B(\ell)\right)>0$ if and only if $P_B(\ell) > P^{\rm eq}_B$, we conclude that 
\beq
\label{e.PBell}
P_B(\ell) > P^{\rm eq}_B.
\eeq
Finally, since the function $F_B\left(P_B\right)$ is monotone for $P_B > P^{\rm eq}_B$, cf. \eqref{e.FTW}, the value of $P_B(\ell)$ is uniquely determined by~\eqref{e.BC72}.

Using the results~\eqref{e.Pon0rFB22}, \eqref{e.Pon0rFB12} and \eqref{e.PBell} along with the fact that, under assumptions~\eqref{e.barotreln} and \eqref{e.TWASC1FBb}, $P_B>0$ if and only if $\rho_B > \rho^{\rm eq}_B$ and 
 $P_B$ is a monotonically increasing and continuous function of $\rho_B$ for $\rho_B > \rho^{\rm eq}_B$, we conclude that the function $\rho_B$ is continuous in $(0,\ell)$ and satisfies the following conditions
\beq
\label{e.rho'<00rFB}
\rho'_B(z) < 0 \; \mbox{ for all } \; z \in  (0,\ell)
\eeq
and
\beq
\label{e.rhobound0rFB}
\rho^{\rm eq}_B < \rho_B(\ell) < \rho_B(z) < \rho_B(0) \; \mbox{ for all } \; z \in  (0,\ell).
\eeq
\\
{\it Step 3: identification of $\ell$.}
For $M>0$ given, since the value of $\rho_B(z)$ is uniquely determined for all $z \in [0,\ell]$, the value of $\ell$ is uniquely defined by the integral identity~\eqref{e.Th1MFB}.
\\\\
\noindent {\it Step 4: monotonicity of $\rho_A$ in $(-\infty,0]$ and proof of the condition~\eqref{e.Th1rhoBellFA}.} Recalling that $\dfrac{d P_A(\rho_A)}{d\rho_A} > 0$ for $\rho_A > \rho^{\rm eq}_A$,  we multiply both sides of~\eqref{e.TWsysFB}$_1$ by $\dfrac{dP_A}{d\rho_A}$ and use assumption~\eqref{e.TWPmininf}  to obtain the following boundary-value problem for $P_A$ 
\beq
\label{e.P-inf0FB}
- P_A' \, \left(c + P_A' \right) - P_A'' \, \rho_A \, \frac{d P_A}{d \rho_A} = G\left(P_A\right) \, \rho_A \, \frac{d P_A}{d \rho_A} \quad \mbox{in} \quad (-\infty,0]
\eeq
\beq
\label{e.P-inf0BCFB}
P_A(z) \xrightarrow[ z \rightarrow - \infty]{} P^{\rm M} \quad \mbox{and} \quad P_A(0)=P_A^0.
\eeq
Let $z^* \in  (-\infty,0)$ be a critical point of $P_A$. Using~\eqref{e.P-inf0FB} we conclude that 
$$
P_A''(z^*) = - G\left(P_A(z^*)\right).
$$
Therefore, under the conditions~\eqref{e.GFB} and \eqref{e.P-inf0BCFB}, the strong maximum principle ensures that $P_A < P^{\rm M}$ in $(-\infty,0]$ and that $P_A$ cannot have a local minimum in $(-\infty,0)$, \emph{i.e.}
\beq
\label{e.Pon-inf0FB}
P_A'(z) < 0 \; \mbox{ for all } \; z \in (-\infty,0).
\eeq 
Hence the solution $P_A$ of~\eqref{e.P-inf0FB},~\eqref{e.P-inf0BCFB} is a continuous and nonincreasing function that satisfies 
\beq
\label{e.Pon-inf0FB1}
P^{\rm eq}_A <  P_A(0) < P_A(z) < P^{\rm M} \; \mbox{ for all } \; z \in (-\infty,0).
\eeq 
Using the results~\eqref{e.Pon-inf0FB} and \eqref{e.Pon-inf0FB1} along with the fact that, under assumptions \eqref{e.barotreln} and \eqref{e.TWASC1FBb}, $P_A>0$ if and only if $\rho_A > \rho^{\rm eq}$ and $P_A$ is a monotonically increasing and continuous function of $\rho_A$ for $\rho_A > \rho^{\rm eq}$, we conclude that the function $\rho_A$ is continuous in $(-\infty,0)$ and satisfies the following conditions
\beq
\label{e.rho'<0FB}
\rho'_A(z) < 0 \; \mbox{ for all } \; z \in (-\infty,0)
\eeq
and
\beq
\label{e.rhoboundFB}
\rho_A^{\rm eq} < \rho_A(0) < \rho_A(z) < \rho^{\rm M} \; \mbox{ for all } \; z \in (-\infty,0),
\eeq
with  $\rho^{\rm M}$ being related  to $P^{\rm M}$ by \eqref{def.PeqPM}.
\\\\
{\it Step 5: proof of the jump condition \eqref{e.Th1jump}.} 
The transmission conditions~\eqref{e.TWsysFB}$_4$  and \eqref{e.TWsysFB}$_5$ give
\beq
\label{e.PAPBP0FB}
P_A'(0) = -c = P_B'(0).
\eeq
Furthermore, due to the uniqueness  of the value of $P_B(0)>P^{\rm eq}_B$, under the monotonicity assumptions \eqref{e.FTW}, the transmission condition \eqref{e.TWsysFB}$_3$ allows one to uniquely determine the value of $P_A(0) > P^{\rm eq}_A$. In particular, in the case where $F_A(\cdot)=F_B(\cdot) \equiv F(\cdot)$, the transmission condition \eqref{e.TWsysFB}$_3$ gives 
$$
\frac{F\left(P_A(0)\right)}{F\left(P_B(0)\right)} = \frac{\eta_A}{\eta_B} \quad \Longrightarrow \quad \sgn\left(F\left(P_A(0)\right) - F\left(P_B(0)\right)\right) = \sgn\left(\eta_A - \eta_B \right),
$$
from which, using the monotonicity assumptions~\eqref{e.barotreln} and~\eqref{e.FTW}, one finds the jump condition~\eqref{e.Th1jump}.
\end{proof}

\section{Numerical solutions of the free-boundary problem and computational simulations for the individual-based model} 
\label{numerics}
In this section, we illustrate the results established in Theorem~\ref{theorem.TWFB} by presenting a sample of nume\-rical solutions of the free-boundary problem  \eqref{eq:contin_110}, \eqref{eq:contin_11} and \eqref{eq:two_popul_model12}. Moreover, we compare  numerical solutions of the continuum model with computational simulations for the individual-based model  \eqref{DiscreteModelReindexingEqn}-\eqref{boundary_c_11}.
%
We focus on the case where the force terms $F_{A}(d_{ij})$ and $F_{B}(d_{ij})$ are both given by the following cubic approximation of the JKR force law~\cite{johnson1971surface}
\beq
\label{e.Fij}
F(d_{ij}) = a_1(d_{ij} - d^{\rm eq}) + a_2 (d_{ij} - d^{\rm eq})^2 + a_3(d_{ij} - d^{\rm eq})^3 \; \text{ for } \; d_{ij} < d^{\rm eq},
\eeq
where $d_{ij} = |r_i - r_j|$ and $d^{\rm eq}$ stands for the equilibrium intercellular distance, which is the distance between cell centres above which cells do not exert any force upon one another (\emph{i.e.} $F(d_{ij}) =0$ for all $d_{ij} \geq d^{\rm eq}$). The equilibrium distance $d^{\rm eq}$ and the coefficients $a_1$, $a_2$ and $a_3$ depend on the biophysical characteristics of the cells and are defined as
\beq
\label{e.a123}
\begin{aligned}
&d^{\rm eq} = 2R - \frac 12 \frac{(\pi\gamma)^{2/3} (3R)^{1/3}}{\tilde E^{2/3}},  &&
a_1 = - \frac 3{5}  (3R \tilde E)^{2/3}  ( \pi \gamma)^{1/3} d^{\rm eq}, \\
&a_2 = \frac{33}{125} \frac{\tilde E^{4/3} (3R)^{1/3} }{(\pi \gamma)^{1/3}}(d^{\rm eq})^2, &&
a_3 =  \frac{209}{3125}  \frac{\tilde E^2}{\pi \gamma}(d^{\rm eq})^3.
\end{aligned}
\eeq
In the formulas \eqref{e.a123}, $R$ is the cell radius, the parameter $\gamma$ measures the strength of cell-cell adhesion and $\tilde E$ is an effective Young's modulus defined as
\beq
\label{e.tildeE}
\tilde E  = \frac{E}{2 (1- \nu^2)},
\eeq
with $E$ and $\nu$ being, respectively, the Young's modulus and the Poisson's ratio of the cells. We refer the interested reader to Appendix~\ref{JKR_approx} for a detailed derivation of the approximate representation of the JKR force law given by~\eqref{e.Fij}-\eqref{e.tildeE}. 

Using the formal relations between the intercellular distance and the cell density \eqref{eq:approxcelldensity} and \eqref{eq:approxcelldensity1}, we compute the cell densities and the equilibrium cell density  as 
\beq
\label{celldensdisc}
\begin{aligned}
& \rho_A(t,r_i) = \frac{1}{r_{i+1} - r_i} \; \;  && \text{for } \; \; i=1, \ldots, m,   \\ 
&  \rho_B(t,r_i) = \frac{1}{r_{i+1} - r_i} \; \;  &&  \text{for }\; \;  i=m+1, \ldots, n,  
\end{aligned} 
\eeq
and 
$
\rho^{\rm eq}_A = \rho^{\rm eq}_B = \rho^{\rm eq} = 1/{d^{\rm eq}}$. 
The approximation of the JKR force law \eqref{e.Fij} can be rewritten in terms of the cell densities $\rho_l$ and $\rho^{\rm eq}$ as follows
\beq
\label{e.Fijrho}
F\left(1 /\rho_l \right) = a_1 \left(\frac{1}{\rho_l} -\frac{1}{\rho^{\rm eq}}\right) + a_2 \left(\frac{1}{\rho_l} -\frac{1}{\rho^{\rm eq}}\right)^2 + a_3 \left(\frac{1}{\rho_l} -\frac{1}{\rho^{\rm eq}}\right)^3. 
\eeq
Inserting the latter expression for $F\left(1 /\rho_l \right)$ into the definition~\eqref{eq:Dl} of the nonlinear diffusion coefficient $D_l(\rho_l)$ yields 
\beq
\label{DiffCoGen}
D_l(\rho_l)=
\begin{cases}
-\dfrac{3a_3 \, (\rho^{\rm eq}- \rho_l)^2+2a_2\, (\rho^{\rm eq}-\rho_l) \, \rho_l \, \rho^{\rm eq} +a_1\, \left(\rho_l \, \rho^{\rm eq}\right)^2}{\eta_l \left(\rho^{\rm eq} \right)^2 \rho_l^4}  & \text{ if }  \rho_l > \rho^{\rm eq}, 
\\
0 \phantom{-\frac{3a_3 \, (\rho^{\rm eq}- \rho_l)^2+2a_2\, (\rho^{\rm eq}-\rho_l)\rho_l+a_1\, \rho_l^2}{\eta_l \,  \rho_l^4}}    &  \text{ if } \; \rho_l \leq \rho^{\rm eq}, 
\end{cases}
\eeq
from which, using~\eqref{e.barotreln}, we obtain the following barotropic relation for the cell pressure $P_l$
\beq
\label{JKRPressureDensity}
P_l(\rho_l)=
\begin{cases}
\dfrac{1}{\eta_l \left(\rho^{\rm eq}\right)^2 \rho_l^4}\left(\dfrac{\alpha_1}{2}\rho_l^2 + \dfrac{2 \alpha_2}{3}\rho_l+ \dfrac{3 \alpha_3}{4}\right) + P^l_0   & \text{ if } \; \rho_l \geq \rho^{\rm eq}, 
\\
0 \phantom{\left(\dfrac{\alpha_1}{2}\rho_l^2 + \dfrac{2 \alpha_2}{3}\rho_l+ \dfrac{3 \alpha_3}{4}\right) + P_0^l}    &  \text{ if } \; \rho_l < \rho^{\rm eq}. 
\end{cases}
\eeq
In~\eqref{JKRPressureDensity}, the term $P^l_0$ is an integration constant such that $P_l(\rho^{\rm eq}) =0$ and
\beq
\label{e.alpha123}
\alpha_1 =  a_1 \left(\rho^{\rm eq}\right)^2 - 2 \, a_2 \rho^{\rm eq} + 3 a_3, \quad \alpha_2 =  a_2 \left(\rho^{\rm eq}\right)^2 - 3 \, a_3 \rho^{\rm eq}, \quad \alpha_3 =  a_3 \left(\rho^{\rm eq}\right)^2.
\eeq

\begin{figure}
\begin{center}
{\includegraphics[scale=0.3] {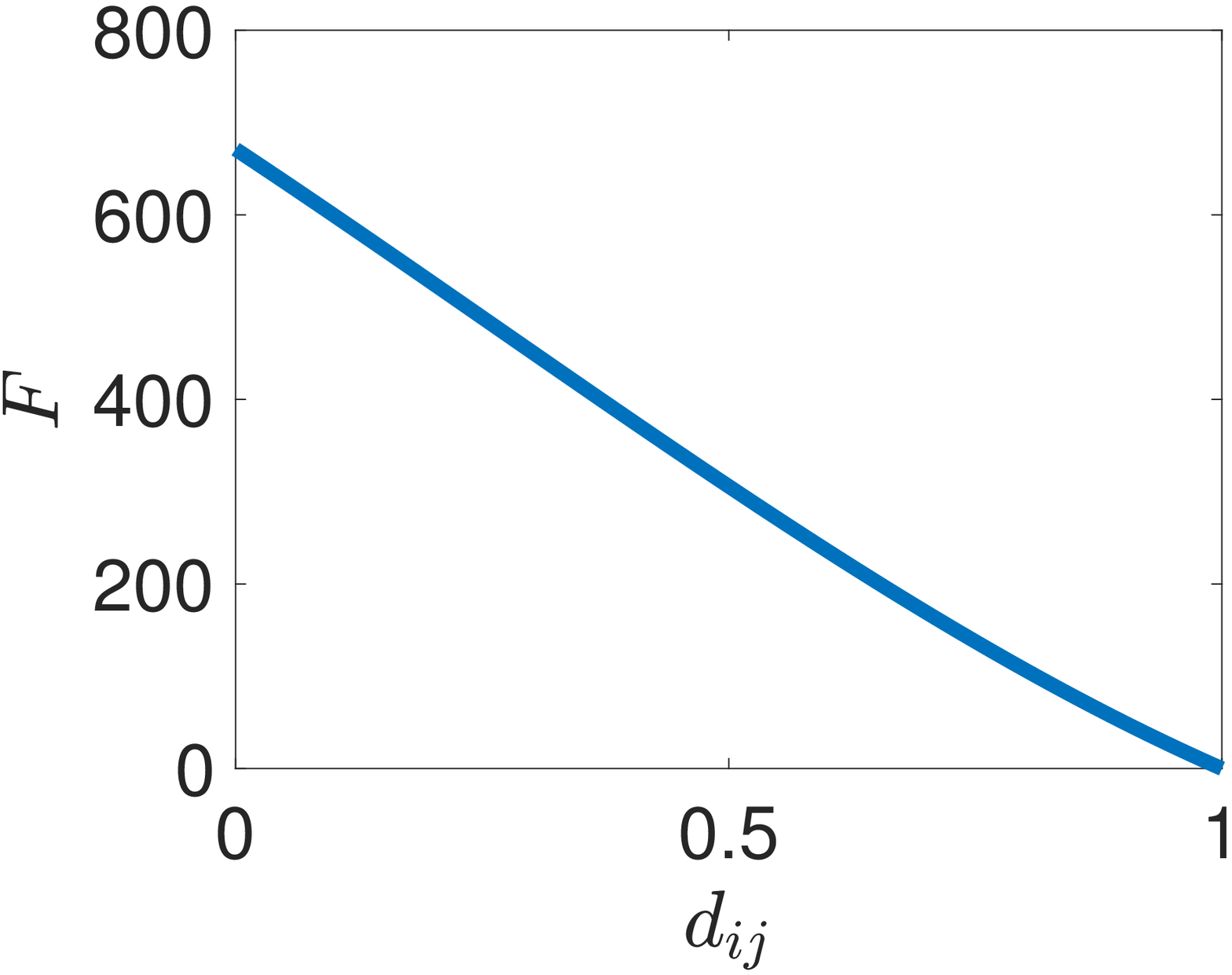}}
{\includegraphics[scale=0.3] {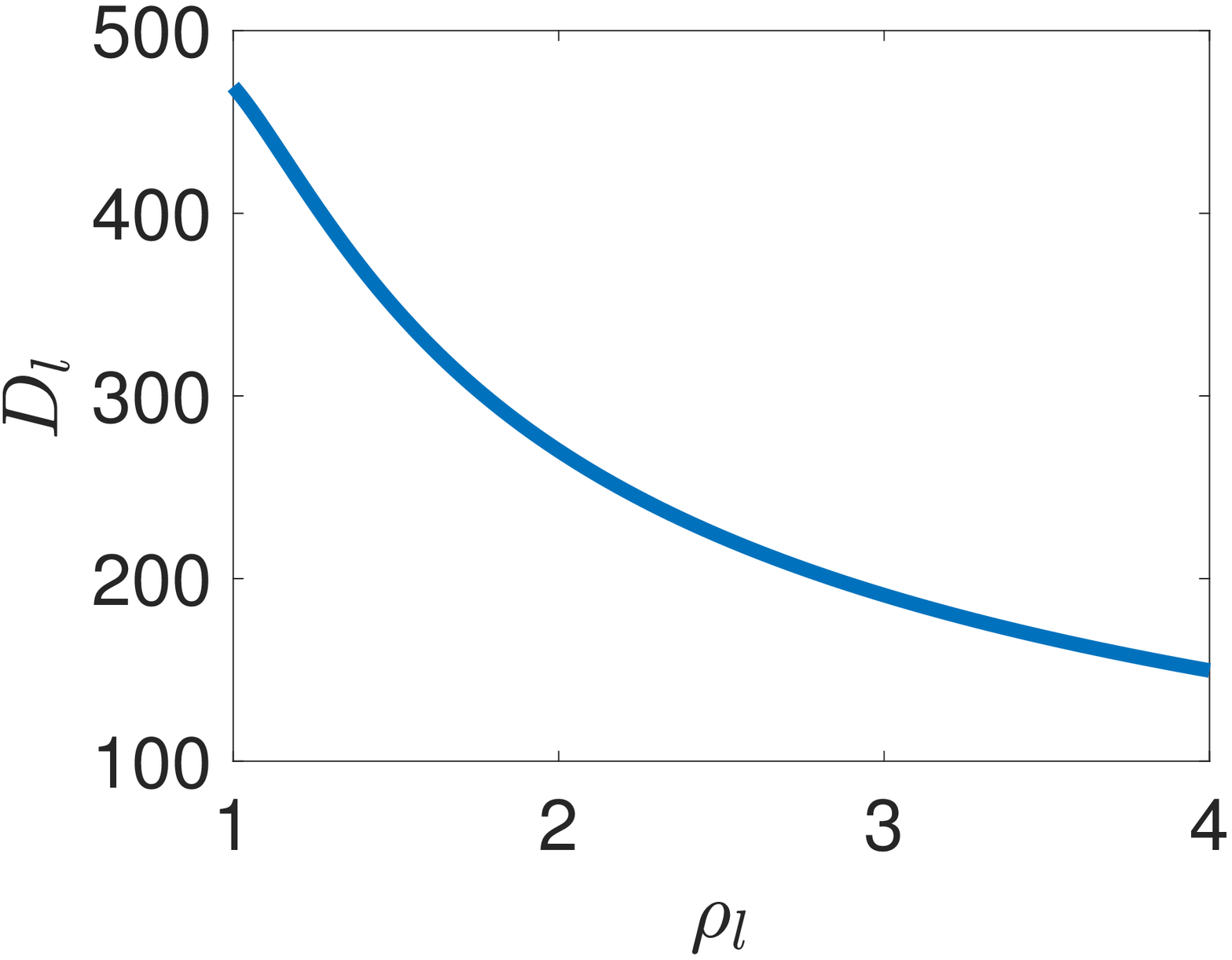}}
{\includegraphics[scale=0.3] {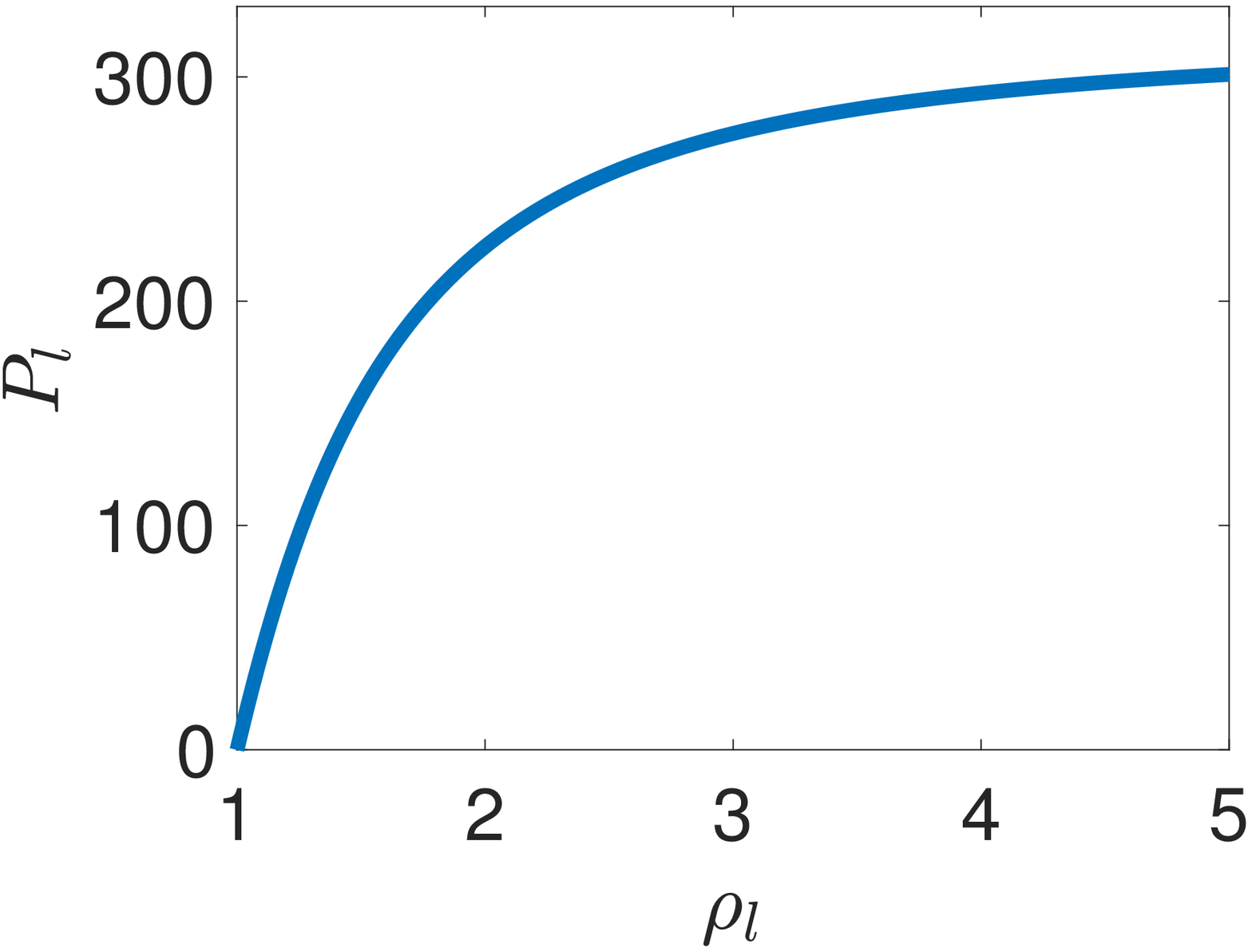}}
\caption{{\it Representations of the JKR model.} {\it Left panel.} The force between neighbouring cells $F$, defined by  \eqref{e.Fijrho}, is plotted against the intercellular distance $d_{ij}$. {\it Central panel.} The nonlinear diffusion coefficient $D_l$, defined by  \eqref{DiffCoGen}, is plotted against the cell density $\rho_l$. {\it Right panel.} The cell pressure $P_l$, defined by  \eqref{JKRPressureDensity}, is plotted against the cell density $\rho_l$. The  parameter values are as in~\eqref{paramval} and $\eta_l=0.5 \times 10^{-2} \, \text{kg} \, \text{s}^{-1}$. The values of $d_{ij}$ and $\rho_l$ are nondimensionalised by $d^{\rm eq}$ and $\rho^{\rm eq}$, respectively.}
\label{ForceDisplacement}
\end{center}
\end{figure}

Numerical simulations were performed using parameter values chosen in agreement with those used in~\cite{drasdo2012modeling}, that is,
\beq \label{paramval}
E= 300 \, \text{Pa}, \quad \nu=0.4, \quad \gamma= \zeta \, k_B \, T, \quad R = 7.5 \times 10^{-6} \, \text{m},
\eeq
where $\zeta =10^{15} \, \text{m}^2$ is the density of cell-cell adhesion molecules in the cell membrane, $k_B$ is the Boltzmann constant and $T = 298 \, \text{K}$ is an absolute temperature. Figure~\ref{ForceDisplacement} displays the plots of the force $F$ between neighbouring cells,  the nonlinear diffusion coefficient $D_l$, and the cell pressure $P_l$, obtained using the parameter values given by~\eqref{paramval}. 

We let the cell damping coefficients of population $A$ be $\eta_A=0.5 \times 10^{-2} \, \text{kg} \, \text{s}^{-1}$, and considered the cases where $\eta_A=\eta_B$ or $\eta_B=2 \, \eta_A$ or $\eta_B=0.5 \, \eta_A$. Moreover, for the cell proliferation term we assumed
$$
g(1/\rho_A) = \tilde{H}(1/\rho_A-1/\rho^{\rm M}) \quad \text{and} \quad G(\rho_A) = \alpha \, \tilde{H}(\rho^{\rm M}-\rho_A), \quad \text{with} \quad  \rho^{{\rm M}}=\frac{4}{3} \, \rho^{\rm eq} \quad \text{and} \quad \alpha=\frac{1}{2},
$$
where $\tilde{H}$ is a smooth approximation to the Heaviside  function. 

To construct numerical solutions, the free-boundary problem~\eqref{eq:contin_110}, \eqref{eq:contin_11} and \eqref{eq:two_popul_model12} was transformed to a Lagrangian reference frame and the method of lines was employed to solve the resultant equations. The resulting system of ordinary differential equations,  as well as the ordinary differential equations \eqref{discrete_11} and \eqref{boundary_c_11} of the individual-based model, were numerically solved using the {\sc Matlab} routine {\sc ode15s}. 

The plots in Figures~\ref{EqualMotilityTwoPopJKRModelDensity}-\ref{HigherMotilityNonProlifTwoPopJKRModelDensity} show sample dynamics of the cell density $\rho$ defined as
\beq
\label{celldens}
\rho(t,x)=\left\{
\begin{array}{ll}
\rho_A(t,x), \quad \text{if } x \leq s_1(t),
\\\\
\rho_B(t,x), \quad \text{if } x > s_1(t),
\end{array}
\right.
\eeq
with $\rho_A$ and $\rho_B$ being either numerical solutions of the free-boundary problem~\eqref{eq:contin_110}, \eqref{eq:contin_11}, \eqref{eq:two_popul_model12} (black lines) or approximate cell densities 
computed from numerical solutions of the individual-based model~\eqref{DiscreteModelReindexingEqn}-\eqref{boundary_c_11}
 using~\eqref{celldensdisc} (red markers). In agreement with the results established in Theorem~\ref{theorem.TWFB}, we observe the emergence of travelling-wave solutions, whereby the positions of the inner  free boundary $s_1(t)$ and the outer free boundary $s_2(t)$ move at the same constant speed, and the cell densities $\rho_A$ and $\rho_B$ are monotonically decreasing. 

Moreover, $\rho$ is continuous at $s_1$ if $\eta_A=\eta_B$, cf. the plots in Figure \ref{EqualMotilityTwoPopJKRModelDensity}, whereas it has a jump discontinuity at $s_1$ both for $\eta_A<\eta_B$ and for $\eta_A>\eta_B$. The sign of the jump $\rho(s_1^+) - \rho(s_1^-)$ satisfies condition~\eqref{e.Th1jump}, cf. the plots in Figures \ref{LowerMotilityNonProlifTwoPopJKRModelDensity}~and ~\ref{HigherMotilityNonProlifTwoPopJKRModelDensity}, and, once that the travelling-wave is formed, the size of the jump is constant and such that the transmission condition~\eqref{e.TWsysFB}$_3$ is met, see Supplementary Figure \ref{Figure_5}(a). 
  As shown by these plots, there is an excellent match between the numerical solutions of the free-boundary problem and the computational simulation results for the individual-based model.
\begin{figure}
\centering
{\includegraphics[scale = 0.31]{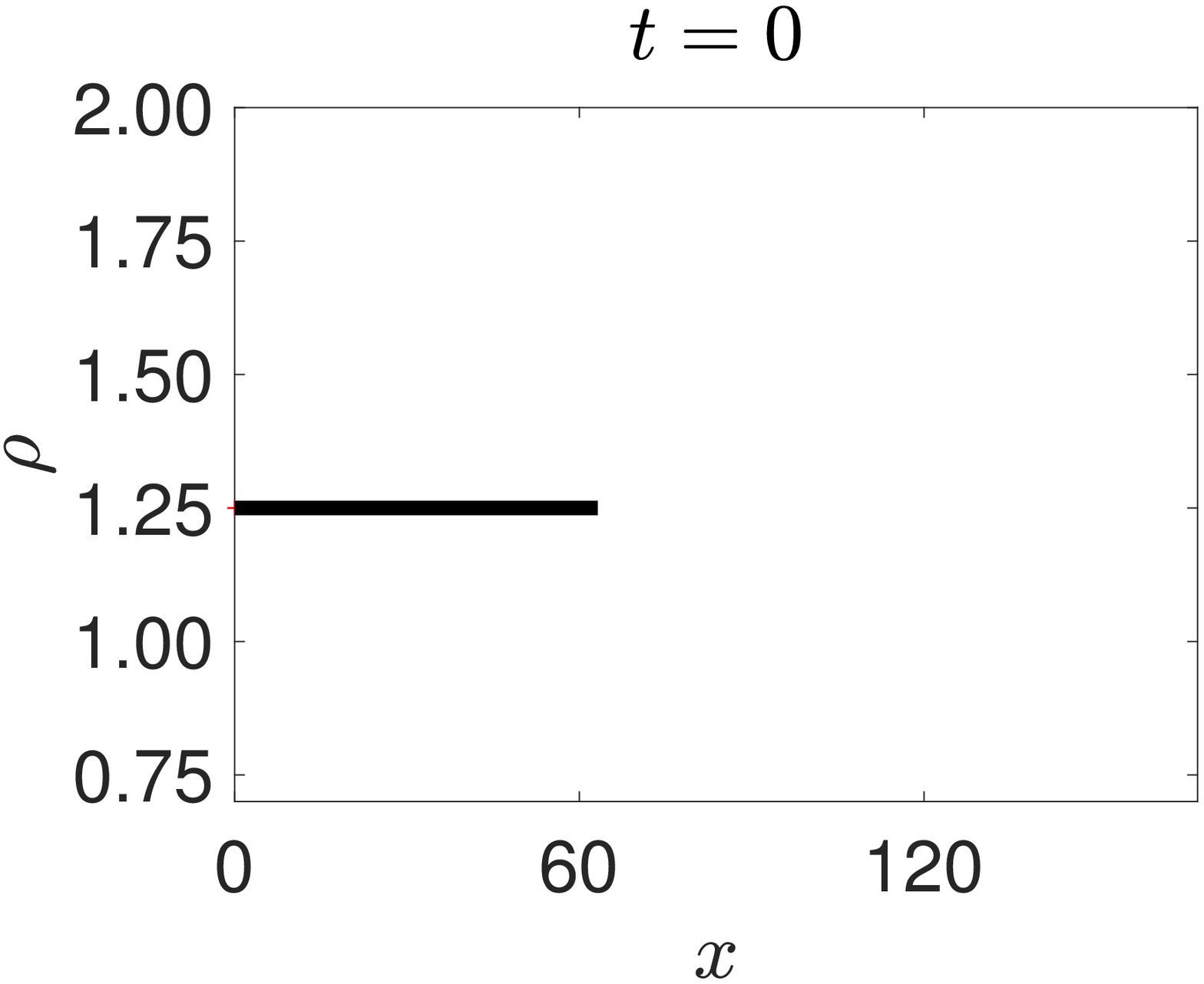}} 
{\includegraphics[scale = 0.31]{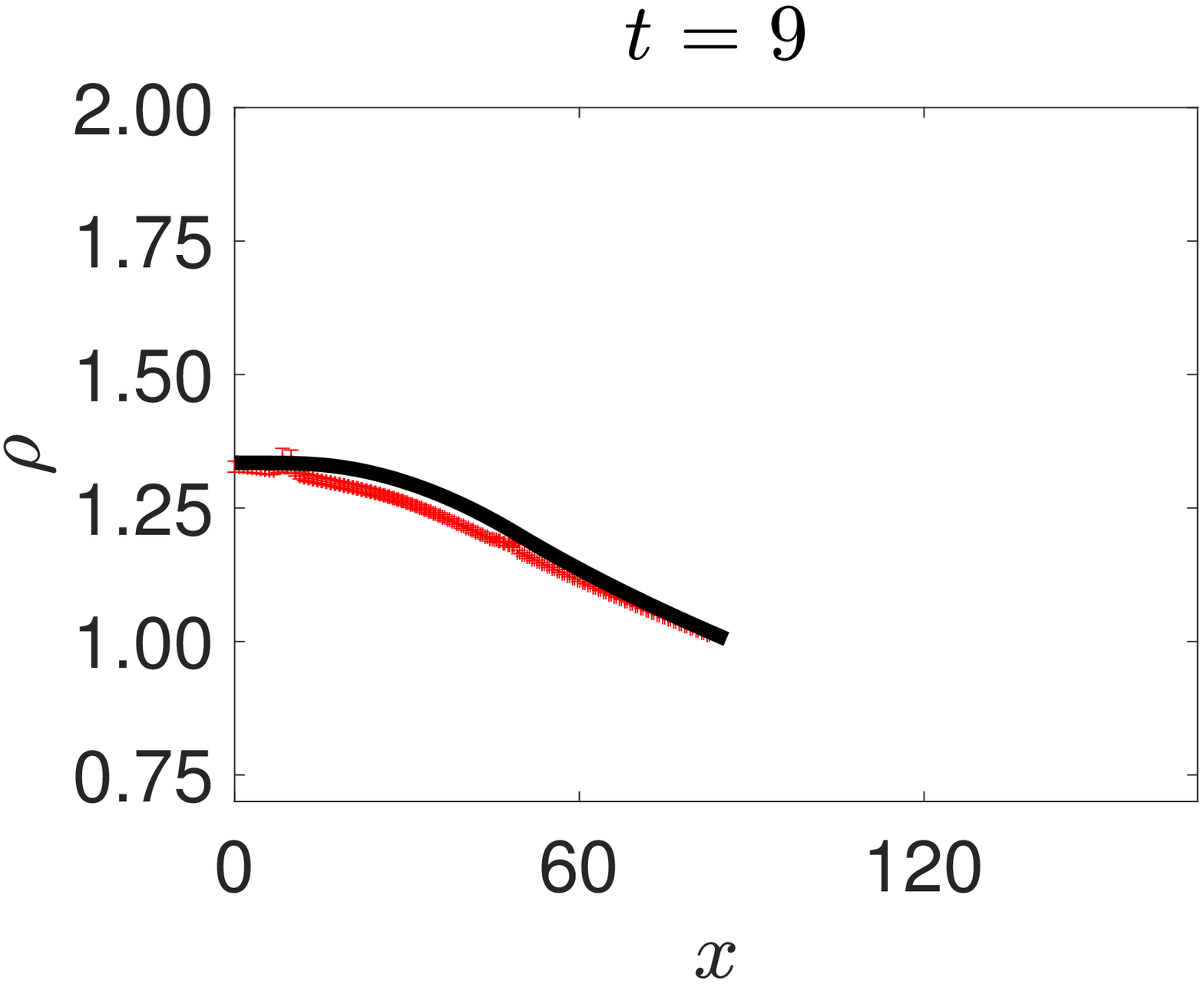}} 
{\includegraphics[scale = 0.31]{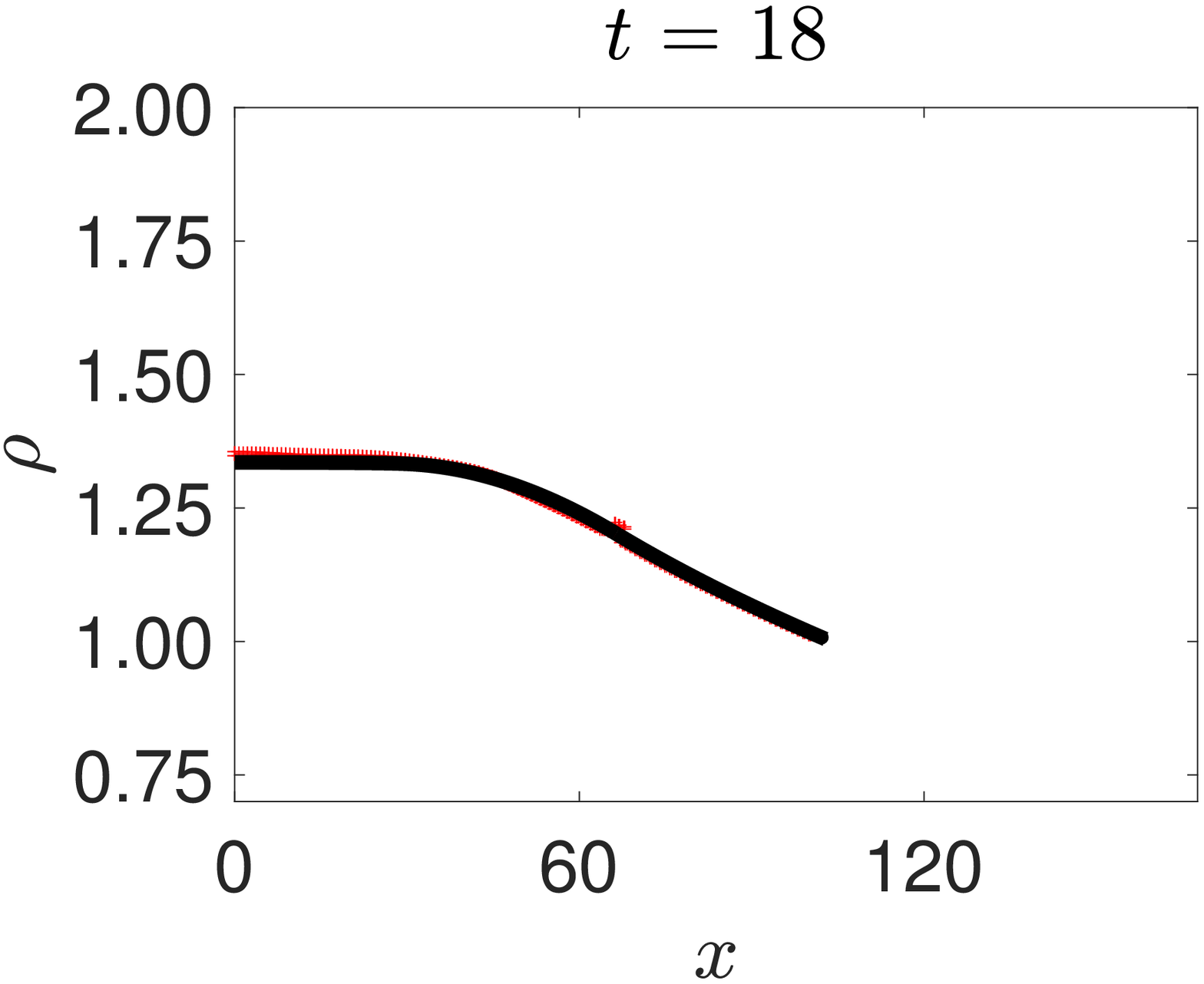}}\\
{\includegraphics[scale = 0.31]{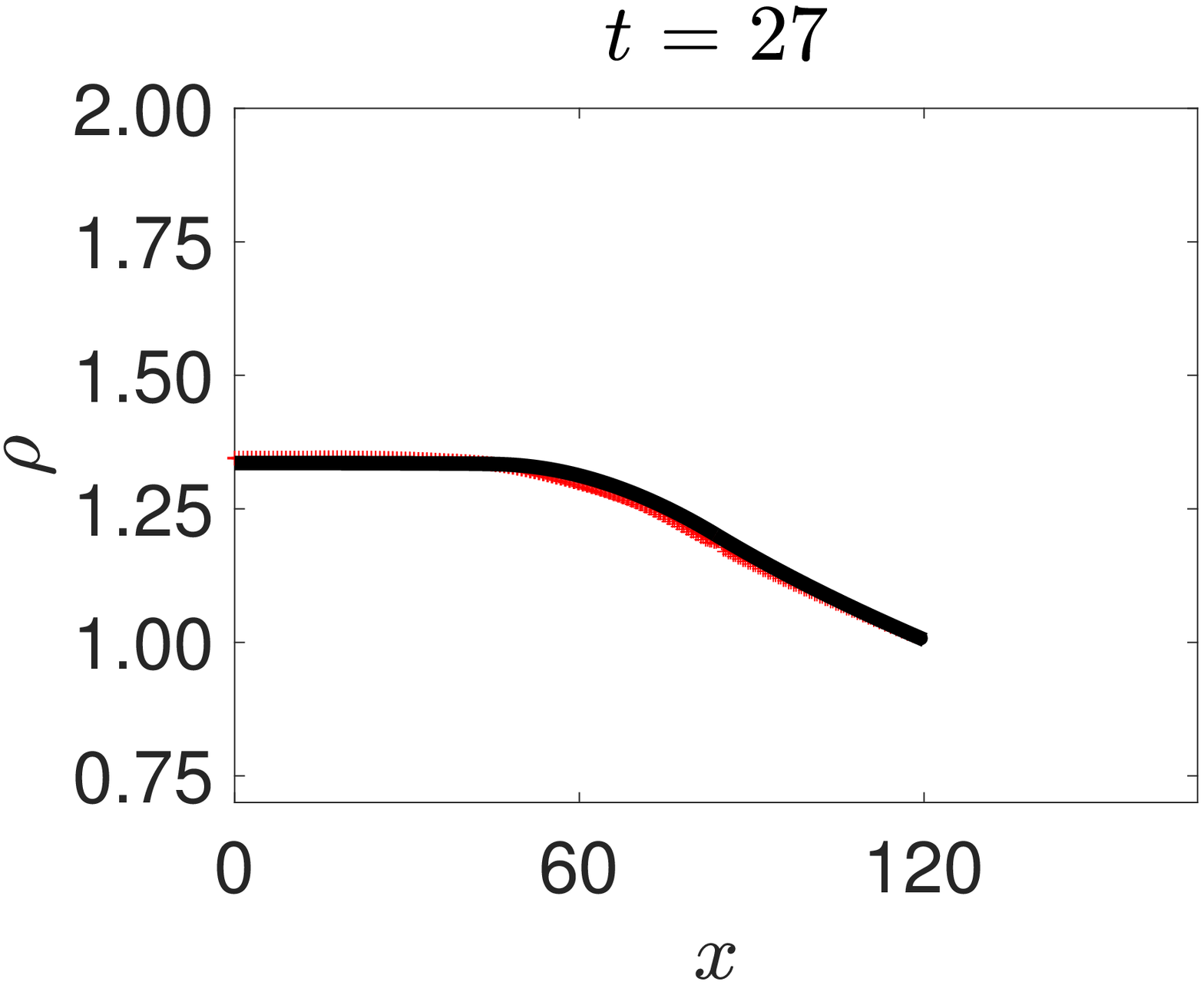}} 
{\includegraphics[scale = 0.31]{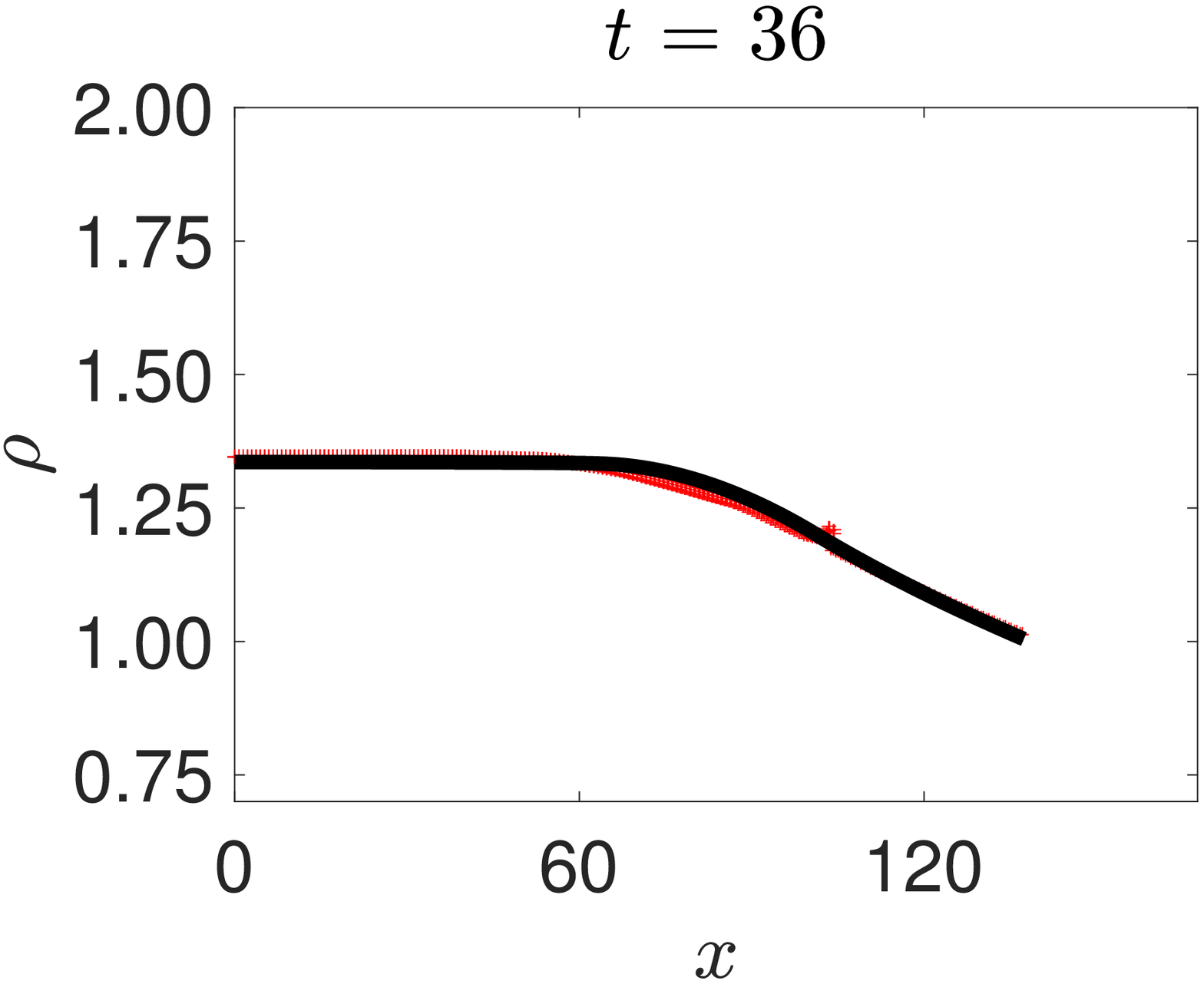}}
{\includegraphics[scale = 0.31]{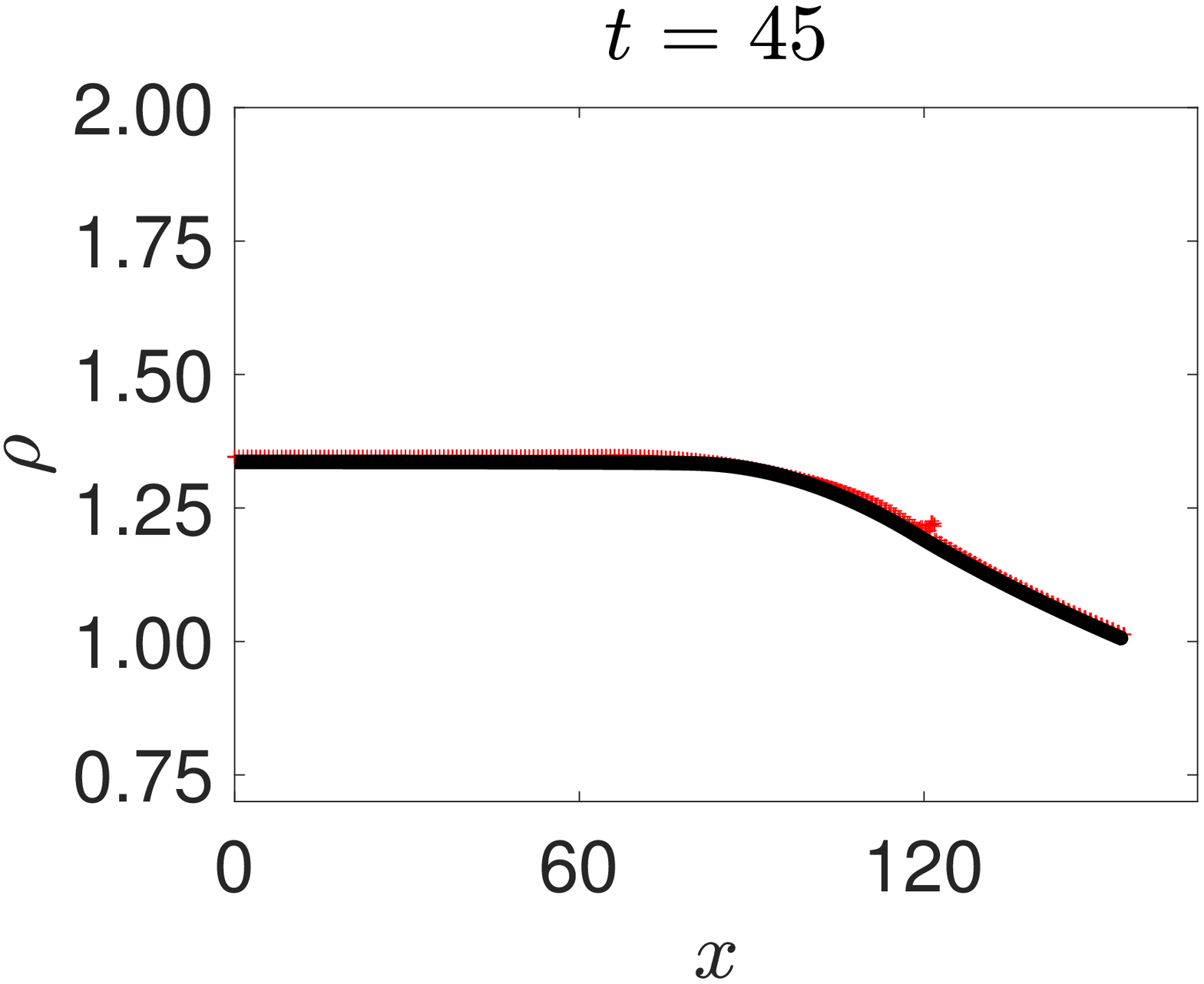}} 
\caption{{\it Comparison between the free-boundary problem and the individual-based model for $\eta_A=\eta_B$.} The cell density, $\rho(t,x)$, given by  \eqref{celldens} is plotted against $x$ for increasing values of $t$. The cell densities $\rho_A$ and $\rho_B$  are either numerical solutions of the free-boundary problem (black lines) or approximate cell densities computed from simulation results for the individual-based model using~\eqref{celldensdisc} (red markers). The values of $x$ are nondimensionalised by $d^{\rm eq}$, while the values of $\rho$ are nondimensionalised by $\rho^{\rm eq}$.}\label{EqualMotilityTwoPopJKRModelDensity}
\end{figure}

\begin{figure}
\centering
{\includegraphics[scale = 0.31]{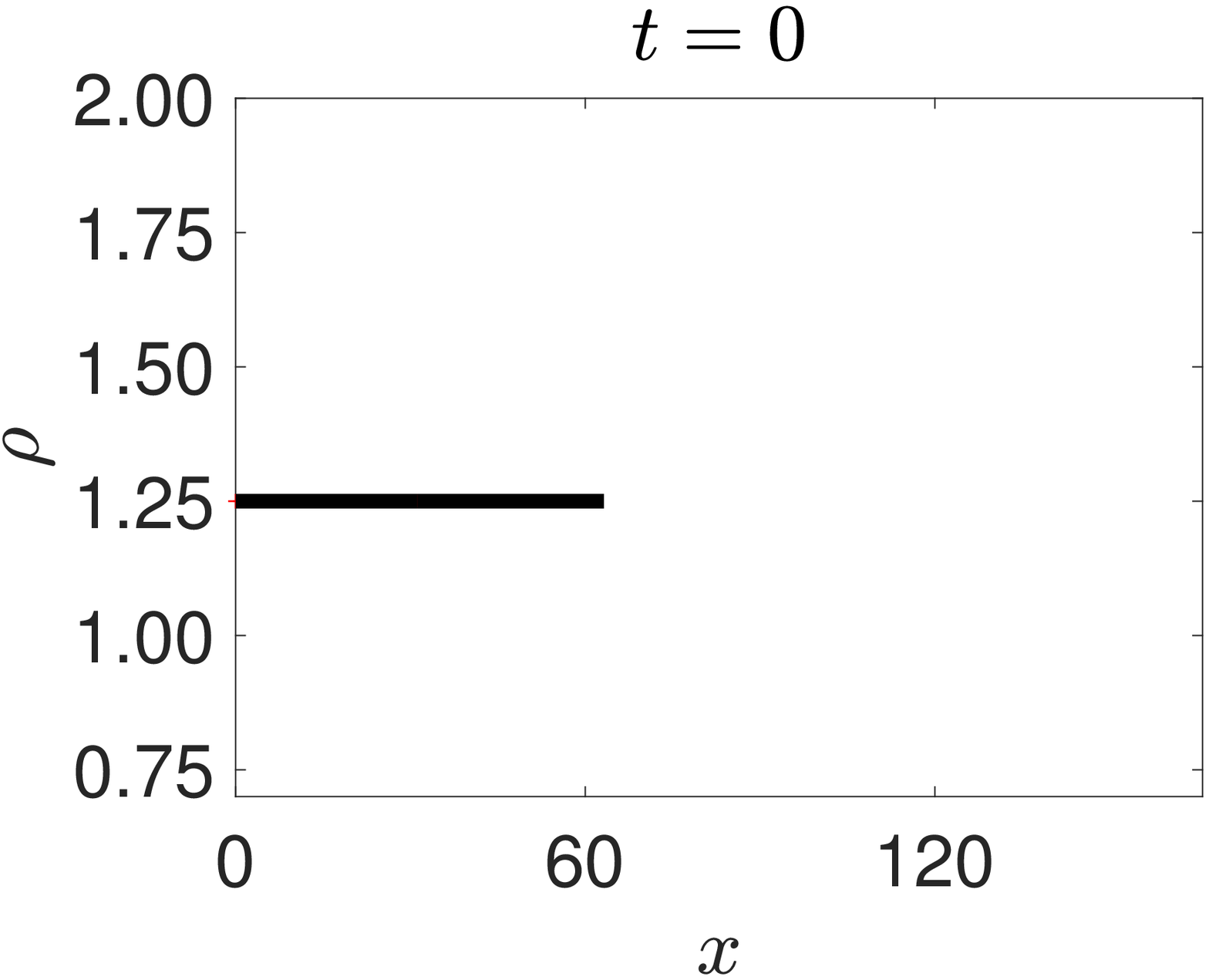}} 
{\includegraphics[scale = 0.31]{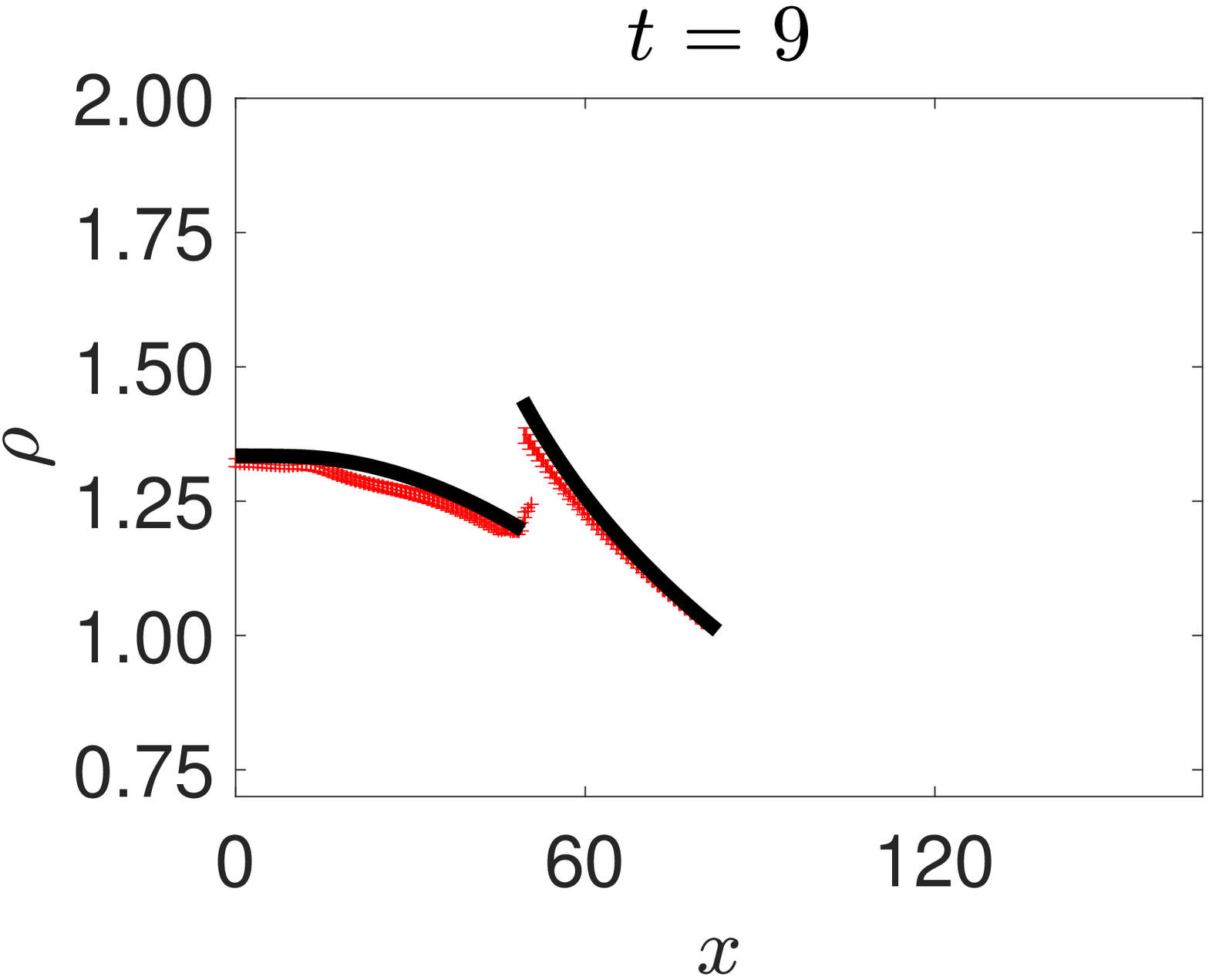}} 
{\includegraphics[scale = 0.31]{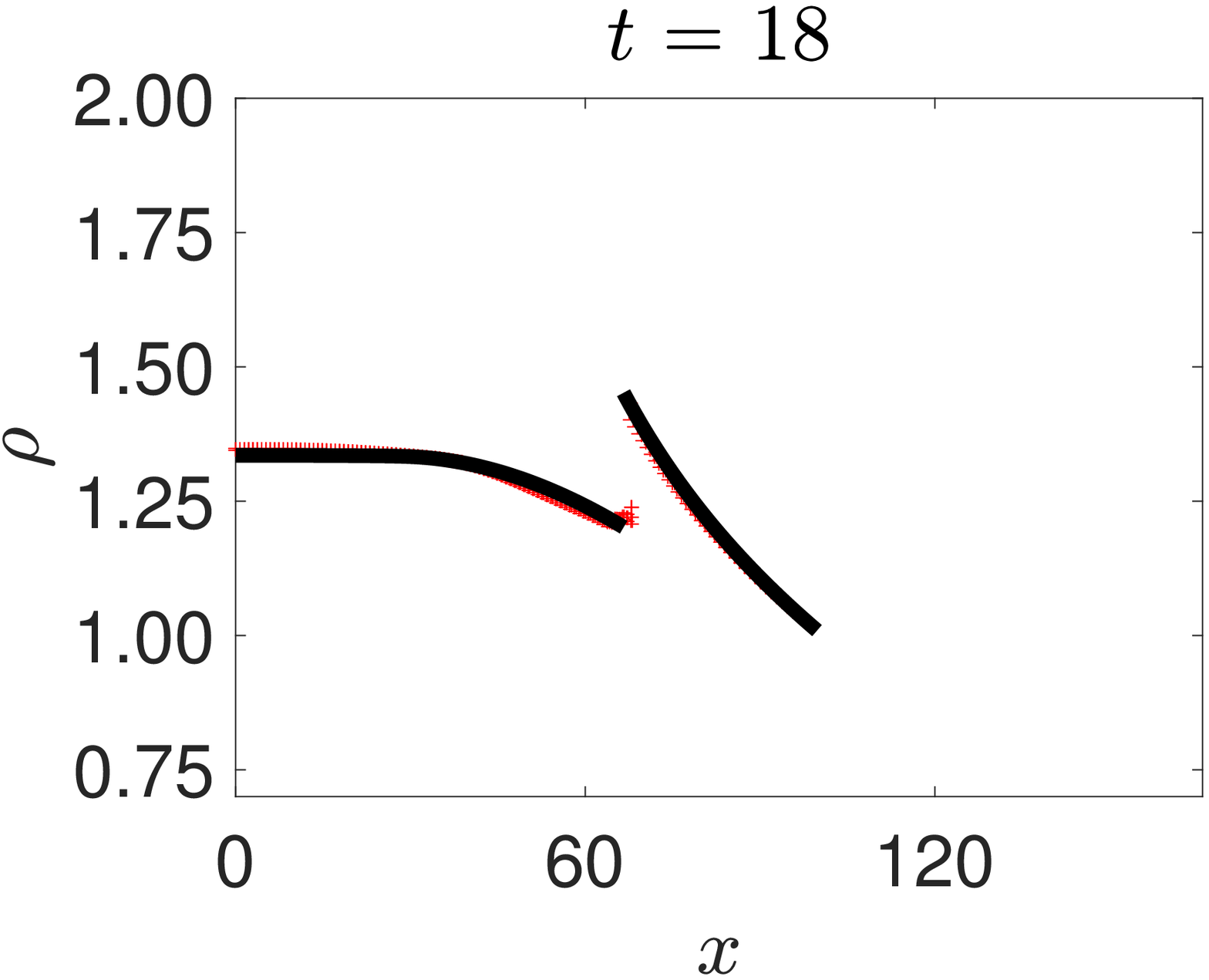}}\\
{\includegraphics[scale = 0.31]{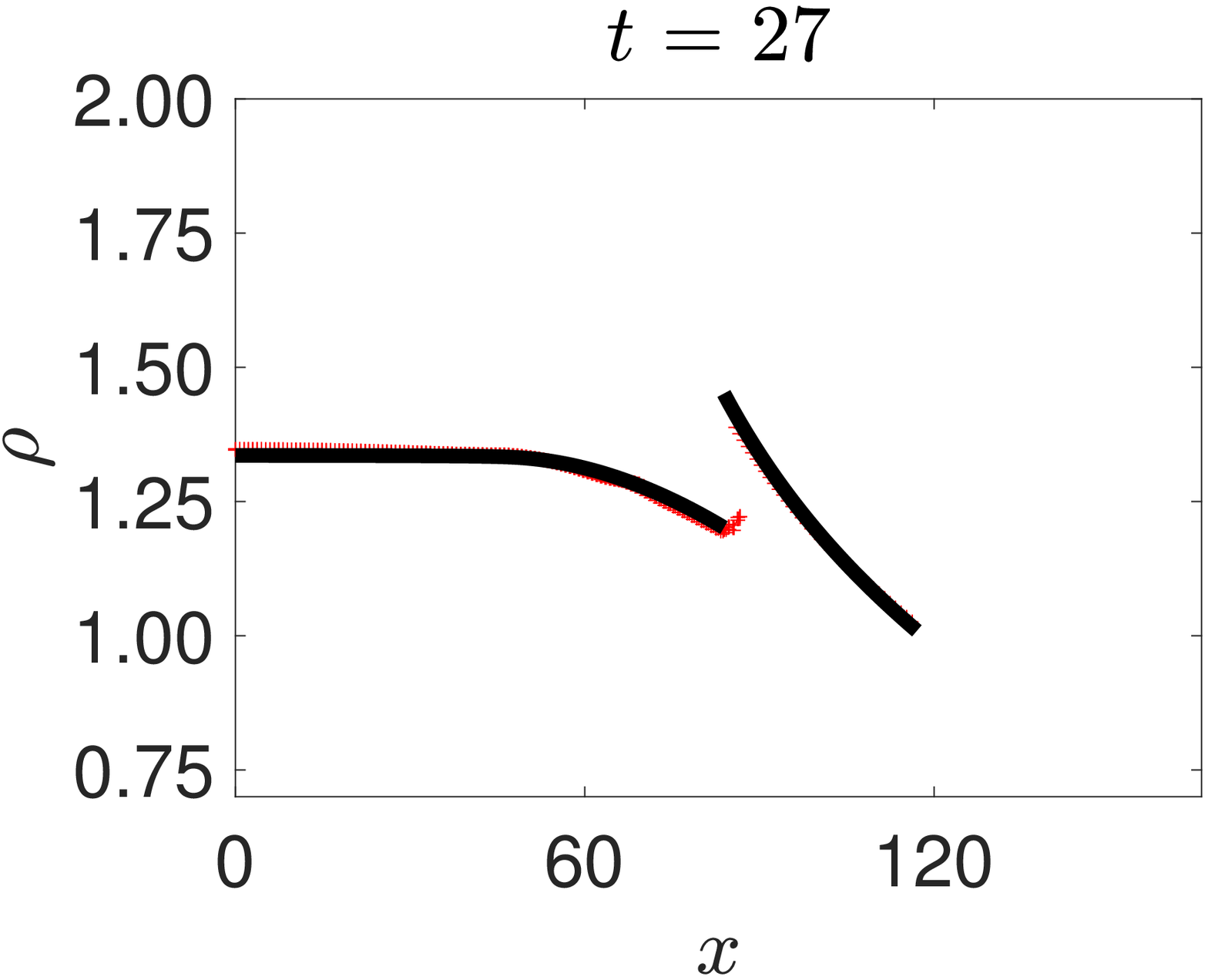}} 
{\includegraphics[scale = 0.31]{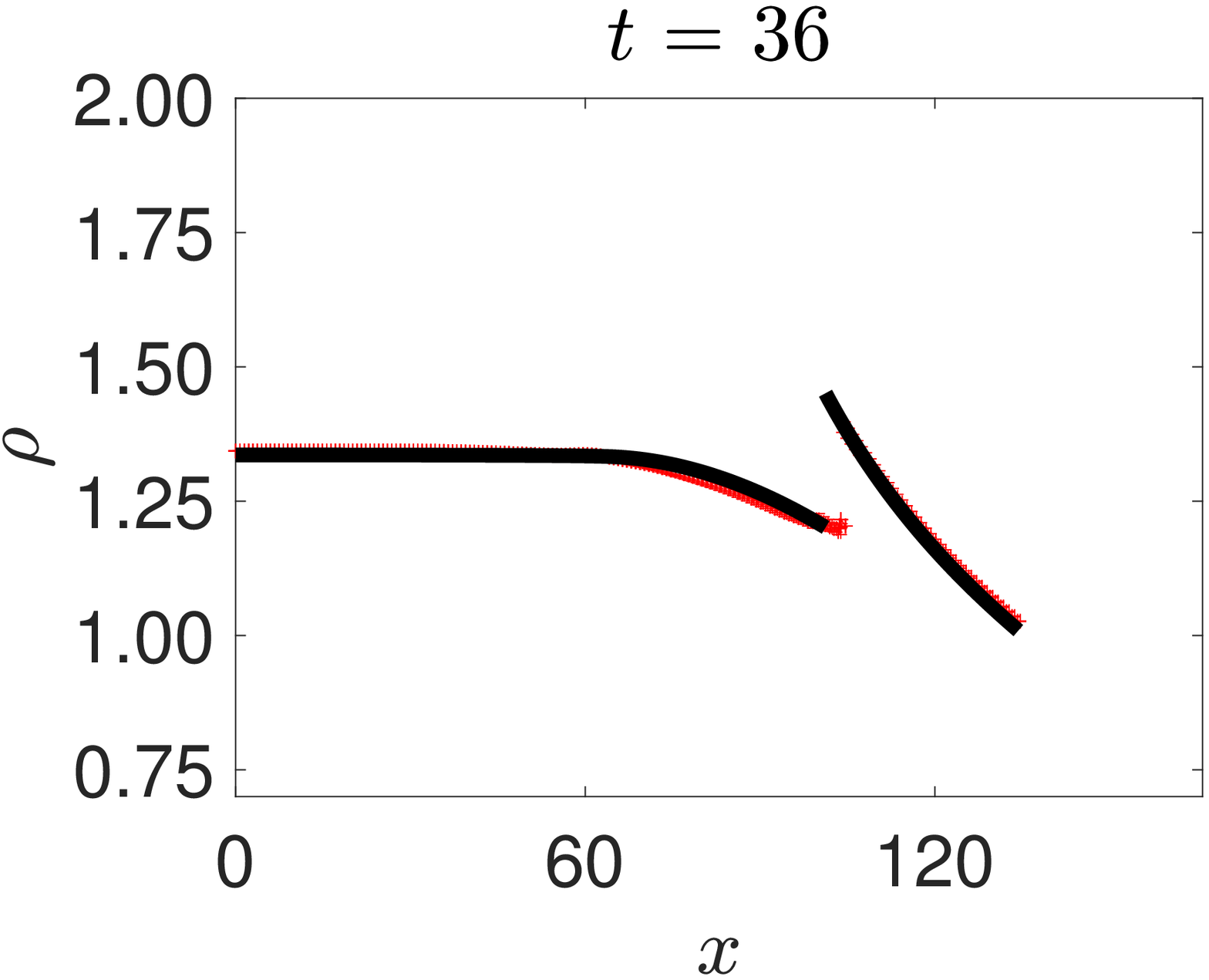}}
{\includegraphics[scale = 0.31]{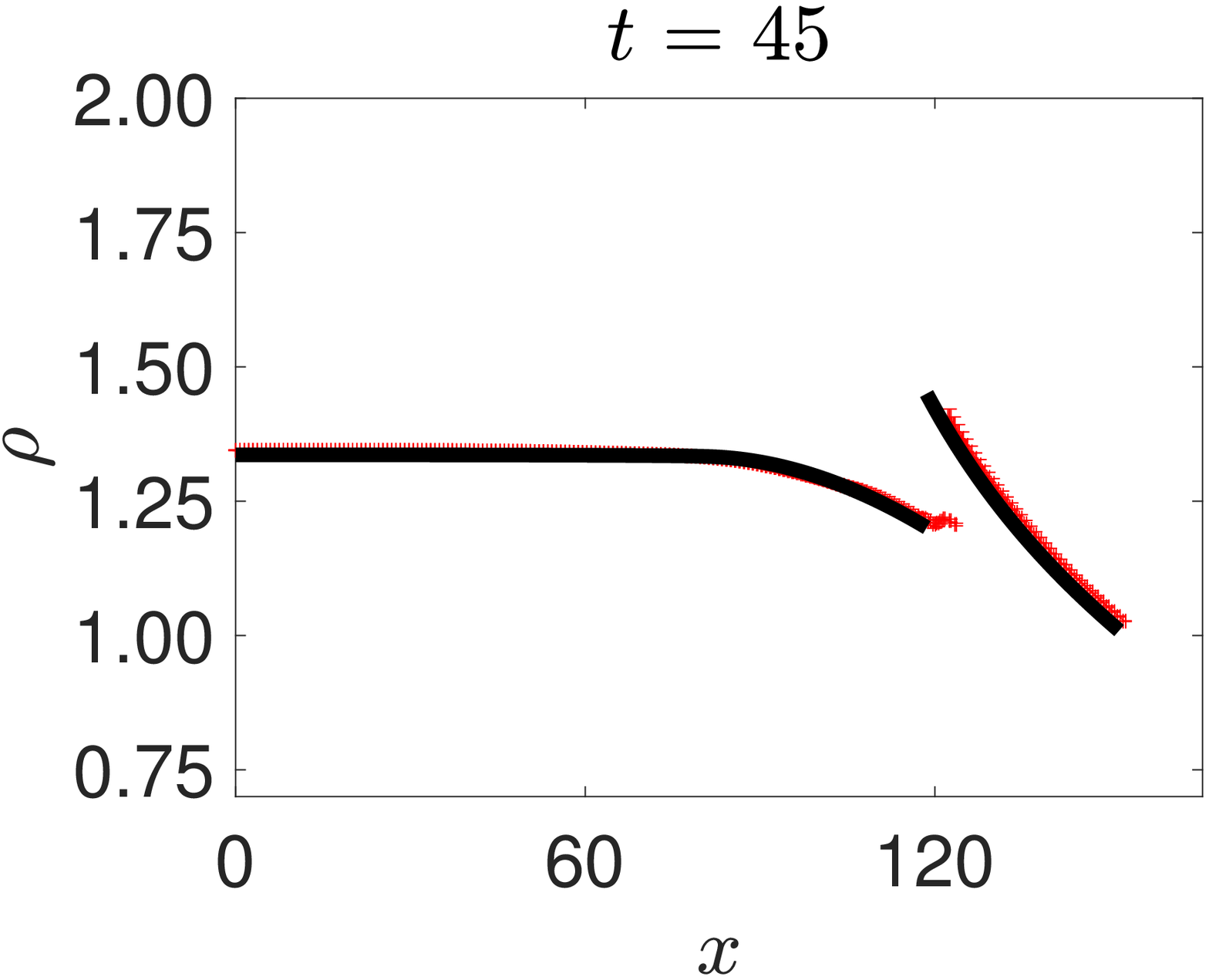}} 
\caption{{\it Comparison between the free-boundary problem and the individual-based model for $\eta_A<\eta_B$.} The cell density, $\rho(t,x)$, given by  \eqref{celldens}, is plotted against $x$ for increasing values of $t$. The cell densities $\rho_A$ and $\rho_B$ are either numerical solutions of the free-boundary problem (black lines) or approximate cell densities computed  from  simulation results for the individual-based model using~\eqref{celldensdisc} (red markers). The values of $x$ are nondimensionalised by $d^{\rm eq}$, while the values of $\rho$ are nondimensionalised by $\rho^{\rm eq}$.}
\label{LowerMotilityNonProlifTwoPopJKRModelDensity}
\end{figure}

\begin{figure}
\centering
{\includegraphics[scale = 0.31]{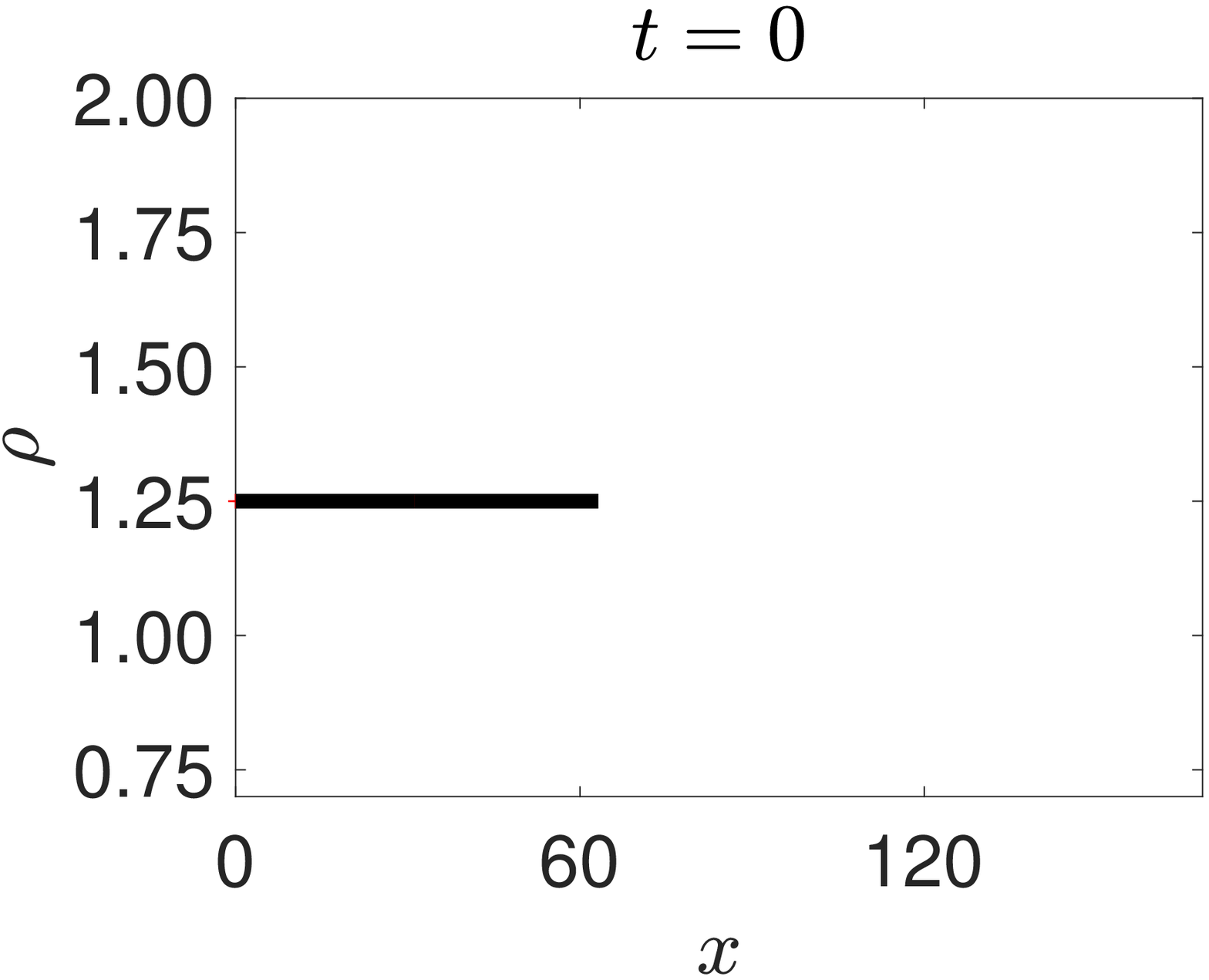}} 
{\includegraphics[scale = 0.31]{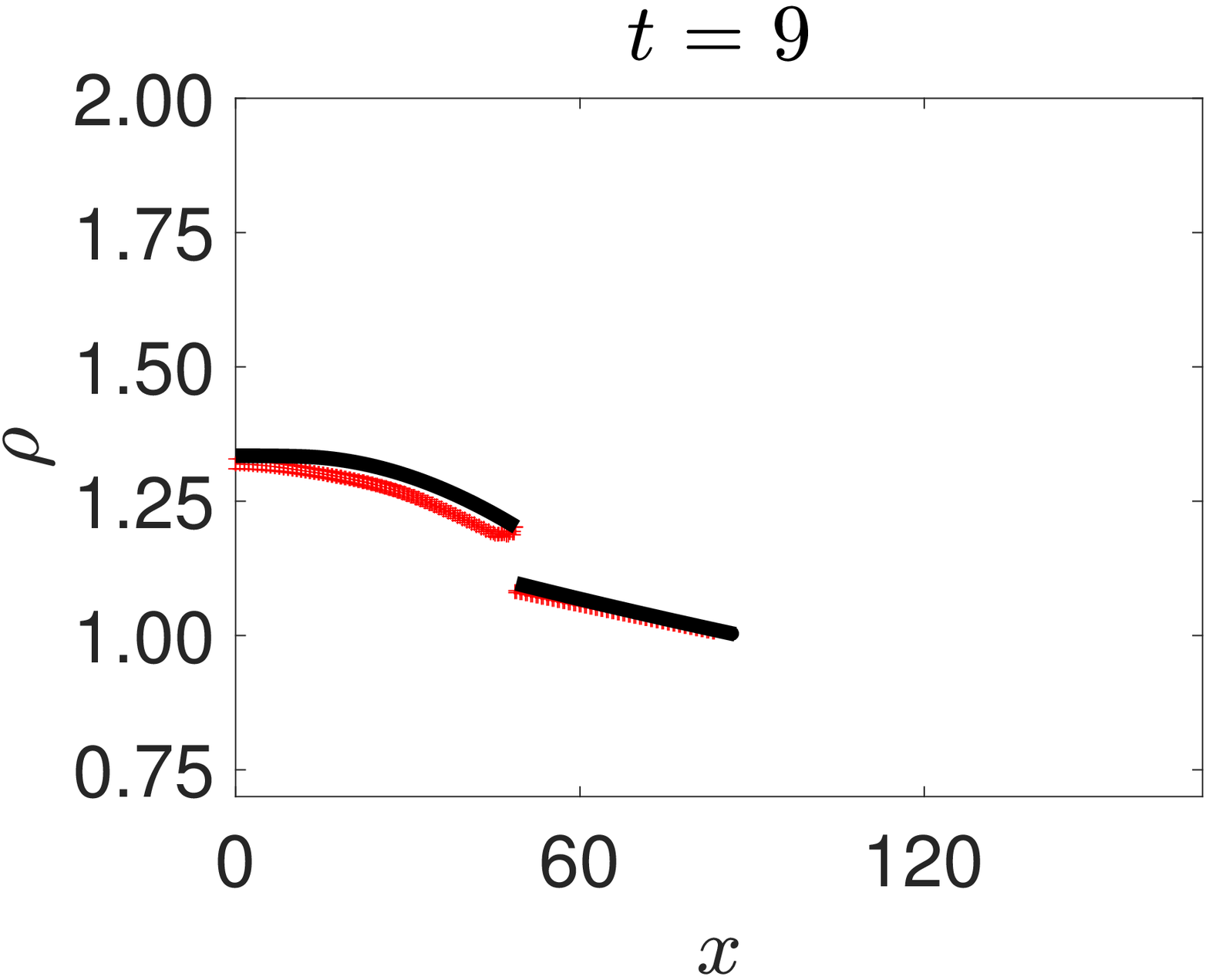}} 
{\includegraphics[scale = 0.31]{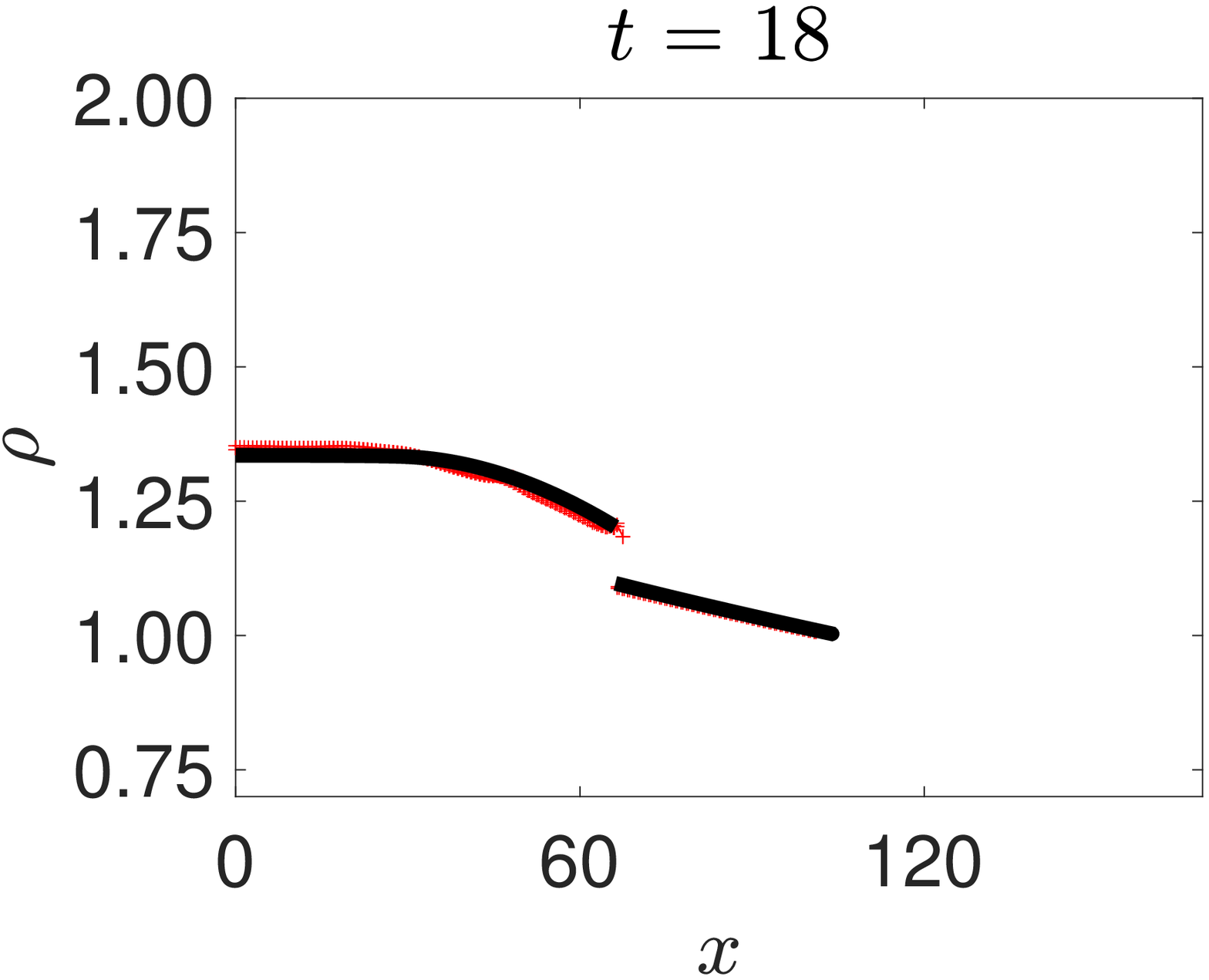}}\\
{\includegraphics[scale = 0.31]{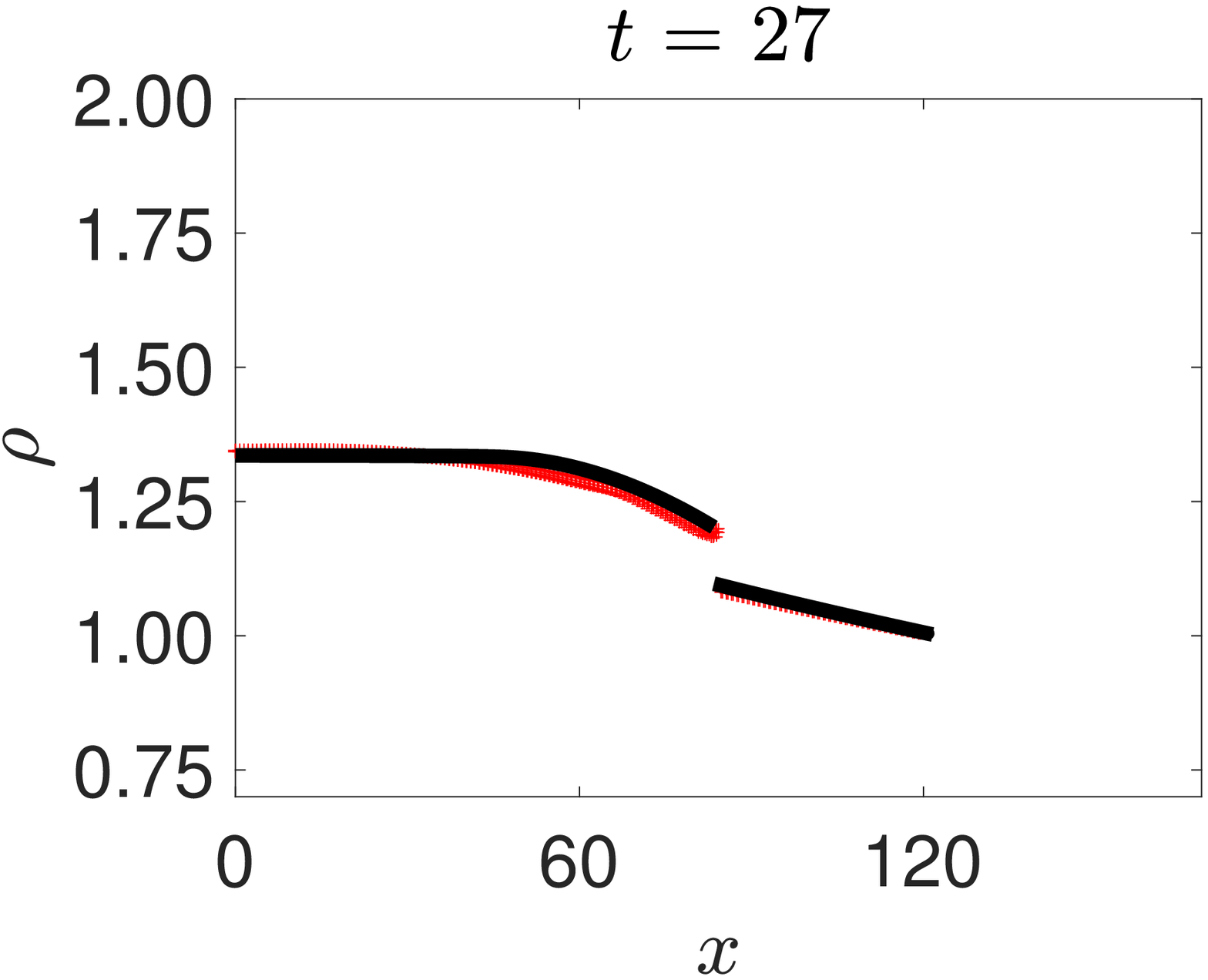}} 
{\includegraphics[scale = 0.31]{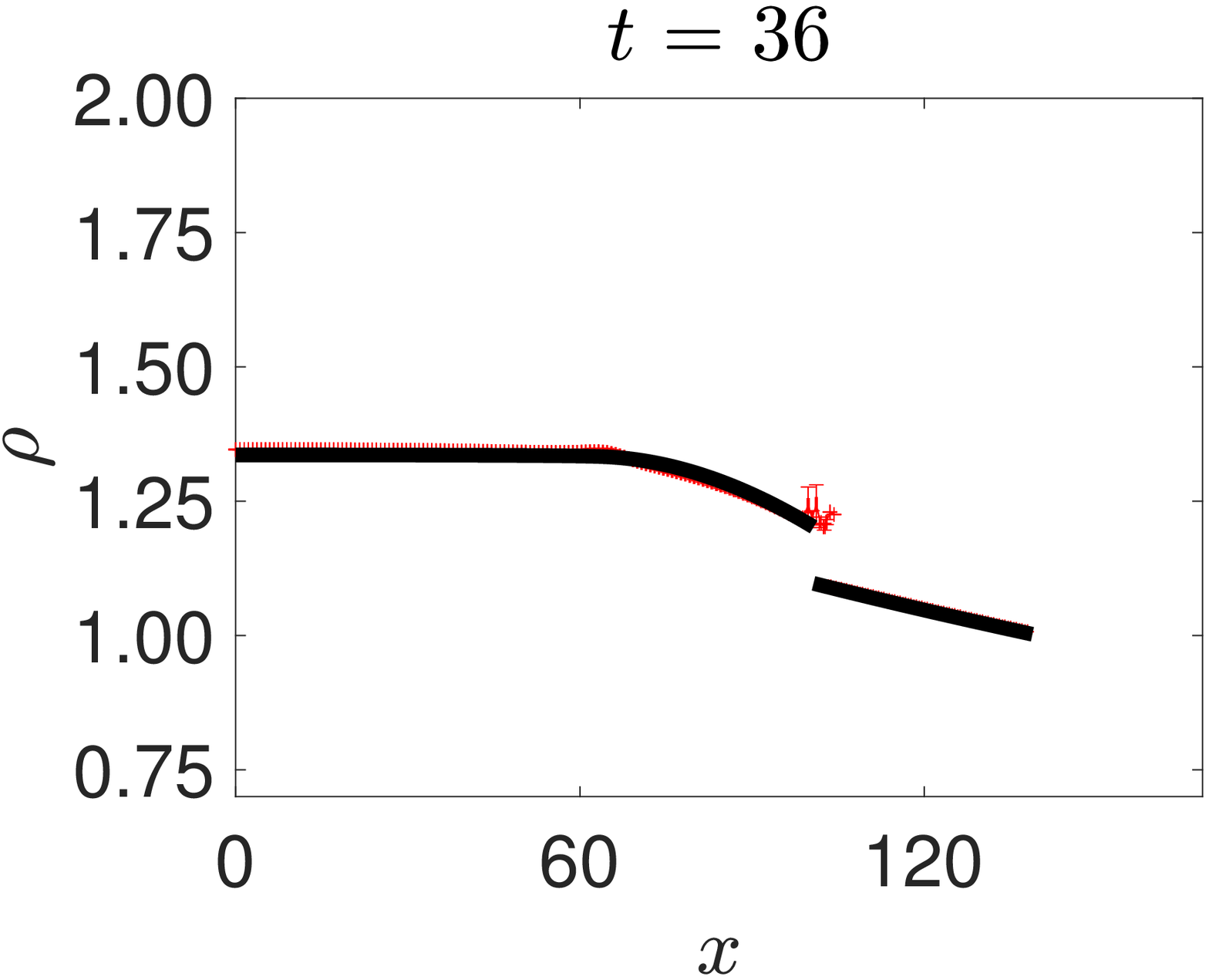}}
{\includegraphics[scale = 0.31]{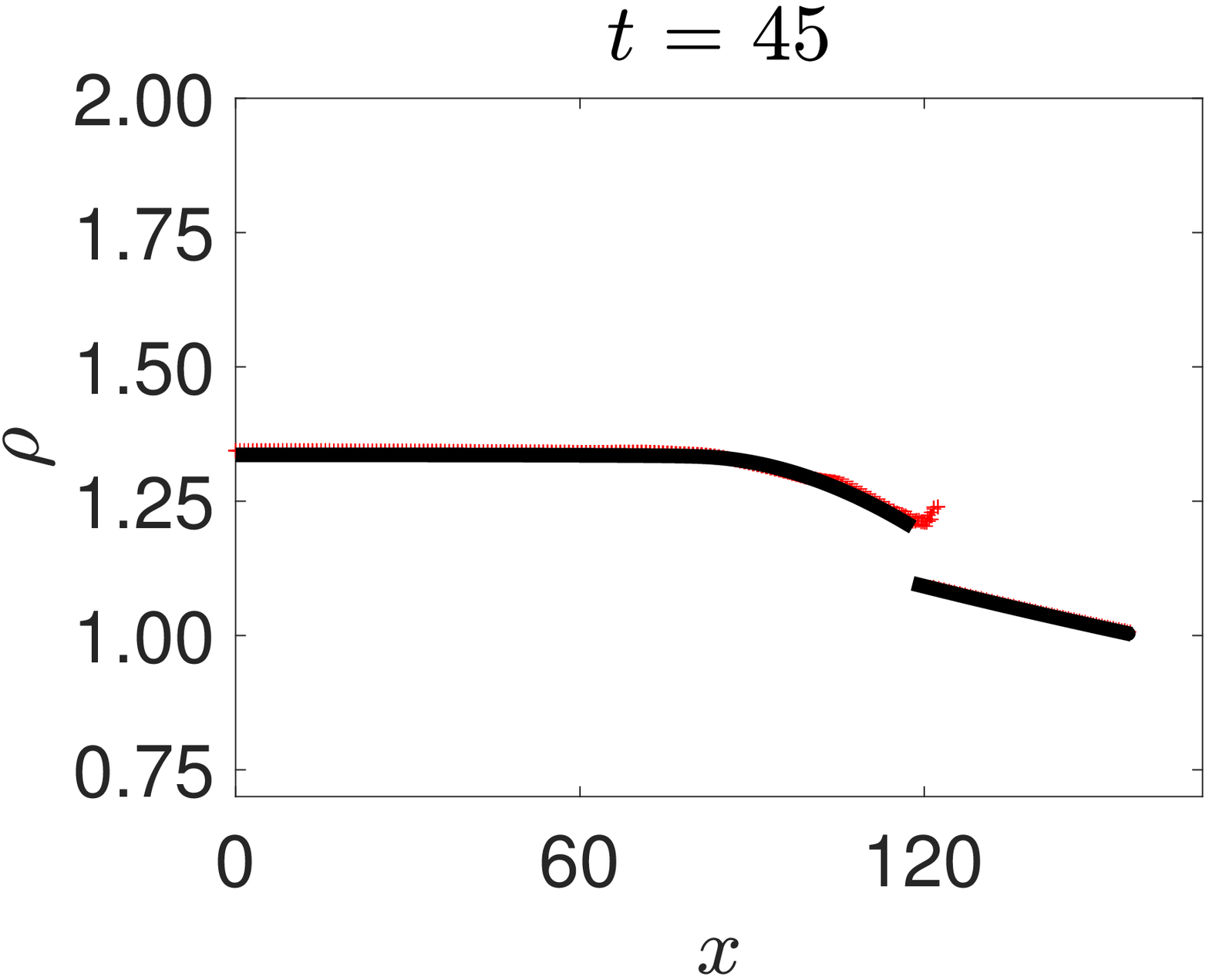}} 
\caption{{\it Comparison between the free-boundary problem and the individual-based model for $\eta_A>\eta_B$.} The cell density  $\rho(t,x)$, given by  \eqref{celldens}, is plotted against $x$ for increasing values of $t$. The cell densities $\rho_A$ and $\rho_B$  are either numerical solutions of the free-boundary problem (black lines) or approximate cell densities computed from simulation results for the individual-based model using~\eqref{celldensdisc} (red markers). The values of $x$ are nondimensionalised by $d^{\rm eq}$, while the values of $\rho$ are nondimensionalised by $\rho^{\rm eq}$.}\label{HigherMotilityNonProlifTwoPopJKRModelDensity}
\end{figure}

\section{Discussion}
\label{sec:discussion}
We presented an off-lattice individual-based  model that describes the dynamics of two contiguous cell populations with different proliferative and mechanical characteristics. 

We formally showed that this discrete model can be represented in the continuum limit as a free-boundary problem for the cell densities. We proved an existence result for the free-boundary problem and constructed travelling-wave solutions. We performed numerical simulations in the case where the cellular interaction forces are described by the celebrated JKR model of elastic contact, and we found excellent agreement between the computational simulation results for the individual-based model, the numerical solutions of the corresponding free-boundary problem and the travelling-wave analysis. Taken together, the results of numerical simulations demonstrate that the solutions of the free-boundary problem faithfully capture the qualitative and quantitative properties of the outcomes of the off-lattice individual-based model. 

In this paper, we focussed on a one-dimensional scenario where the two cell populations do not mix. It would be interesting to extend the individual-based mechanical model presented here, and the related formal method of derivation of the corresponding continuum model as well, to more realistic two-dimensional cases whereby spatial mixing between the two cell populations can occur. In this regard, an additional development of our study would be to formulate probabilistic discrete mechanical models of interacting cell populations and, using asymptotic methods analogous to those employed, for instance, in~\cite{champagnat2007invasion,fournier2004microscopic,oelschlager1989derivation,oelschlager1990large}, to perform  a rigorous derivation of their continuum counterparts. These are all lines of research that we will be pursuing in the near future.

\bibliographystyle{siamplain}
\bibliography{Biblio}

\begin{thebibliography}{10}

\bibitem{ambrosi2002mechanics}
{\sc D.~Ambrosi and F.~Mollica}, {\em On the mechanics of a growing tumor},
  International Journal of Engineering Science, 40 (2002), pp.~1297--1316.

\bibitem{ambrosi2002closure}
{\sc D.~Ambrosi and L.~Preziosi}, {\em On the closure of mass balance models
  for tumor growth}, Mathematical Models and Methods in Applied Sciences, 12
  (2002), pp.~737--754.

\bibitem{araujo2004history}
{\sc R.~P. Araujo and D.~S. McElwain}, {\em A history of the study of solid
  tumour growth: the contribution of mathematical modelling}, Bulletin of
  Mathematical Biology, 66 (2004), pp.~1039--1091.

\bibitem{baker2018free}
{\sc R.~E. Baker, A.~Parker, and M.~J. Simpson}, {\em A free boundary model of
  epithelial dynamics}, Journal of Theoretical Biology,
  https://doi.org/10.1016/j.jtbi.2018.12.025 (2019).

\bibitem{bertsch2013modeling}
{\sc M.~Bertsch, M.~Mimura, and T.~Wakasa}, {\em Modeling contact inhibition of
  growth: Traveling waves.}, Networks \& Heterogeneous Media, 8 (2013).

\bibitem{bresch2010computational}
{\sc D.~Bresch, T.~Colin, E.~Grenier, B.~Ribba, and O.~Saut}, {\em
  Computational modeling of solid tumor growth: the avascular stage}, SIAM
  Journal on Scientific Computing (SISC), 32 (2010), pp.~2321--2344.

\bibitem{bubba2019hele}
{\sc F.~Bubba, B.~Perthame, C.~Pouchol, and M.~Schmidtchen}, {\em Hele-shaw
  limit for a system of two reaction-(cross-) diffusion equations for living
  tissues}, arXiv preprint arXiv:1901.01692,  (2019).

\bibitem{byrne1995growth}
{\sc H.~Byrne and M.~A.~J. Chaplain}, {\em Growth of nonnecrotic tumors in the
  presence and absence of inhibitors}, Mathematical Biosciences, 130 (1995),
  pp.~151--181.

\bibitem{byrne2009individual}
{\sc H.~Byrne and D.~Drasdo}, {\em Individual-based and continuum models of
  growing cell populations: a comparison}, Journal of Mathematical Biology, 58
  (2009), p.~657.

\bibitem{byrne2003modelling}
{\sc H.~Byrne and L.~Preziosi}, {\em Modelling solid tumour growth using the
  theory of mixtures}, Mathematical Medicine and Biology, 20 (2003),
  pp.~341--366.

\bibitem{champagnat2007invasion}
{\sc N.~Champagnat and S.~M{\'e}l{\'e}ard}, {\em Invasion and adaptive
  evolution for individual-based spatially structured populations}, Journal of
  Mathematical Biology, 55 (2007), p.~147.

\bibitem{chaplain2019bridging}
{\sc M.~A. Chaplain, T.~Lorenzi, and F.~R. Macfarlane}, {\em Bridging the gap
  between individual-based and continuum models of growing cell populations},
  Journal of Mathematical Biology, https://doi.org/10.1007/s00285-019-01391-y
  (2019).

\bibitem{chen2001influence}
{\sc C.~Chen, H.~Byrne, and J.~King}, {\em The influence of growth-induced
  stress from the surrounding medium on the development of multicell
  spheroids}, Journal of Mathematical Biology, 43 (2001), pp.~191--220.

\bibitem{chu2005johnson}
{\sc Y.-S. Chu, S.~Dufour, J.~P. Thiery, E.~Perez, and F.~Pincet}, {\em
  Johnson-kendall-roberts theory applied to living cells}, Physical Review
  Letters, 94 (2005), p.~028102.

\bibitem{ciarletta2011radial}
{\sc P.~Ciarletta, L.~Foret, and M.~B. Amar}, {\em The radial growth phase of
  malignant melanoma: multi-phase modelling, numerical simulations and linear
  stability analysis}, Journal of the Royal Society Interface, 8 (2011),
  pp.~345--368.

\bibitem{Crowley}
{\sc Crowley}, {\em On the weak solution of moving boundary problems}, Journal
  of the Institute of Mathematics and its Applications, 24 (1979), pp.~43--57.

\bibitem{deroulers2009modeling}
{\sc C.~Deroulers, M.~Aubert, M.~Badoual, and B.~Grammaticos}, {\em Modeling
  tumor cell migration: from microscopic to macroscopic models}, Physical
  Review E, 79 (2009), p.~031917.

\bibitem{drasdo2005coarse}
{\sc D.~Drasdo}, {\em Coarse graining in simulated cell populations}, Advances
  in Complex Systems, 8 (2005), pp.~319--363.

\bibitem{drasdo2005single}
{\sc D.~Drasdo and S.~Hoehme}, {\em A single-cell-based model of tumor growth
  in vitro: monolayers and spheroids}, Physical Biology, 2 (2005), p.~133.

\bibitem{drasdo2012modeling}
{\sc D.~Drasdo and S.~Hoehme}, {\em Modeling the impact of granular embedding
  media, and pulling versus pushing cells on growing cell clones}, New Journal
  of Physics, 14 (2012), p.~055025.

\bibitem{DuLin2010}
{\sc Y.~Du and Z.~Lin}, {\em Spreding-vanishing dichotomy in the diffusive
  logistic model with a free boundary}, SIAM Journal on Mathematical Analysis
  (SIMA), 42 (2010), pp.~377--405.

\bibitem{Evans_1}
{\sc L.~Evans}, {\em A free boundary problem: The flow of two immiscible fluids
  in a one-dimensional porous medium {I}}, Indiana University Mathematics
  Journal, 26 (1977), pp.~915--932.

\bibitem{Evans_2}
{\sc L.~Evans}, {\em A free boundary problem: The flow of two immiscible fluids
  in a one-dimensional porous medium {II}}, Indiana University Mathematics
  Journal, 27 (1978), pp.~93--111.

\bibitem{fournier2004microscopic}
{\sc N.~Fournier, S.~M{\'e}l{\'e}ard, et~al.}, {\em A microscopic probabilistic
  description of a locally regulated population and macroscopic
  approximations}, The Annals of Applied Probability, 14 (2004),
  pp.~1880--1919.

\bibitem{fozard2009continuum}
{\sc J.~Fozard, H.~Byrne, O.~Jensen, and J.~King}, {\em Continuum
  approximations of individual-based models for epithelial monolayers},
  Mathematical Medicine and Biology, 27 (2009), pp.~39--74.

\bibitem{greenspan1976growth}
{\sc H.~Greenspan}, {\em On the growth and stability of cell cultures and solid
  tumors}, Journal of Theoretical Biology, 56 (1976), pp.~229--242.

\bibitem{johnson1971surface}
{\sc K.~Johnson, K.~Kendall, and A.~Roberts}, {\em Surface energy and the
  contact of elastic solids}, Proceedings of the Royal Society of London, 324
  (1971), pp.~301--313.

\bibitem{johnston2012mean}
{\sc S.~T. Johnston, M.~J. Simpson, and R.~E. Baker}, {\em Mean-field
  descriptions of collective migration with strong adhesion}, Physical Review
  E, 85 (2012), p.~051922.

\bibitem{johnston2015modelling}
{\sc S.~T. Johnston, M.~J. Simpson, and R.~E. Baker}, {\em Modelling the
  movement of interacting cell populations: a moment dynamics approach},
  Journal of Theoretical Biology, 370 (2015), pp.~81--92.

\bibitem{kim2016free}
{\sc I.~C. Kim, B.~Perthame, and P.~E. Souganidis}, {\em Free boundary problems
  for tumor growth: a viscosity solutions approach}, Nonlinear Analysis, 138
  (2016), pp.~207--228.

\bibitem{lorenzi2017interfaces}
{\sc T.~Lorenzi, A.~Lorz, and B.~Perthame}, {\em On interfaces between cell
  populations with different mobilities}, Kinetic and Related Models, 10
  (2017), pp.~299--311.

\bibitem{lowengrub2009nonlinear}
{\sc J.~S. Lowengrub, H.~B. Frieboes, F.~Jin, Y.-L. Chuang, X.~Li, P.~Macklin,
  S.~M. Wise, and V.~Cristini}, {\em Nonlinear modelling of cancer: bridging
  the gap between cells and tumours}, Nonlinearity, 23 (2009), p.~R1.

\bibitem{mellet2017hele}
{\sc A.~Mellet, B.~Perthame, and F.~Quiros}, {\em A hele--shaw problem for
  tumor growth}, Journal of Functional Analysis, 273 (2017), pp.~3061--3093.

\bibitem{middleton2014continuum}
{\sc A.~M. Middleton, C.~Fleck, and R.~Grima}, {\em A continuum approximation
  to an off-lattice individual-cell based model of cell migration and
  adhesion}, Journal of Theoretical Biology, 359 (2014), pp.~220--232.

\bibitem{motsch2018short}
{\sc S.~Motsch and D.~Peurichard}, {\em From short-range repulsion to hele-shaw
  problem in a model of tumor growth}, Journal of Mathematical Biology, 76
  (2018), pp.~205--234.

\bibitem{Murray}
{\sc Murray}, {\em Classifying general nonlinear force laws in cell-based
  models via the continuum limit}, Physical Review, 85 (2012), p.~021921.

\bibitem{murray2009discrete}
{\sc P.~J. Murray, C.~M. Edwards, M.~J. Tindall, and P.~K. Maini}, {\em From a
  discrete to a continuum model of cell dynamics in one dimension}, Physical
  Review E, 80 (2009), p.~031912.

\bibitem{oelschlager1989derivation}
{\sc K.~Oelschl{\"a}ger}, {\em On the derivation of reaction-diffusion
  equations as limit dynamics of systems of moderately interacting stochastic
  processes}, Probability Theory and Related Fields, 82 (1989), pp.~565--586.

\bibitem{oelschlager1990large}
{\sc K.~Oelschl{\"a}ger}, {\em Large systems of interacting particles and the
  porous medium equation}, Journal of Differential Equations, 88 (1990),
  pp.~294--346.

\bibitem{perthame2014some}
{\sc B.~Perthame}, {\em Some mathematical aspects of tumor growth and therapy},
  in ICM 2014-International Congress of Mathematicians, 2014.

\bibitem{perthame2014derivation}
{\sc B.~Perthame, F.~Quir{\'o}s, M.~Tang, and N.~Vauchelet}, {\em Derivation of
  a hele--shaw type system from a cell model with active motion}, Interfaces
  and Free Boundaries, 16 (2014), pp.~489--508.

\bibitem{perthame2014hele}
{\sc B.~Perthame, F.~Quir{\'o}s, and J.~L. V{\'a}zquez}, {\em The hele--shaw
  asymptotics for mechanical models of tumor growth}, Archive for Rational
  Mechanics and Analysis, 212 (2014), pp.~93--127.

\bibitem{perthame2014traveling}
{\sc B.~Perthame, M.~Tang, and N.~Vauchelet}, {\em Traveling wave solution of
  the hele--shaw model of tumor growth with nutrient}, Mathematical Models and
  Methods in Applied Sciences, 24 (2014), pp.~2601--2626.

\bibitem{preziosi2003cancer}
{\sc L.~Preziosi}, {\em Cancer modelling and simulation}, CRC Press, 2003.

\bibitem{Schauder}
{\sc J.~Schauder}, {\em Der fixpunktsatz in functionalra\"umen}, Studia
  Mathematica, 2 (1930), pp.~171--180.

\bibitem{sherratt2001new}
{\sc J.~A. Sherratt and M.~A.~J. Chaplain}, {\em A new mathematical model for
  avascular tumour growth}, Journal of Mathematical Biology, 43 (2001),
  pp.~291--312.

\bibitem{simpson2007simulating}
{\sc M.~J. Simpson, A.~Merrifield, K.~A. Landman, and B.~D. Hughes}, {\em
  Simulating invasion with cellular automata: connecting cell-scale and
  population-scale properties}, Physical Review E, 76 (2007), p.~021918.

\bibitem{tang2014composite}
{\sc M.~Tang, N.~Vauchelet, I.~Cheddadi, I.~Vignon-Clementel, D.~Drasdo, and
  B.~Perthame}, {\em Composite waves for a cell population system modeling
  tumor growth and invasion}, in Partial Differential Equations: Theory,
  Control and Approximation, Springer, 2014, pp.~401--429.

\bibitem{van2015simulating}
{\sc P.~Van~Liedekerke, M.~Palm, N.~Jagiella, and D.~Drasdo}, {\em Simulating
  tissue mechanics with agent-based models: concepts, perspectives and some
  novel results}, Computational Particle Mechanics, 2 (2015), pp.~401--444.

\bibitem{Vazquez}
{\sc J.~Vazquez}, {\em The Porous Medium Equation. Mathematical Theory}, Oxford
  Science Publication, 2006.

\bibitem{Visintin}
{\sc A.~Visintin}, {\em Introduction to Stefan-Type Problems in `Handbook of
  Differential Equations: Evolutionary Equations', Eds.\ C.M.~Dafermos,
  M.~Pokorn\'y}, North-Holland, Amsterdam, 2008.

\bibitem{ward1999mathematical}
{\sc J.~Ward and J.~King}, {\em Mathematical modelling of avascular-tumour
  growth ii: modelling growth saturation}, Mathematical Medicine and Biology,
  16 (1999), pp.~171--211.

\bibitem{ward1997mathematical}
{\sc J.~P. Ward and J.~King}, {\em Mathematical modelling of avascular-tumour
  growth}, Mathematical Medicine and Biology, 14 (1997), pp.~39--69.

\end{thebibliography}

\appendix

\section{Approximate representation of the JKR force law} \label{JKR_approx}
The nonlinear function $F^{JKR}(d_{ij})$ that gives the JKR force law between the $i^{th}$ cell and the $j^{th}$ cell, with centres at distance $d_{ij}$, is implicitly defined by the following formulas \cite{drasdo2012modeling}
\beq
\label{eq:JKRimplicit}
\begin{aligned} 
&\delta_{ij} = \frac{a^2_{ij}} {R_{ij}} - \sqrt{\frac{2 \pi \gamma a_{ij}}{E_{ij}}}, \\
 & a^3_{ij} = a^3_{ij}\left(F^{JKR}_{ij}\right) = \frac {3 R_{ij}}{ 4 E_{ij}}\left[F^{JKR}_{ij} + 3 \pi \gamma R_{ij} + \sqrt{ 6 \pi \gamma R_{ij} F^{JKR}_{ij} + ( 3 \pi \gamma R_{ij})^2}\right],
\end{aligned}
\eeq
where
$$
R_{ij}^{-1}= R_i^{-1} + R_j^{-1}, \quad d_{ij} = R_i + R_j - \delta_{ij} \quad \text{and} \quad  E_{ij}^{-1}  = (1- \nu_i^2)E_i^{-1}+ (1-\nu_j^2) E_j^{-1}.
$$
In the formulas \eqref{eq:JKRimplicit}, the parameter $\gamma$ models the strength of cell-cell adhesion, $R_{i}$ stands for the radius of the $i^{th}$ cell, $E_{i}$ is the Young's modulus of the $i^{th}$ cell, and $\nu_{i}$ denotes the Poisson's ratio of the $i^{th}$ cell. Analogous considerations hold for the parameters of the $j^{th}$ cell. Moreover, $\delta_{ij}$ is the sum of the deformations undergone by the $i^{th}$ cell and the $j^{th}$ cell.

As the computational cost of numerical simulations carried out by solving implicitly for $F^{JKR}(d_{ij})$ is prohibitive, we derive an approximate representation of this function based on a third degree polynomial of the form
\begin{align}\label{FJKRApprox}
F^{JKR}(d_{ij}) \approx F(d_{ij}) = a^{ij}_{3}(d_{ij} -d^{\rm eq}_{ij})^3+a^{ij}_{2}(d_{ij}- d^{\rm eq}_{ij})^2+a^{ij}_{1}(d_{ij}-d^{\rm eq}_{ij}), 
\end{align}
with
\beq
\label{e.a123app}
a_{1}^{ij} =  F'(d^{\rm eq}_{ij}) \; d^{\rm eq}_{ij}, \quad a_{2}^{ij} = \frac 12 F''(d^{\rm eq}_{ij}) \; (d^{\rm eq}_{ij})^2, \quad a_{3}^{ij} = \frac 16  F'''(d^{\rm eq}_{ij}) \; ( d^{\rm eq}_{ij})^3.
\eeq
In the above equations, $d^{\rm eq}_{ij}$ denotes the equilibrium distance between the centres of cell $i$ and cell $j$ (\emph{i.e.} the distance $d_{ij}$ such that $F(d_{ij}) =0$ for all $d_{ij} \geq d^{\rm eq}_{ij}$). 

With this goal in mind, we look for explicit expressions of $d^{\rm eq}_{ij}$, $F'(d^{\rm eq}_{ij})$, $F''(d^{\rm eq}_{ij})$ and $F'''(d^{\rm eq}_{ij})$ in terms of the cell radii and the mechanical parameters of the cells. In the rest of this appendix we will use the abridged notation $F_{ij}$ for $F(d_{ij})$.

\paragraph{Expression for $d^{\rm eq}_{ij}$.} The equilibrium distance $d^{\rm eq}_{ij}$ can be directly computed from the formulas~\eqref{eq:JKRimplicit}. In fact, choosing $d_{ij} = d^{\rm eq}_{ij}$ and using the fact that $F(d^{\rm eq}_{ij})=0$, from the second formula in  \eqref{eq:JKRimplicit} we obtain
$$
a^3_{ij}(0) = \frac{9 \pi \gamma R_{ij}^2}{2 E_{ij}}.  
$$
Substituting this expression into the first formula in \eqref{eq:JKRimplicit} yields
$$
\begin{aligned} 
\delta_{ij}^{\rm eq}= \frac{R^{1/3}_{ij} (9\pi\gamma)^{2/3}}{2^{2/3}E_{ij}^{2/3}} - \frac{3^{1/3}2^{1/3}(\pi\gamma)^{2/3} R^{1/3}_{ij}}{E_{ij}^{2/3}}
=\frac 12 \frac{(\pi\gamma)^{2/3} (6R_{ij})^{1/3}}{E_{ij}^{2/3}}
\end{aligned} 
$$
and noting that $d^{\rm eq}_{ij} = R_i + R_j - \delta^{\rm eq}_{ij}$ we find the equilibrium distance $d^{\rm eq}_{ij}$.

\paragraph{Expression for $F'(d^{\rm eq}_{ij})$.} We substitute the second formula in  \eqref{eq:JKRimplicit} into the first formula to obtain
\beq
\label{eq:RipRj-dij}
R_i+R_j - d_{ij} = \frac 1{R_{ij}}\left( \frac{3R_{ij}}{4E_{ij}}\right)^{2/3} f(F_{ij})^{2/3} - \left(\frac{2\pi \gamma}{E_{ij}}\right)^{1/2} \left( \frac{3R_{ij}}{4E_{ij}}\right)^{1/6}
f(F_{ij})^{1/6}, 
\eeq
where 
$$
f(F_{ij}) = F_{ij} + \alpha +\sqrt{2\alpha} \sqrt{ F_{ij} + \alpha/2} \;  \text{ with } \;  \alpha = 3\pi \gamma R.
$$
Differentiating $f$ with respect to $F_{ij}$ yields 
$$
f'(F_{ij}) = 1 + \frac {\sqrt{2\alpha}} 2  \frac 1{\sqrt{F_{ij} + \alpha/2}} , \quad f''(F_{ij})  =  -  \frac {\sqrt{2\alpha}} 4  \frac 1{(F_{ij} + \alpha/2)^{3/2}}
$$
and
$$
f'''(F_{ij}) =    \frac {3\sqrt{2\alpha}} 8  \frac 1{(F_{ij} + \alpha/2)^{5/2}}.
$$
Hence, for $d_{ij} = d^{\rm eq}_{ij}$ we have
\beq
\label{eq:f0}
 f(0) =  2 \alpha= 6\pi \gamma R, \; f'(0) = 2, \; f''(0) = - \frac 1 \alpha = - \frac 1{3\pi \gamma R_{ij}},  \; f'''(0) =  \frac 3{ \alpha^2} = \frac 3{(3\pi\gamma R_{ij})^2}.
\eeq
Differentiating both sides of~\eqref{eq:RipRj-dij} with respect to $d_{ij}$ we find
$$
-1 =\left [ A\frac 2 3  (f(F_{ij}))^{-1/3}  - \frac B 6 (f(F_{ij}))^{-5/6}\right] f'(F_{ij}) F'_{ij}, 
$$
where 
$$
A = \frac 1{R}\left( \frac{3R_{ij}}{4E_{ij}}\right)^{2/3}= \frac 32  \frac 1{(6R_{ij})^{1/3} E_{ij}^{2/3}}, \quad B = \left(\frac{2\pi \gamma}{E_{ij}}\right)^{1/2} \left( \frac{3R_{ij}}{4E_{ij}}\right)^{1/6} = \frac{  (6R_{ij})^{1/6} (\pi \gamma)^{1/2}}{E_{ij}^{2/3}}.
$$
Rearranging terms in the latter equation yields
$$
F'_{ij} = - \left(f'(F_{ij}) \right)^{-1} \left [ A\frac 2 3  (f(F_{ij}))^{-1/3}  - \frac B 6 (f(F_{ij}))^{-5/6}\right] ^{-1}
$$
and, therefore, for $d_{ij} = d^{\rm eq}_{ij}$ we have
$$
F'(d^{\rm eq}_{ij}) = - \left(f'(0) \right)^{-1} \left [ A\frac 2 3  (f(0))^{-1/3}  - \frac B 6 (f(0))^{-5/6}\right] ^{-1}.
$$
Finally, noting that 
$$
- \frac{1}{2} \left [ A\frac 2 3  f(0)^{-1/3}  - \frac B 6 f(0)^{-5/6}\right]^{-1} =  \frac 6{5}  (6R_{ij} E_{ij})^{2/3}( \pi \gamma)^{1/3}
$$
we find
\beq
\label{eq:Ffirst}
F'(d^{\rm eq}_{ij}) = - \frac 3{5}  (6R_{ij} E_{ij})^{2/3}.
\eeq

\paragraph{Expression for $F''(d^{\rm eq}_{ij})$.} Differentiating twice both sides of~\eqref{eq:RipRj-dij} with respect to $d_{ij}$ yields
\begin{eqnarray*} 
0 &=& \left [ A\frac 2 3  (f(F_{ij}))^{-1/3}  - \frac B 6 (f(F_{ij}))^{-5/6}\right] f''(F_{ij}) \Big(\left(F'_{ij}\right)^2 + 
 f'(F_{ij}) F''_{ij} \Big)\\
&& + \left [ - A\frac 2 9  (f(F_{ij}))^{-4/3}  + \frac {5B} {36} (f(F_{ij}))^{-11/6}\right] 
\left( f'(F_{ij}) F'_{ij} \right)^2.
\end{eqnarray*} 
Rearranging terms in the latter equation gives 
$$
F''_{ij} = - \left(f'(F_{ij})\right)^{-1} f''(F_{ij}) \left(F'_{ij} \right)^2- \left(f'(F_{ij})\right)^2 \left(F'_{ij} \right)^3 \left [ A\frac 2 9  f(F_{ij})^{-4/3}  -  B\frac {5} {36} f(F_{ij})^{-11/6}\right].
$$
Hence, choosing $d_{ij} = d^{\rm eq}_{ij}$ and using the fact that $F(d^{\rm eq}_{ij})=0$ along with the expressions~\eqref{eq:f0} for $f'(0)$ and $f''(0)$, we find
\beq
\label{eq:Fsecond}
F''(d^{\rm eq}_{ij}) 
 = \frac 1{ 2\alpha} \left(F'(d^{\rm eq}_{ij}) \right)^2 - 4\left(F'(d^{\rm eq}_{ij})\right)^3\left [ A\frac 2 9  f(0)^{-4/3}  -  B\frac {5} {36} f(0)^{-11/6}\right].
\eeq
Noting that [cf. the expression of $f(0)$ in  \eqref{eq:f0}]
$$
 A\frac 2 9  f(0)^{-4/3}  - B \frac {5} {36} f(0)^{-11/6} = \frac 7{36}\frac 1{(6R_{ij})^{5/3} E_{ij}^{2/3} (\pi \gamma)^{4/3}} 
$$
and inserting the expression~\eqref{eq:Ffirst} of $F'(d^{\rm eq}_{ij})$ into~\eqref{eq:Fsecond} yields
$$
F''(d^{\rm eq}_{ij})   = 
\left(\frac 9{25} + 4\frac {27}{125}\frac {7}{36}\right) (6R_{ij}E_{ij})^{4/3}(\pi\gamma)^{2/3}\frac 1{6 R_{ij}\pi \gamma } = 
\frac{66}{125} \frac{E_{ij}^{4/3} (6R_{ij})^{1/3} }{(\pi \gamma)^{1/3}}.
$$

\paragraph{Expression for $F'''(d^{\rm eq}_{ij})$.} Differentiating thrice both sides of~\eqref{eq:RipRj-dij} with respect to $d_{ij}$ yields
\begin{eqnarray*} 
0 &=& \left [ A\frac 2 3  f(F_{ij})^{-1/3}  - \frac B 6 f(F_{ij})^{-5/6}\right] \left[f'''(F_{ij}) \left(F'_{ij} \right)^3 + 
3 f''(F_{ij}) F'_{ij} F''_{ij} + f'(F_{ij}) F'''_{ij} \right]\\
&& + \left [ - A\frac 2 9  f(F_{ij})^{-4/3}  + \frac {5B} {36} f(F_{ij})^{-11/6}\right] f'(F_{ij}) F'_{ij} \left[f''(F_{ij}) \left(F'_{ij}\right)^2 +
f'(F_{ij}) F''_{ij} \right] \\
&& +  \left [  A\frac {8}{27}  f(F_{ij})^{-7/3}  - B \frac {55} {216} f(F_{ij})^{-17/6}\right] 
\left(f'(F_{ij}) F'_{ij} \right)^3.
\end{eqnarray*} 
Rearranging terms in the latter equation we obtain 
\begin{eqnarray*} 
F'''_{ij} &=& \left\{3\left [  \frac {2A} 9  f(F_{ij})^{-\frac 43}  - \frac {5B} {36} f(F_{ij})^{-\frac{11}6}\right]   F'_{ij}
\left[f''(F_{ij}) \left(F'_{ij} \right)^2  + f'(F_{ij}) F''_{ij} \right]  \right.
\\  &&
\left.   -  \left [ \frac {8 A}{27}  f(F_{ij})^{-\frac 73}  -  \frac {55 B} {216} f(F_{ij})^{-\frac{17}6}\right] 
\left( f'(F_{ij}) \right)^2 \left(F'_{ij}\right)^3 \right\}\left [ A\frac 2 3  f(F_{ij})^{-\frac 13}  - \frac B 6 f(F_{ij})^{-\frac 56}\right] ^{-1}\\
&& -\left(f'''(F_{ij}) \left(F'_{ij} \right)^3  + 
3 f''(F_{ij}) F'_{ij} F''_{ij} \right)\left(f'(F_{ij}) \right)^{-1}.
\end{eqnarray*} 
Hence, choosing $d_{ij} = d^{\rm eq}_{ij}$ and using the fact that $F(d^{\rm eq}_{ij})=0$, along with the expression~\eqref{eq:Fsecond} for $F''(d^{\rm eq}_{ij})$ and the expressions~\eqref{eq:f0} for $f'(0)$, $f''(0)$ and $f'''(0)$, we find
\beq
\label{eq:Fthird}
F'''(d^{\rm eq}_{ij}) = 48 C_1^2 \left(F'(d^{\rm eq}_{ij}) \right)^5 + \left(8C_2 - \frac 6 \alpha C_1\right) \left(F'(d^{\rm eq}_{ij}) \right)^4 - \frac {3}{4 \alpha^2} \left(F'(d^{\rm eq}_{ij})\right)^3,
\eeq
where 
\begin{eqnarray*} 
C_1 &=& \left [  A\frac 2 9  f(0)^{-4/3}  - \frac {5B} {36} f(0)^{-11/6}\right]  =  \frac 7{36} \frac 1{ E_{ij}^{2/3} (6R_{ij})^{5/3} (\pi \gamma)^{4/3}}  , \\ 
C_2 &=&   A\frac {8}{27}  f(0)^{-7/3}  - B \frac {55} {216} f(0)^{-17/6}\\
&=& \frac 1 {27}  (6R_{ij}\pi \gamma)^{-7/3}\left[8  \frac 1{R_{ij}^{1/3} E_{ij}^{2/3}} \left(\frac 3 4\right)^{2/3}  
- \frac {55} 8   \frac{  (6R_{ij})^{1/6} (\pi \gamma)^{1/2}}{E_{ij}^{2/3}  (6R_{ij}\pi \gamma)^{1/2}}   \right] \\ 
&=&\frac{41}{216} \frac 1{  E_{ij}^{2/3} (6R_{ij})^{8/3}(\pi \gamma)^{7/3}}.
\end{eqnarray*}
Finally, inserting the expression~\eqref{eq:Ffirst} for $F'(d^{\rm eq}_{ij})$ into~\eqref{eq:Fthird} we obtain 
$$
F'''(d^{\rm eq}_{ij}) =  \frac{1254}{3125}  \frac{E_{ij}^2}{\pi \gamma}.
$$

\paragraph{Expressions for $d^{\rm eq}_{ij}$, $a^{ij}_{1}$, $a^{ij}_{2}$ and $a^{ij}_{3}$.} Taken together the results from above give
\beq
\label{e.deqFeq}
\begin{aligned}
& d^{\rm eq}_{ij} = R_i + R_j - \frac 12 \frac{(\pi\gamma)^{2/3} (6R_{ij})^{1/3}}{E_{ij}^{2/3}},\\
&a^{ij}_{1} = d^{\rm eq}_{ij} \, F'(d^{\rm eq}_{ij}) = - \frac 3{5}  (6R_{ij} E_{ij})^{2/3}  ( \pi \gamma)^{1/3} \, d^{\rm eq}_{ij}, \\
&a^{ij}_{2} = \frac 12 \, F''(d^{\rm eq}_{ij}) \; (d^{\rm eq}_{ij})^2 = \frac{33}{125} \frac{E_{ij}^{4/3} (6R_{ij})^{1/3} }{(\pi \gamma)^{1/3}} \, (d^{\rm eq}_{ij})^2,\\
&a^{ij}_{3} = \frac 16  \, F'''(d^{\rm eq}_{ij}) \; ( d^{\rm eq}_{ij})^3 =  \frac{209}{3125}  \frac{E_{ij}^2}{\pi \gamma} \, (d^{\rm eq}_{ij})^3.
\end{aligned}
\eeq

\paragraph{Approximate representation of $F^{JKR}(d_{ij})$ used to perform numerical simulations.} To perform numerical simulations, we assumed that all cells have the same radius $R$, Young's modulus $E$ and Poisson's ratio~$\nu$, \emph{i.e.}
$$
R_i=R, \;\; E_i=E \; \text{ and } \; \nu_i=\nu \; \text{ for all } i=1, \ldots, n.
$$
Under these assumptions, we have that
$$
R_{ij} = \frac{R}{2} \; \text{ and } \; E_{ij} = \tilde E =  \frac{E}{2 (1- \nu^2)}\; \; \text{ for all } i,j=1, \ldots, n,
$$
and the approximate representation of the JKR force law given by  \eqref{FJKRApprox} and \eqref{e.deqFeq} reads as
\beq
\label{e.FJKRl}
F^{JKR}(d_{ij}) \approx a_3(d_{ij} -d^{\rm eq})^3+a_2(d_{ij}- d^{\rm eq})^2+a_1(d_{ij}-d^{\rm eq}) \; \text{ for all } i,j=1, \ldots, n,
\eeq
with
\beq
\label{e.a123l}
\begin{aligned}
&d^{\rm eq} = 2R - \frac 12 \frac{(\pi\gamma)^{2/3} (3R)^{1/3}}{\tilde E^{2/3}}, \quad
&&a_1 = - \frac 3{5}  (3R \tilde E)^{2/3}  ( \pi \gamma)^{1/3} d^{\rm eq},  \\ 
&a_2 = \frac{33}{125} \frac{\tilde E^{4/3} (3R)^{1/3} }{(\pi \gamma)^{1/3}}( d^{\rm eq})^2,\quad
&&a_3 =  \frac{209}{3125}  \frac{\tilde E^2}{\pi \gamma}( d^{\rm eq})^3.
\end{aligned}
\eeq

\newpage
\section{Supplementary figure}
\renewcommand\thefigure{\thesection.\arabic{figure}}    
\setcounter{figure}{0}

\begin{figure}[h!]
\centering
\subfigure[]{\includegraphics[scale = 0.27]{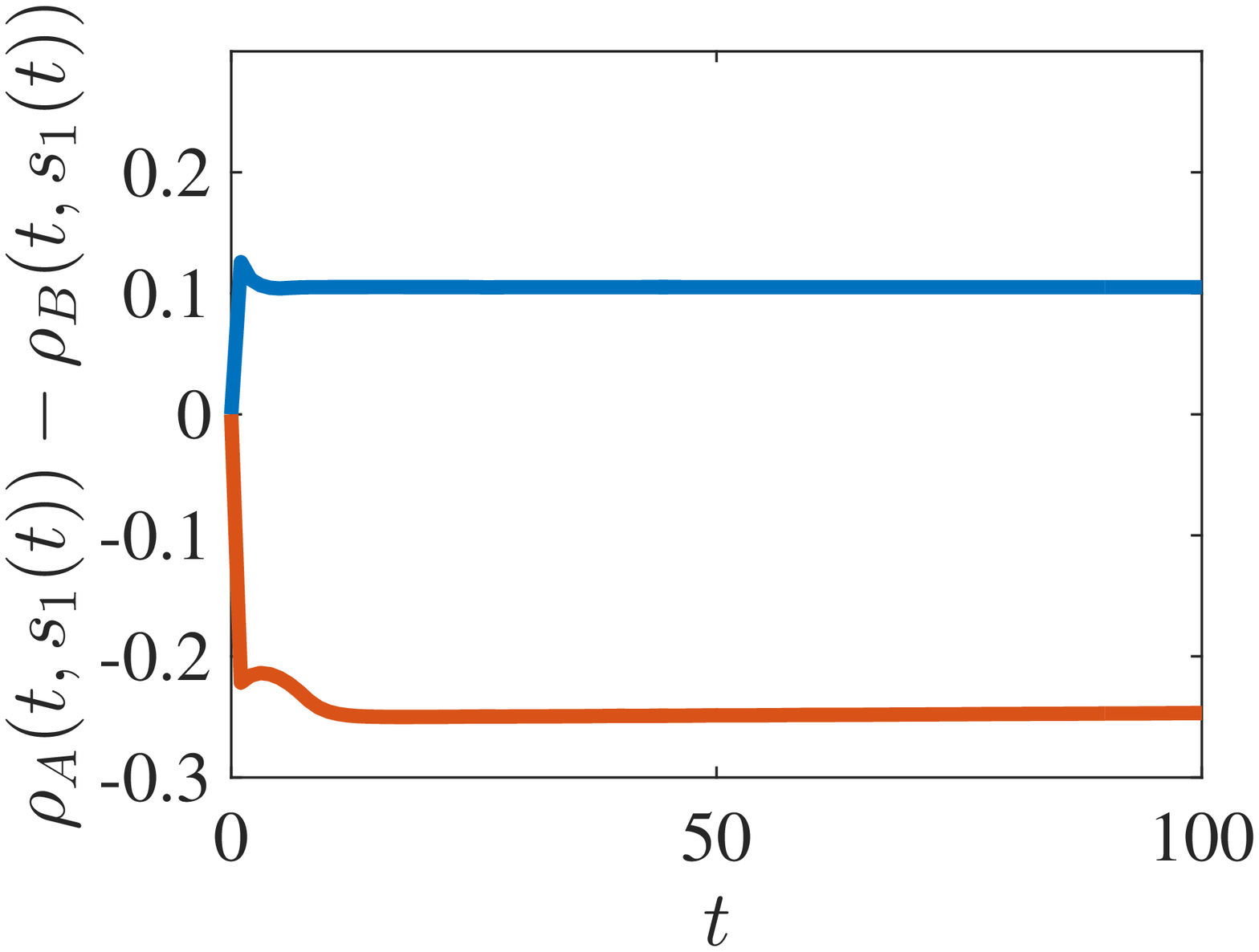}}  
\subfigure[]{\includegraphics[scale = 0.27]{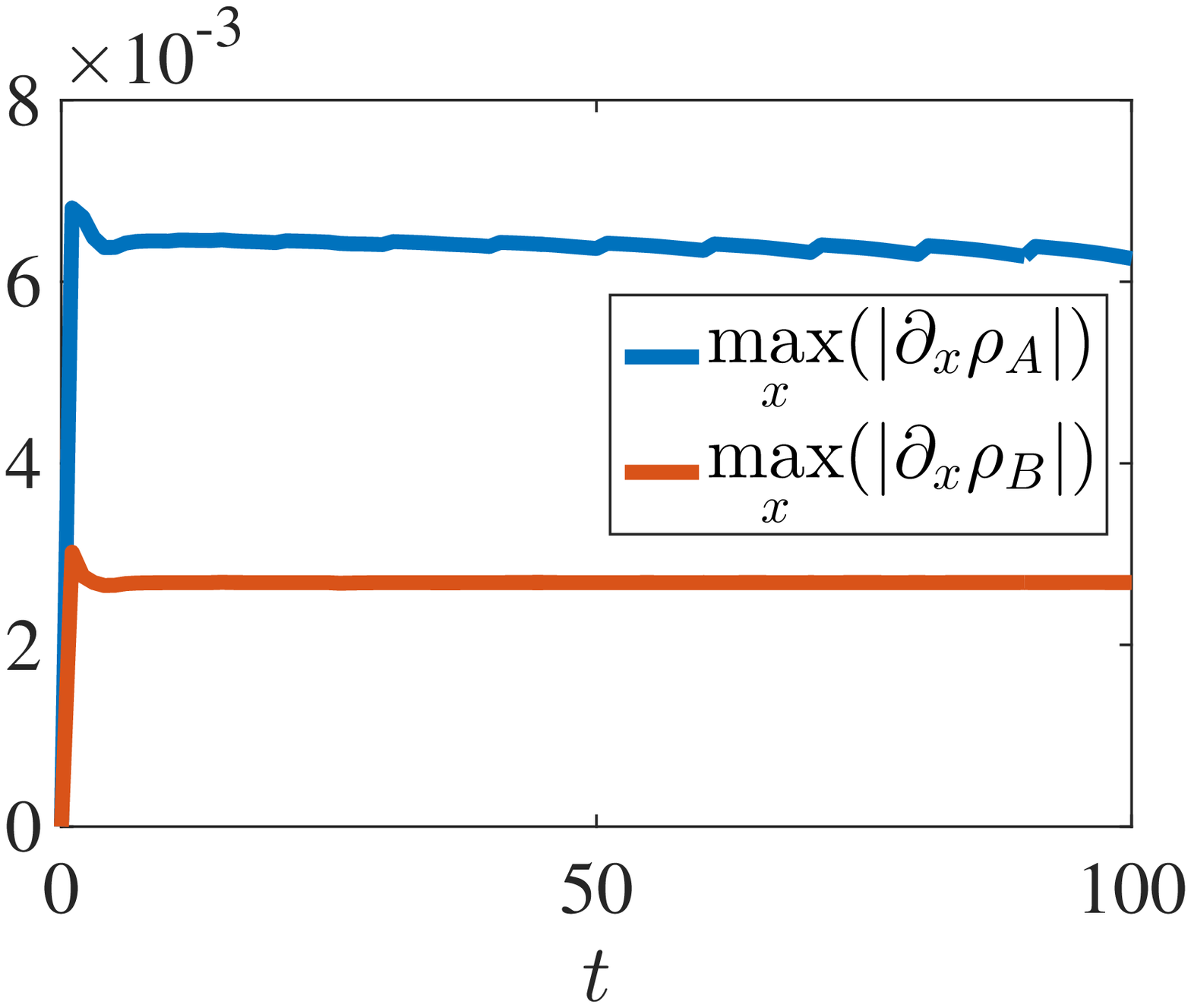}} 
\subfigure[]{\includegraphics[scale = 0.27]{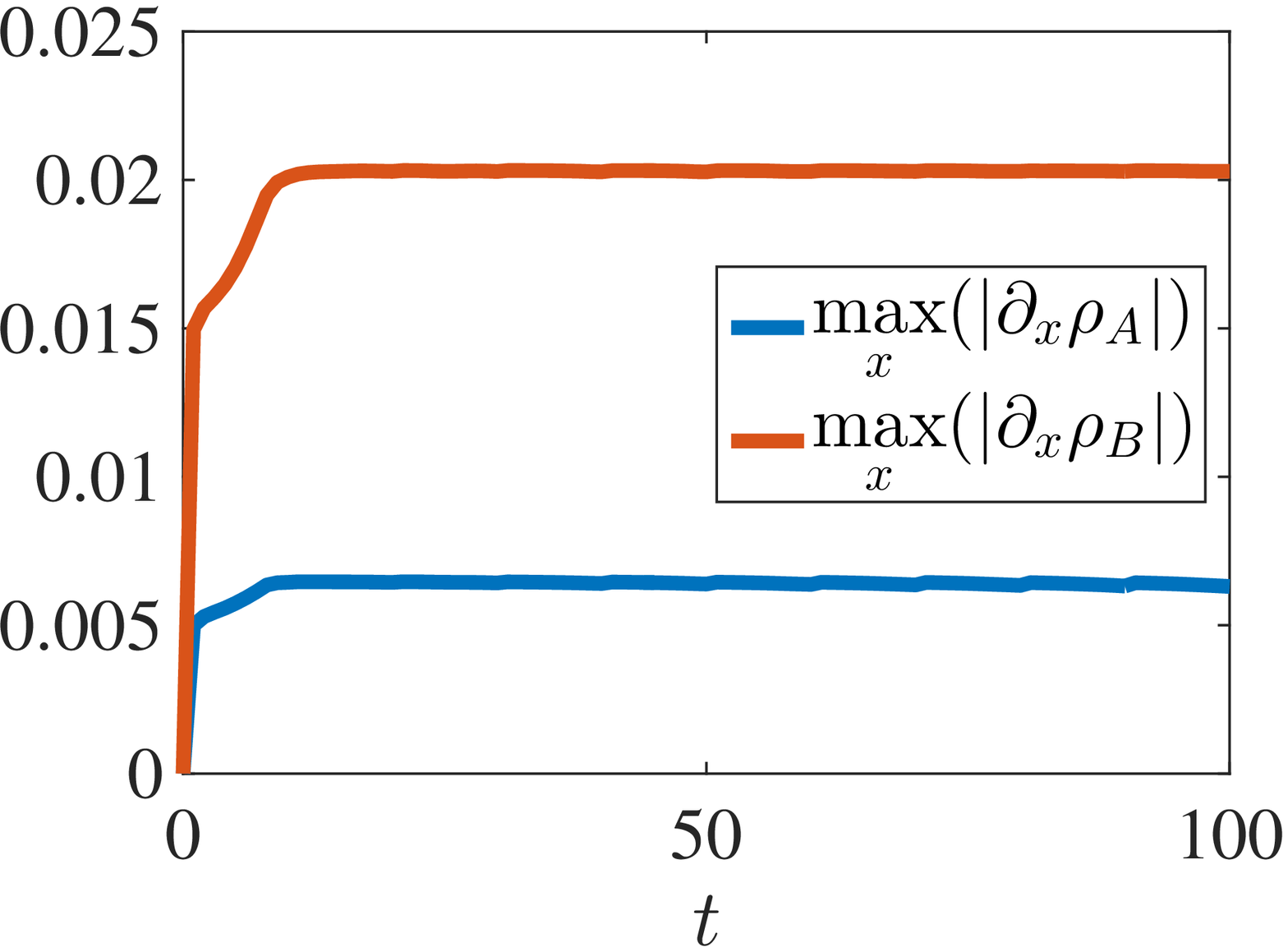}}
\caption{{\it (a)} Plot of $\rho_A(t,s_1(t)) - \rho_B(t,s_1(t))$ against $t$ in the case where $\eta_A<\eta_B$ (orange line), which corresponds to Figure \ref{LowerMotilityNonProlifTwoPopJKRModelDensity}, and in the case where $\eta_A>\eta_B$ (blue line), which corresponds to Figure \ref{HigherMotilityNonProlifTwoPopJKRModelDensity}. {\it (b)} Plot of $\displaystyle{\max_x(|\partial_x \rho_A|)}$ (blue) and $\displaystyle{\max_x(|\partial_x \rho_B|)}$ (orange) against $t$ in the case where $\eta_A<\eta_B$, which corresponds to Figure \ref{LowerMotilityNonProlifTwoPopJKRModelDensity}. {\it (c)} Plot of $\displaystyle{\max_x(|\partial_x \rho_A|)}$ (blue) and $\displaystyle{\max_x(|\partial_x \rho_B|)}$ (orange) against $t$ in the case where $\eta_A>\eta_B$, which corresponds to Figure \ref{HigherMotilityNonProlifTwoPopJKRModelDensity}. See Figures \ref{LowerMotilityNonProlifTwoPopJKRModelDensity} and \ref{HigherMotilityNonProlifTwoPopJKRModelDensity} for further details. }
 \label{Figure_5}
\end{figure}

\end{document}